\tikzset{
    >=stealth',
    pil/.style={
           ->,
           thick,
           shorten <=2pt,
           shorten >=2pt,}
}
\tikzset{->-/.style={decoration={
  markings,
  mark=at position .7 with {\arrow{>}}},postaction={decorate}}}
  \tikzset{a/.style={decoration={
  markings,
  mark=at position .52 with {\arrow{angle 90}}},postaction={decorate}}}
\tikzset{-<-/.style={decoration={
  markings,
  mark=at position .4 with {\arrow{<}}},postaction={decorate}}}
\newcounter{AppCounter}
\def\restrict#1{\raise-.5ex\hbox{\ensuremath|}_{#1}}
\newtheorem{lemma}{Lemma}[section]
\newtheorem{proposition}[lemma]{Proposition}
\newtheorem{remark-definition}[lemma]{Remark-Definition}
\newtheorem{theorem}[lemma]{Theorem}
\newtheorem{corollary}[lemma]{Corollary}
\newtheorem{proposition-conjecture}[lemma]{Proposition-conjecture}
\theoremstyle{definition}
\newtheorem{example}[lemma]{Example}
\newtheorem{definition}[lemma]{Definition}
\newtheorem{remark}[lemma]{Remark}
\newcommand{\proofend}{\hfill$\Box$\bigskip}
\newcommand{\R}{{\mathbb R}}
\newcommand{\G}{\mathcal G}
\newcommand{\B}{  B}
\newcommand{\metric}{\langle \,, \rangle}       
\renewcommand{\div}{\mathrm{div} \,}
\newcommand{\SDiff}{\mathrm{SDiff}}
\newcommand{\DSDiff}{\mathrm{DSDiff}}
\newcommand{\vect}{\Vect}
\newcommand{\dsvect}{\mathrm{DSVect}}
\newcommand{\mdiff}{\mathrm{MDiff}}
\newcommand{\mvect}{\mathrm{MVect}}
\newcommand{\mdens}{\mathrm{MDens}}
\newcommand{\gdiff}{\mathrm{GDiff}}
\newcommand{\gvect}{\mathrm{GVect}}
\newcommand{\gdens}{\mathrm{GDens}}
\newcommand{\Dens}{\mathrm{Dens}}
\newcommand{\bphi}{{\pmb{\phi}}}
\newcommand{\bu}{{\pmb{u}}}
\newcommand{\bv}{{\pmb{v}}}
\newcommand{\bF}{{\pmb{f}}}
\newcommand{\bg}{{\pmb{g}}}
\newcommand{\bmu}{{\pmb{\mu}}}
\newcommand{\bxi}{{\pmb{\xi}}}
\newcommand{\balpha}{{\pmb{\alpha}}}
\newcommand{\vol}{\mathrm{vol}_M}
\newcommand{\volA}{\mathrm{vol}_A}
\newcommand{\bpsi}{{\pmb{\psi}}}
\newcommand{\bff}{{\pmb{f}}}
\newcommand{\Ker}{\mathrm{Ker}}
\newcommand{\grad}[1]{\nabla #1}
\newcommand{\diff}[1]{{d}  #1}
\newcommand{\T}{{T}}
\newcommand{\Cont}{{C}}
\newcommand{\LieBracket}{ [\, , ] }
\newcommand{\g}{\mathfrak{g}}
\newcommand{\id}{\mathrm{id}}
\newcommand{\VS}{{\mathrm{VS}}}
\newcommand{\W}{\textnormal{Dens}} 
\newcommand{\Vect}{\mathrm{Vect}}
\newcommand{\Diff}{\textnormal{Diff}} 
\newcommand{\Sheet}{\Gamma}
\newcommand{\Src}{\mathrm{src}}
\newcommand{\Trg}{\mathrm{trg}}
\newcommand{\units}{{\id}}
\newcommand{\Dom}{D}
\newcommand{\chiplus}{\chi^+_\Sheet}
\newcommand{\chimin}{\chi_\Sheet^-}
\newcommand{\Dompm}{\Dom_\Sheet^\pm}
\renewcommand{\L}{\mathcal L}
\newcommand{\A}{\mathcal{A}}
\newcommand{\I}{\mathcal{I}}
\renewcommand{\H}{\mathcal{H}}
\newcommand{\F}{\mathcal{F}}
\renewcommand{\P}{\mathcal{P}}
\newcommand{\low}[1]{\raise-.0ex\hbox{$\scriptstyle #1$}}
\newcommand{\high}[1]{\raise.5ex\hbox{$\scriptstyle #1$}}
\renewcommand{\tfrac}{\frac}
\newcommand{\marginnote}[1]
{%
}
\newcounter{ai}
\newcounter{bk}
\title {Geometry of generalized fluid flows}
\author{Anton Izosimov\thanks{
Department of Mathematics,
University of Arizona;
e-mail: {\tt izosimov@math.arizona.edu}
} \,
and Boris Khesin\thanks{
Department of Mathematics,
University of Toronto;
e-mail: \tt{khesin@math.toronto.edu}
} }
\date{}
\begin{document}

\maketitle

\begin{abstract}
The Euler equation of an ideal (i.e. inviscid incompressible) fluid can be regarded, following V.Arnold, as  the geodesic flow of the right-invariant 
$L^2$-metric on the group of volume-preserving diffeomorphisms of the flow domain. 
In this paper we describe the common origin and symmetry of  generalized flows, multiphase fluids (homogenized vortex sheets), and conventional vortex sheets: 
they all correspond to geodesics on certain groupoids of multiphase diffeomorphisms. Furthermore, we prove that all these problems are Hamiltonian with respect to a Poisson structure on a dual Lie algebroid, generalizing the Hamiltonian property of the Euler equation on a Lie algebra dual. 
\end{abstract}

\tableofcontents

\section{Introduction} \label{intro}

Classical hydrodynamics deals with an ideal (i.e. inviscid incompressible) fluid, whose motion is described by the Euler equation. 
In this paper we consider a  broader setting of multiphase fluids and generalized flows. A {\it multiphase fluid} consists of several fractions that can freely penetrate through each other without resistance and are constrained only by the conservation of total density. Such flows arise, in particular,  in connection with vortex sheets in an ideal fluid, i.e. hypersurfaces of discontinuity in fluid velocity with different speed of fluid layers 
on different sides of the hypersurface. By relaxing the condition of a sharp border between the layers 
one obtains \textit{homogenized  vortex sheets} \cite{brenier}, which allow mixing 
of the two parts of the fluid, rather than separating them by a hypersurface. Such homogenized  vortex sheets can be thought of as examples of multiphase flows. Beyond the vortex sheet setting, multiphase fluids arise e.g. in plasma physics and chemistry. 

\begin{figure}[b]
\centering
\includegraphics[width=4.5in]{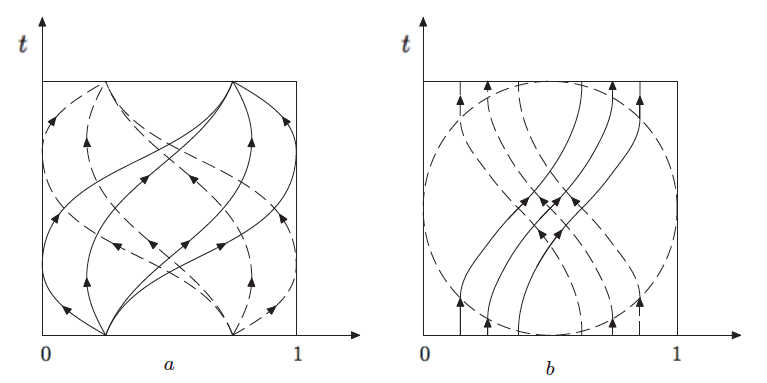}
\vspace{-7pt}
\caption{\small  Trajectories of particles in one-dimensional analogues of generalized flows corresponding to (a) continuum of phases for the flip of the
interval [0, 1] and (b) a multiphase flow with two phases for the interval-exchange map $[0,1/2]\leftrightarrow [1/2,1]$; see \cite{AK}.}
\vspace{-10pt}
\label{Fig.0}
\end{figure}

Of particular interest are multiphase fluids with continuum of phases (or \textit{generalized flows}), introduced by Y.\,Brenier \cite{brenier2}. One can think of them as flows in which every fluid particle 
spreads into a cloud thus moving to any other point of the manifold with certain probability (we define this precisely below), see Figure \ref{Fig.0}. While, according to A.\,Shnirelman \cite{Sh}, a shortest curve on the group of volume-preserving diffeomorphisms does not exist between some pairs of maps, generalized flows of Brenier do allow such a shortest solution for a large class of diffeomorphisms.

In this paper we describe the common origin and symmetry of both multiphase fluids (equivalently, homogenized vortex sheets) and generalized flows (fluids with continuum of phases): they both correspond to geodesics on certain groupoids of multiphase diffeomorphisms. Groupoids can be thought of as groups with partially defined multiplication. We also present the Hamiltonian framework for them by describing the 
corresponding dynamics as Euler-Arnold flows  for right-invariant energy metrics on the groupoid. In other words, we prove that generalized flows are Hamiltonian for the corresponding  Poisson structure on the dual Lie algebroid, generalizing Lie-Poisson structures.

Recall that in  1966 Arnold  proved that the Euler equation for an ideal fluid describes the geodesic flow of a right-invariant metric on the group of volume-preserving 
diffeomorphisms of  the flow domain \cite{Arn66}.
This insight turned out to be indispensable for the study of geometry and topology of fluid flows, Hamiltonian properties
and conservation laws in hydrodynamics, 
as well as a powerful tool for obtaining sharper existence and uniqueness results for Euler-type equations~\cite{AK}. 
However, such objects as the above-mentioned multiphase fluids or generalized flows do not fit  into Arnold's approach.
In the  paper \cite{IK} on classical vortex sheets in incompressible flows we introduced the language of Lie groupoids in hydrodynamics. 
In the present paper we demonstrate its universality by extending  Arnold's framework to other Lie groupoids with one-sided invariant metrics, thus treating generalized flows
 (which did not allow any group interpretation before) and vortex sheets on the same footing, as well as developing
a  groupoid-theoretic description for many fluid dynamical settings.

\subsection{Groupoid framework for generalized flows}

Recall that  the hydrodynamical Euler equation for an   incompressible fluid filling a closed compact Riemannian manifold $M$
 is the following evolution law on the velocity field $u$:
\begin{equation}
\partial_t u+\nabla_u u=-\nabla p\,,
\end{equation}
supplemented by the divergence-free condition ${\rm div}\, u=0$ on $M$. The pressure function $p$ is defined uniquely modulo an additive constant by those conditions. This setting also extends to manifolds with boundary, as well as non-compact manifolds (such as $\R^n$), by imposing appropriate boundary or decay conditions.
Arnold's theorem sheds light on the origin of this equation:

\begin{theorem}[Arnold \cite{Arn66}]
The Euler equation can be regarded as an equation of the geodesic flow on the group $\SDiff(M)$ 
of volume-preserving diffeomorphisms of $M$ with respect to the right-invariant 
 metric given at the identity of the group by the squared $L^2$-norm of the fluid's velocity field (i.e., the fluid kinetic energy\footnote{The 
$L^2$-metric is twice the kinetic energy of the fluid, which leads to a simple time rescaling, and we will not be mentioning this throughout 
the paper.}).
\end{theorem}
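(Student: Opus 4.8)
The plan is to identify the kinetic-energy metric as a right-invariant (weak) Riemannian metric on the Lie group $G=\SDiff(M)$, whose Lie algebra is $\g=\SVect(M)$, the divergence-free vector fields on $M$ (tangent to $\partial M$ when the boundary is nonempty), and then to reduce the geodesic equation of this metric to $\g$ by the Euler--Arnold procedure. Concretely, a path $g(t)\in G$ is the flow of the time-dependent field $u(t):=\dot g(t)\circ g(t)^{-1}\in\g$; right-invariance means the length element is $\norm{\dot g(t)}^2=\int_M|u(t)|^2\,\vol$, so the energy functional along the path is $E[g]=\tfrac12\int\!\!\int_M|u(t)|^2\,\vol\,dt$ and geodesics are its critical points among paths with fixed endpoints.

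First I would carry out the Lagrangian/variational computation. For a variation $g_s(t)\in G$ with $\delta g:=\partial_s g|_{s=0}$ vanishing at the endpoints, the symmetry of the Levi-Civita connection of $M$ and an integration by parts in $t$ give $\tfrac{d}{ds}E|_{s=0}=-\int\!\!\int_M\bigl\langle\tfrac{D}{dt}\dot g,\delta g\bigr\rangle\vol\,dt$. Admissible variations are exactly those with $\delta g\circ g^{-1}=w\in\g$, so criticality says $\bigl(\tfrac{D}{dt}\dot g\bigr)\circ g^{-1}$ is $L^2$-orthogonal to $\g$; by the Hodge decomposition $\Vect(M)=\SVect(M)\oplus\nabla C^\infty(M)$ into $L^2$-orthogonal summands, this means $\bigl(\tfrac{D}{dt}\dot g\bigr)\circ g^{-1}=-\nabla p$ for some function $p$. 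Finally, the chain rule for covariant derivatives along $g(t)$ yields $\bigl(\tfrac{D}{dt}\dot g\bigr)\circ g^{-1}=\partial_t u+\nabla_u u$, so the geodesic equation becomes $\partial_t u+\nabla_u u=-\nabla p$ with $\div u=0$ --- the Euler equation, with $p$ its pressure.

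Equivalently, and more structurally, one can invoke the general Euler--Arnold equation $\partial_t m=-\ad_u^* m$ for a right-invariant metric with inertia operator $A\colon\g\to\g^*$, $m=Au$. Here the natural model of $\g^*$ is the regular dual $\Omega^1(M)/dC^\infty(M)$ with pairing $\langle[\alpha],v\rangle=\int_M\alpha(v)\,\vol$; under it $A$ sends $u$ to the coset $[u^\flat]$ and $\ad_u^*[\alpha]=[\L_u\alpha]$ (with the sign convention making the equation valid for right-invariance). Cartan's formula $\L_u u^\flat=\iota_u du^\flat+d\iota_u u^\flat$ together with the identity $\iota_u du^\flat=(\nabla_u u)^\flat-\tfrac12 d|u|^2$ gives $\L_u u^\flat=(\nabla_u u)^\flat+\tfrac12 d|u|^2\equiv(\nabla_u u)^\flat\pmod{dC^\infty(M)}$, so $\partial_t[u^\flat]=-[(\nabla_u u)^\flat]$, i.e. $\partial_t u+\nabla_u u=-\nabla p$ once more, $p$ being the representative of the exact part.

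The main difficulty is analytic, not algebraic: $\SDiff(M)$ is only a Fréchet Lie group carrying a weak $L^2$-metric, so the existence of a Levi-Civita connection, the smoothness of the geodesic spray, and the legitimacy of the variational manipulations are not automatic. Following Ebin--Marsden one works instead on the Sobolev completions $\SDiff^s(M)$ with $s>\dim M/2+1$, which are smooth Hilbert manifolds on which the weak metric does admit a smooth spray; the identities above are then genuine there, and smoothness of solutions is recovered afterward. A minor point requiring care is the verification that $\ad_u^*$ descends to the quotient $\Omega^1(M)/dC^\infty(M)$ and that the projection producing $\nabla p$ is the $L^2$-orthogonal (Leray--Hodge) one, so that the two derivations agree; and for $M$ with boundary or at infinity one checks that the relevant integrations by parts still hold under the imposed boundary or decay conditions.
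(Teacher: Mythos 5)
Your proof is correct; note, though, that the paper does not prove this statement itself --- it is quoted as Arnold's theorem from \cite{Arn66}, so there is no internal proof to compare against line by line. Both of your derivations are the standard ones and both are sound: the identity $\iota_u du^\flat=(\nabla_u u)^\flat-\tfrac12 d|u|^2$ checks out against the formula $(\L_u u^\flat-\tfrac12 d(u,u))^\sharp=\nabla_u u$ that the paper itself uses in the proof of Theorem \ref{thmMain2}, and the Leray--Hodge projection argument is the right way to produce the pressure. It is worth pointing out that your second, ``structural'' route --- working on the regular dual $\Omega^1(M)/dC^\infty(M)$ with $m=[u^\flat]$ and $\ad_u^*[\alpha]=[\L_u\alpha]$ --- is precisely the $n=1$ specialization of what the paper does for multiphase fluids: Theorem \ref{thmMain} derives $\partial_t[\balpha]+[i_\bu d\balpha+\tfrac12 d\,i_\bu\balpha]=0$ on the algebroid dual $\Omega^1(M)^n/\delta(dC^\infty(M))$, and the remark following it observes that for a single phase this collapses to $\partial_t[\alpha]+\L_u[\alpha]=0$, i.e.\ your computation. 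Your first (variational) route is the more classical Ebin--Marsden argument and buys a cleaner identification of the pressure as the Lagrange multiplier for the volume constraint; the Lie--Poisson route is the one that generalizes to the groupoid setting of the paper, where no group structure is available. Your closing caveat about weak metrics and Sobolev completions is exactly the analytic point the paper defers to \cite{EbMars70}.
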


This theorem provides an attractive way to construct Euler solutions as shortest curves, i.e. geodesics, joining two volume-preserving diffeomorphisms of $M$. However, in \cite{Sh} Shnirelman proved that not all pairs of such diffeomorphisms admit a
shortest curve connecting them. This variational problem was ``cured'' by Brenier \cite{brenier2}, who introduced the space of generalized fluid flows and proved the existence in that space of a shortest curve joining any two volume-preserving diffeomorphisms from a large class.

Generalized flows satisfy the following equations:
{
\begin{subnumcases}{\label{eq:gf}}
\partial_t (\rho_a u_a) + \div(\rho_a u_a \otimes u_a) + \rho_a \grad p = 0\,,\label{eq:gf1} \\
\partial_t \rho_a + \div(\rho_a u_a ) = 0\,, \label{eq:gf2} 
\end{subnumcases}
along with the constraint $\int_A \rho_a \,da = 1$.}
Here $u_a \in \Vect(M)$ is the fluid velocity field, depending on an additional parameter $a$ belonging to a certain measure space $A$. One can think of $A$ as enumerating fractions of the fluid, with $u_a$ being the velocity of a particular fraction. Likewise $\rho_a \in C^\infty(M)$ is the mass density of the fraction with label $a \in A$. The pressure function $p \in \Cont^\infty(M)$ is common for all fractions.

{

\begin{remark}
Using \eqref{eq:gf2} one can rewrite \eqref{eq:gf1}  in the form similar to the classical Euler equation:
 \begin{equation}\label{eq:gfeuler}
 \partial_t u_a + \nabla_{u_a}u_a = -\grad p.
 \end{equation}
 The above form is given for consistency with \cite{brenier2}, and it also simplifies the derivation of equation for the pressure function.

{
 Namely, the pressure can be obtained} 
from the velocities $u_a$ and densities $\rho_a$ as follows. Integrating \eqref{eq:gf2} over the space $A$ we get the condition \begin{equation}\label{eq:nde}\div(\int_A \rho_a u_a da) = 0,\end{equation} which can be thought of as an analog of the condition $\div u = 0$ for the classical Euler equation.  Further, taking the divergence of\eqref{eq:gf1}, integrating over $A$, and using \eqref{eq:nde} we get
\begin{equation}\label{eq:pe}
\Delta p = -\div \int_A  \div(\rho_a u_a \otimes u_a) \,da,
\end{equation}
which is a Poisson equation and hence has a unique solution for the pressure function, up to an additive constant.
\end{remark}
}

\begin{theorem}[= Theorem \ref{thmMain4}]\label{thmIntro1}
The Euler equations \eqref{eq:gf} for a generalized flow are geodesic
equations for the right-invariant $L^2$-metric on (source fibers of) the Lie groupoid $\gdiff(M)$ 
of generalized diffeomorphisms. Equivalently, the Euler equations \eqref{eq:gf} are  the groupoid  Euler-Arnold equations  corresponding to the 
$L^2$-metric on the algebroid $\gvect(M)$. \end{theorem}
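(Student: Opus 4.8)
The plan is to obtain the theorem as an instance of the general groupoid Euler--Arnold correspondence developed earlier in the paper, by spelling out all of its ingredients for the groupoid $\gdiff(M)$ and then checking that the resulting equations are exactly the system \eqref{eq:gf}; it is convenient to run the geodesic computation directly on a source fiber and afterwards record the algebroid reformulation.

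\emph{Step 1 (source fibers and the $L^2$-metric).} The base of $\gdiff(M)$ is the space of multi-densities $(\rho_a)_{a\in A}$ with $\int_A\rho_a\,da=1$, and the source fiber over $(\rho^0_a)$ consists of families $\bphi=(\phi_a)_{a\in A}$ of diffeomorphisms of $M$ for which the transported multi-density $((\phi_a)_*\rho^0_a)_{a\in A}$ again satisfies $\int_A(\phi_a)_*\rho^0_a\,da=1$. As in Arnold's group case, right translations map source fibers to source fibers and act on each of them by precomposition, so a right-invariant metric is determined by its value at the unit section; taking the latter to be the $L^2$-product $\langle(u_a),(v_a)\rangle_{(\rho_a)}=\int_A\!\int_M\rho_a\langle u_a,v_a\rangle\,\vol\,da$ on the algebroid fiber, one computes by right-invariance and a change of variables that along a path $\bphi(t)$ with $u_a=\partial_t\phi_a\circ\phi_a^{-1}$ the energy equals $\tfrac12\int_0^1\!\int_A\!\int_M\rho^0_a\,|\partial_t\phi_a|^2\,\vol\,da\,dt$, i.e. Brenier's action. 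Here the algebroid $\gvect(M)$ over $(\rho_a)$ is the space of velocity families $(u_a)$, with bracket the fiberwise (in $a$) Lie bracket of vector fields, taken with the sign appropriate to right-invariant metrics, and anchor the transport operation $(u_a)\mapsto(-\div(\rho_a u_a))_a$, which is precisely the continuity equation \eqref{eq:gf2}.

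\emph{Step 2 (constraint, pressure, geodesic equation).} Differentiating in $t$ the defining constraint of the source fiber and using $\partial_t\rho_a=-\div(\rho_a u_a)$ converts it into $\div\int_A\rho_a u_a\,da=0$, the incompressibility condition \eqref{eq:nde}; equivalently, minimizing the $L^2$-energy over a source fiber is the problem of extremizing Brenier's action subject to the pointwise-in-$(t,x)$ constraint $\int_A\rho_a(t,x)\,da=1$. Introducing a Lagrange multiplier $p(t,x)$ for this constraint --- this is the common pressure --- computing the first variation, and passing back from Lagrangian to Eulerian variables, one obtains $\rho_a(\partial_t u_a+\nabla_{u_a}u_a)=-\rho_a\,\grad p$, which together with \eqref{eq:gf2} is precisely \eqref{eq:gf1}; taking the divergence and integrating over $A$ reproduces the Poisson equation \eqref{eq:pe}, so the multiplier is a genuine function, unique up to a constant. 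This establishes the geodesic statement.

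\emph{Step 3 (algebroid/Hamiltonian form and the main difficulty).} For the equivalent formulation one passes to the dual algebroid $\gvect(M)^*$ via the $L^2$-metric --- the momentum dual to $(u_a)$ being $m_a=\rho_a u^\flat_a$ --- and invokes the general theorem of the paper identifying source-fiber geodesics of a one-sided-invariant metric with solutions of the groupoid Euler--Arnold equation for the Lie--Poisson-type structure on the dual algebroid; it then remains to check that, for the bracket and anchor above, this abstract equation unwinds to \eqref{eq:gf}, equivalently to \eqref{eq:gfeuler}--\eqref{eq:gf2}. The feature absent in Arnold's group case is precisely the nontrivial anchor: it moves the base point, producing the density evolution \eqref{eq:gf2} together with the accompanying ``moving inner product'' correction term that assembles $\partial_t m_a$ into the combination appearing in \eqref{eq:gf1}. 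The main obstacle I expect is functional-analytic, in the continuum-of-phases regime of genuine generalized flows: there $\gdiff(M)$ is not a Lie groupoid in any classical sense, ``generalized diffeomorphisms'' must be understood in a Brenier-type measure-theoretic way (bistochastic maps, or measures on the path space of $M$), and the Lagrange-multiplier step together with the regularity of the pressure have to be justified beyond the smooth finite-phase case rather than treated only formally.
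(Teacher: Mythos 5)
Your proposal is correct in substance but runs in the opposite direction from the paper. The paper's proof of Theorem \ref{thmMain4} is Hamiltonian-first: it constructs the smooth dual $\gvect(M,\bmu)^*=\Omega^1(M)^A/\delta(d\Cont^\infty(M))$, derives the explicit Poisson bracket \eqref{pbf} and Hamiltonian operator \eqref{HamOperator} on it, computes the differential of the kinetic-energy Hamiltonian, and obtains the Euler--Arnold equation on cosets (Theorem \ref{thmMain3}); the pressure then appears not as a Lagrange multiplier but as the $df$-ambiguity in the choice of a coset representative, pinned down by the co-closedness normalization \eqref{eq:cc2} and a Poisson equation, exactly as in the proof of Theorem \ref{thmMain2}. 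The geodesic interpretation is then read off via the general identification of Remark \ref{rem:idofmetrics}. You instead do the geodesic half directly, as a constrained least-action principle on a source fiber (Brenier's action subject to $\int_A\rho_a\,da=1$) with the pressure as the multiplier, and defer the Hamiltonian half to the general correspondence plus an unwinding of the bracket and anchor that you do not carry out. Your route is closer to Brenier's and Loeschcke's original variational derivations and gets \eqref{eq:gf1}--\eqref{eq:gf2} more cheaply; but note that (i) the solvability of the multiplier/pressure problem is in either route the same Hodge-type fact (Proposition \ref{prop:ccRepr}), which you use implicitly, and (ii) the ``Equivalently'' clause of the theorem is precisely the content the paper's route supplies, and there the algebroid bracket is \emph{not} the plain fiberwise Lie bracket as you state in Step 1: it carries the correction terms $\L_{\#U}V-\L_{\#V}U$ of \eqref{sectionsBracket} (the fibers are not closed under the plain bracket, Remark \ref{remark:compensation}), and these are exactly what produces the ``moving inner product'' terms you allude to in Step 3. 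So as written your argument proves the geodesic statement and only sketches the Euler--Arnold one; completing Step 3 would essentially reproduce the paper's computation.
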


The Lie groupoid $\gdiff(M)$ of generalized diffeomorphisms is {a natural generalization of the group $\SDiff(M)$ of volume-preserving diffeomorphisms. (Just like the latter arises from ``integrating'' the condition $\div u =0$ on fluid velocities, the groupoid $\gdiff(M)$ ``integrates'' equation \eqref{eq:nde}.) The definition of that groupoid is as follows}. Its base is the space $\gdens(M)$ of  \textit{generalized densities}, i.e. sets of densities $ \bmu:=\{\mu_a \in \Dens(M) \mid a\in A\}$ such that all $\mu_a$ are positive, have prescribed masses 
 $c_a$, i.e. \begin{equation}\label{eq:ca} \int_M  \mu_a = c_a,\end{equation} and together constitute the fixed volume form $\vol$ on $M$, i.e. $\int_A \mu_a \,da = \vol$ at each point of $M$ {(in particular, $\int_A c_a da = \int_M \vol$)}. 
One can think of those densities as a set $A$ of different  
fractions of an incompressible fluid, penetrating through each other without resistance. Such a generalized density $\bmu$ can also be interpreted as a doubly stochastic measure $\mu_a \wedge da$ on the direct product $M \times A$. { The relation between densities $\mu_a$ and functions $\rho_a$ introduced above is $\mu_a = \rho_a \vol$. In particular, the condition $\int_A \mu_a \,da = \vol$ is equivalent to {the constraint $\int_A \rho_a \,da = 1$}.}

The elements of  $\gdiff(M)$ are triples $(\bphi\,;\bmu, \bmu')$ where $\bphi:=\{\phi_a \in \Diff(M) \mid a\in A\}$
is a \textit{generalized diffeomorphism}, and  $\bmu, \bmu'\in \gdens(M)$ are generalized densities such that  $\bphi_*\bmu=\bmu'$ component-wisely, i.e. $\phi_{a*}\mu_a=\mu'_a$ for each $a\in A$. 
The multiplication of such triples is defined by the natural composition, $(\bpsi\,;\bmu', \bmu'')(\bphi\,;\bmu, \bmu') := (\bpsi\bphi\,;\bmu, \bmu'')\,.$

The  infinitesimal object corresponding to this Lie groupoid is the Lie algebroid $\gvect(M)$
 describing the space of velocities for a generalized fluid. It is a vector bundle over $ \gdens(M)$ with the following 
structure. Its fiber over $\bmu  \in \gdens(M)$ is the space $\gvect(M, \bmu)$ that  consists
 of \textit{generalized vector fields} on $M$ of the form 
$\bu := \{ u_a \mid a\in A\} $ with  $u_a\in \vect(M)$ that are ``divergence-free" with respect to the generalized volume form: $\int_A \L_{u_a}\mu_a \,da=0$ { (the latter equation is equivalent to \eqref{eq:nde}).} The vector bundle  $\gvect(M)$ carries additional structures, namely a bracket on sections and a so-called anchor map, see Section \ref{sec:GF}. These structures endow the dual bundle $\gvect(M)^*$ with a Poisson structure. Equations \eqref{eq:gf} are Hamiltonian with respect to that structure:

\begin{theorem}[=Theorem \ref{thmMain3}]\label{thmIntro2}
The Euler equations \eqref{eq:gf} for a generalized flow 
written on the dual  $\gvect(M)^*$ of the algebroid are Hamiltonian with respect to 
the natural Poisson structure on  the dual algebroid and the Hamiltonian function given by the $L^2$ kinetic energy.
\end{theorem}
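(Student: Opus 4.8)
The plan is to combine Theorem~\ref{thmIntro1} with the general groupoid analogue of Arnold's theorem: for a Lie groupoid carrying a one-sided invariant fibrewise metric, the Euler--Arnold equation on the algebroid, transported to the dual algebroid by the inertia operator, is the Hamiltonian equation for the quadratic kinetic energy with respect to the canonical linear Poisson structure on the dual algebroid --- exactly as a right-invariant metric on a Lie \emph{group} yields the Lie--Poisson equation on the dual of its Lie algebra. Theorem~\ref{thmIntro1} already identifies \eqref{eq:gf} with the groupoid Euler--Arnold equation of the $L^2$-metric on $\gvect(M)$, so the only remaining work is to make this mechanism explicit for $\gvect(M)$ and to check that the resulting Hamiltonian equation is \emph{literally} \eqref{eq:gf} written on $\gvect(M)^*$. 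This breaks into (i) describing the dual bundle $\gvect(M)^*$ and the inertia operator of the $L^2$-metric, and (ii) unwinding $\partial_t m=\{m,E\}$ into \eqref{eq:gf1}--\eqref{eq:gf2}.

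For (i): over a generalized density $\bmu$, the fibre $\gvect(M,\bmu)$ is the subspace of the space of \emph{all} families $\bu=\{u_a\mid a\in A\}$, $u_a\in\vect(M)$, cut out by the single constraint $\int_A\L_{u_a}\mu_a\,da=0$. Pairing such families of vector fields with families of $1$-forms via $\langle\{\alpha_a\},\bu\rangle_{\bmu}:=\int_A\big(\int_M\iota_{u_a}\alpha_a\,\mu_a\big)\,da$, an integration by parts identifies the annihilator of $\gvect(M,\bmu)$ with the diagonally embedded subspace $\{\,dg\mid g\in\Cont^\infty(M)\,\}$ consisting of a single common exact $1$-form. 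Hence $\gvect(M,\bmu)^*$ is the space of families of $1$-forms $\{[\alpha_a]\}$ taken modulo adding one common exact form, and this quotient direction is precisely where the pressure lives. The inertia operator of the $L^2$-metric is the isomorphism $\bu\mapsto\{[u_a^\flat]\}$ ($\flat$ being the musical isomorphism of the metric on $M$); it defines the Hamiltonian $E$ (the $L^2$ kinetic energy) on $\gvect(M)^*$, and an envelope-type computation, in which the variation of $\bu$ contributes nothing because $\bu\in\gvect(M,\bmu)$, gives $\partial E/\partial\mu_a=\tfrac12|u_a|^2$.

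For (ii): the anchor of $\gvect(M)$ sends $\bu$ to the tangent vector $\{-\L_{u_a}\mu_a\}\in T_{\bmu}\gdens(M)$ (which is tangent to $\gdens(M)$ precisely because $\int_A\L_{u_a}\mu_a\,da=0$), so the base part $\partial_t\bmu=\pi(\bu)$ of Hamilton's equation is the continuity equation \eqref{eq:gf2} in the form $\partial_t\mu_a+\L_{u_a}\mu_a=0$. For the fibre part, $\{m,E\}$ is the sum of a coadjoint-type term, built from the bracket of $\gvect(M)$ and equal (with signs fixed by the right-invariant convention) to $-\ad^*_{\bu}m=-[\L_{u_a}\alpha_a]$, and an anchor term, built from the base-dependence of $E$ via the transpose anchor and equal to $\tfrac12[d|u_a|^2]$; since $[\L_{u_a}\alpha_a]=[\iota_{u_a}d\alpha_a]+[d|u_a|^2]$, their sum is $-[\iota_{u_a}d\alpha_a]-\tfrac12[d|u_a|^2]$. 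On the other hand, applying $\flat$ to \eqref{eq:gfeuler} and using $(\nabla_vv)^\flat=\L_vv^\flat-\tfrac12\,d|v|^2$ yields $\partial_t\alpha_a=-\iota_{u_a}d\alpha_a-\tfrac12\,d|u_a|^2-dp$ for $\alpha_a=u_a^\flat$, whose class modulo the common exact form $dp$ equals $-[\iota_{u_a}d\alpha_a]-\tfrac12[d|u_a|^2]$. Thus $\partial_t m=\{m,E\}$ is equivalent to \eqref{eq:gf}, with $p$ recovered from the Poisson equation \eqref{eq:pe} as the function pinning down the lift $\alpha_a=u_a^\flat$ inside $\gvect(M,\bmu)$.

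The main obstacle is the bookkeeping of the term $\tfrac12\,d|u_a|^2$, which is where the genuinely groupoid (as opposed to Lie-algebra) character of the problem enters: in the classical Euler case the reduced equation lives on the \emph{fixed} space $\SVect(M)^*=\Omega^1(M)/d\Cont^\infty(M)$ and is a pure Lie--Poisson equation, whereas here the base $\bmu$ moves, the dual fibre varies with it, and one may quotient only by a \emph{single} common exact form; the coadjoint term and the moving-base term each contribute a $d|u_a|^2$-piece, and they combine with exactly the factor $\tfrac12$ that leaves the pressure a single common function. Establishing this cancellation --- together with the well-definedness of $E$ on the quotient through the inertia isomorphism, the subalgebroid structure of $\gvect(M)$ inside the unconstrained algebroid, and the elliptic solvability of $\int_A\L_{u_a}\mu_a\,da=0$ for $p$ (yielding \eqref{eq:pe}) --- is the crux; the remaining steps are routine identities for Lie derivatives and for the transpose anchor.
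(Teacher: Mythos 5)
Your proposal is correct and follows essentially the same route as the paper's proof of Theorems \ref{thmMain3} and \ref{thmMain4}: identify $\gvect(M,\bmu)^*$ with $\Omega^1(M)^A/\delta(d\Cont^\infty(M))$, compute the Hamiltonian operator $(\bu,[\bF])\mapsto(-[i_{\bu}\diff\balpha]-[d\bF],\,-\L_\bu\bmu)$ from the action-algebroid Poisson bracket, evaluate $d^F\H=\bu$ and $d^B\H=\tfrac12[(\balpha,\balpha)]$ to obtain the coset equation $\partial_t[\balpha]+[i_\bu \diff\balpha+\tfrac12\diff i_\bu\balpha]=0$, $\partial_t\bmu=-\L_\bu\bmu$, and then pass to velocities via $(\L_u u^\flat-\tfrac12\diff(u,u))^\sharp=\nabla_u u$, recovering the pressure from the Poisson equation. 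Your identification of the $\tfrac12\,d|u_a|^2$ bookkeeping and of the single common exact form (the pressure direction) as the crux matches the paper exactly; the only cosmetic difference is how you apportion the fiber part between the ``coadjoint'' and ``anchor'' contributions, a splitting the paper itself presents in two equivalent forms, \eqref{pbf} and \eqref{pbf2}.
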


The above two theorems provide the group-theoretic and Hamiltonian frameworks for generalized flows.

\begin{remark}The 
smoothness of the groupoid and algebroid is understood below 
in the Fr\'{e}chet $C^\infty$ setting. Similarly, one can consider the setting of Hilbert manifolds modeled on Sobolev $H^s$ spaces for sufficiently large $s$, 
$s>\dim M/2 +1$, 
cf. \cite{EbMars70}.\end{remark}
{
\begin{remark}
Theorem \ref{thmIntro2} remains valid if we exclude the condition \eqref{eq:ca} from the definition of the groupoid. That condition is added for technical reasons (specifically, to make the groupoid \emph{transitive}, see Definition \ref{def:trans} below) and does not affect equations \eqref{eq:gf}. Indeed, preservation of masses $ \int_M  \mu_a$ is just a consequence of those equations.
\end{remark}
}

\subsection{Groupoid setting for multiphase fluids}

In this section we discuss the ``discrete version" of generalized flows, namely, multiphase flows on a Riemannian  manifold $M$. Such flows appear in \cite{brenier} in the context of homogenized vortex sheets and are governed by the following equations:
\begin{align}\label{eq:hvse}
\begin{cases}
\partial_t u_j + \nabla_{u_j}u_j = -\grad p\,, \\ 
  \partial_t \rho_j + \div(\rho_j u_j) = 0\,.
\end{cases}
\end{align}
Here $\rho_1, \dots, \rho_n\in C^\infty(M)$ are mass densities of $n$ phases of the fluid subject to { the constraint} $\sum_{j=1}^n \rho_j = 1$, the vector fields $u_1, \dots, u_n\in \Vect(M)$ are the corresponding fluid velocities, and the pressure $p \in \Cont^\infty(M)$ is common for all phases. { These equations can be thought of as a discrete analogue of \eqref{eq:gf}, which becomes particularly transparent upon rewriting equation  \eqref{eq:gf1} in the form \eqref{eq:gfeuler}. Conversely, we can rewrite the first equation in \eqref{eq:hvse} in the form 
$$
\partial_t (\rho_j u_j) + \div(\rho_j u_j \otimes u_j) + \rho_j \grad p = 0.
$$
Furthermore, the second equation implies 
\begin{equation}\label{eq:sde}
\div \sum_{j=1}^n \rho_j u_j = 0,
\end{equation}
which results in the following equation for the pressure, cf. \eqref{eq:pe}:
$$
\Delta p = -\div \sum_{j=1}^n  \div(\rho_i u_i \otimes u_i).
$$}

 The Lie groupoid $\mdiff(M)$ underlying equations \eqref{eq:hvse} is a discrete version of the groupoid $\gdiff(M)$. Its base is the space $\mdens(M)$ of  \textit{multiphase densities}, i.e. $n$-tuples of densities 
$ \bmu:=(\mu_1,...,\mu_n)$ satisfying the conditions that all densities $\mu_i$ are positive and sum to a fixed density $\vol$ everywhere on $M$, 
while their total masses are given by a fixed $n$-tuple of constants $c_1, \dots, c_n \in \R$. 
These densities can be thought of as densities of different mutually penetrating fractions of the fluid,  
subject only to the total incompressibility condition. %

Now the elements of our Lie groupoid $\mdiff(M)$ are $n$-tuples of diffeomorphisms of $M$ preserving the property of incompressibility of multiphase densities,
i.e. the set of tuples $(\bphi\,;\bmu, \bmu'):=(\phi_1, ..., \phi_n;\mu_1, ..., \mu_n, \mu'_1, ..., \mu'_n)$ with multiphase forms $\bmu, \bmu'\in \mdens(M)$ such that the 
multiphase diffeomorphism $\bphi$ push-forwards one of them to the other, $\bphi_*\bmu=\bmu'$ component-wisely. 
The multiplication in $\mdiff(M)$ is defined in the same way as for $\gdiff(M)$.

The corresponding Lie algebroid $\mvect(M)$  
is the space of possible velocities of the multiphase fluid. It is a vector bundle over $ \mdens(M)$ where
the fiber of $\mvect(M)$ over $\bmu \in \mdens(M)$ is the space $\mvect(M, \bmu)$ which consists
 of \textit{multiphase vector fields} on $M$ ``divergence-free" with respect to the multiphase volume form, i.e. vector fields of the form 
$\bu := (u_1,...,u_n)$, where 
 $u_{i}\in \vect(M)$ are such that  $\sum\nolimits_{j=1}^n \L_{u_j}\mu_j=0 $.

\begin{theorem}[=Theorem \ref{thmMain2}]\label{thmIntro3} 
The Euler equations \eqref{eq:hvse} for a multiphase fluid flow are geodesic
equations for the right-invariant $L^2$-metric on (source fibers of) the Lie groupoid $\mdiff(M)$ 
of multiphase volume-preserving diffeomorphisms. Equivalently, they are  groupoid  Euler-Arnold equations  corresponding to the 
$L^2$-metric on the algebroid $\mvect(M)$.  \end{theorem}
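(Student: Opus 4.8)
The plan is to adapt Arnold's geodesic argument to the groupoid $\mdiff(M)$: realize the system \eqref{eq:hvse} as the constrained Euler--Lagrange equations of the kinetic energy functional on a source fiber of $\mdiff(M)$, and then reconcile this with the abstract Euler--Arnold reduction for one-sided invariant metrics on Lie algebroids developed in Section \ref{sec:GF}.

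I would begin with the geometry of a source fiber. Fix $\bmu = (\mu_1,\dots,\mu_n)\in\mdens(M)$. The source fiber $\Src^{-1}(\bmu)$ is the set of triples $(\bphi;\bmu,\bphi_*\bmu)$ with $\bphi = (\phi_1,\dots,\phi_n)$ ranging over the $n$-tuples of diffeomorphisms that satisfy the single density-valued relation $\sum_{j=1}^n(\phi_j)_*\mu_j = \vol$ (a push-forward automatically preserves each mass $c_j$, so this is the only constraint). One checks it is a smooth submanifold of $\Diff(M)^n$ whose tangent space at the unit is the algebroid fiber $\mvect(M,\bmu) = \{\bu : \sum_j\L_{u_j}\mu_j = 0\}$. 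Right translation by a groupoid element is a diffeomorphism of source fibers sending units to arbitrary points and acting on tangent vectors by $\dot\phi_j\mapsto\dot\phi_j\circ\psi_j$; hence the fiberwise $L^2$ product $\langle\bu,\bv\rangle_{\bmu} := \sum_j\int_M\langle u_j,v_j\rangle\,\mu_j$ on $\mvect(M)$ propagates to a genuinely right-invariant metric on source fibers, along which the squared speed of a curve $t\mapsto\bphi(t)$ in $\Src^{-1}(\bmu)$ is $\sum_j\int_M|\dot\phi_j|^2\,\mu_j$, i.e. twice the kinetic energy of the multiphase flow with phase velocities $u_j := \dot\phi_j\circ\phi_j^{-1}$ and densities $\mu_j(t) := (\phi_j(t))_*\mu_j$. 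Along such a curve $\mu_j(t)$ automatically solves $\partial_t\mu_j + \L_{u_j}\mu_j = 0$, i.e. $\partial_t\rho_j + \div(\rho_j u_j) = 0$ with $\rho_j := \mu_j/\vol$ --- the second line of \eqref{eq:hvse} --- while the requirement that $\bphi(t)$ remain in the fiber is exactly $\bu(t)\in\mvect(M,\bmu(t))$, i.e. $\sum_j\L_{u_j}\mu_j(t) = 0$, which is the constraint \eqref{eq:sde}.

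Next I would compute the geodesic equation by a constrained variation. Geodesics are the critical curves of $\int_0^1\sum_j\int_M|\dot\phi_j|^2\mu_j\,dt$ with fixed endpoints among curves in $\Src^{-1}(\bmu)$, the admissible variations being those tangent to the fiber at each time. The Lagrange multiplier dual to this single scalar-density constraint is a single time-dependent function $p(t)\in\Cont^\infty(M)$, common to all $n$ phases, precisely because $\sum_j\mu_j = \vol$ couples the phases into one equation rather than imposing $n$ separate incompressibility conditions; equivalently, the $L^2$-orthogonal complement of $\mvect(M,\bmu(t))$ inside $\Vect(M)^n$ consists of the diagonal fields $(\grad q,\dots,\grad q)$. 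Carrying out the variation and integrating by parts gives, componentwise in Lagrangian coordinates, $\ddot\phi_j = -(\grad p)\circ\phi_j$, i.e. $\partial_t u_j + \nabla_{u_j}u_j = -\grad p$ --- the first line of \eqref{eq:hvse} --- and $p$ is then determined up to a constant by the constraint, equivalently by the Poisson equation $\Delta p = -\div\sum_j\div(\rho_j u_j\otimes u_j)$, whose unique solvability is also what makes the orthogonal projection in the projected geodesic spray well defined. Conversely, given a solution $(u_j,\rho_j,p)$ of \eqref{eq:hvse}, I would let $\phi_j(t)$ be the flows of $u_j$ and check, using $\sum_j\rho_j = 1$ and \eqref{eq:sde}, that the push-forward densities stay in $\mdens(M)$ and the curve is a geodesic. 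This gives the equivalence with the geodesic equations; the ``Euler--Arnold on $\mvect(M)$'' reformulation then follows by pushing the geodesic down to the algebroid through the general reduction of Section \ref{sec:GF} (cf. \cite{IK}) and reading off the anchor, the bracket and the $L^2$ inertia operator, which return \eqref{eq:hvse} (dually, the Hamiltonian form on $\mvect(M)^*$).

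The main work is analytic rather than formal, exactly as in the Ebin--Marsden treatment of Arnold's theorem: one must show that $\Src^{-1}(\bmu)$ is an honest smooth manifold, that the $L^2$-metric and the constrained projection are smooth, and that the formal variational computation is legitimate in the Fr\'echet (or Sobolev $H^s$, $s>\dim M/2+1$) category. The one conceptual point specific to the multiphase setting --- and the reason \eqref{eq:hvse} carries a single pressure shared by all phases --- is that $\mdens(M)$ is cut out of $\Dens(M)^n$ by the one relation $\sum_j\mu_j = \vol$; once the manifold structure is in hand, the rest is a routine transcription of Arnold's computation with a sum over phases replacing a single component.
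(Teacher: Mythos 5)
Your proposal is correct, but it proves the theorem by a genuinely different route than the paper. You run the classical Arnold/Ebin--Marsden least-action argument directly on a source fiber: realize $\Src^{-1}(\bmu)$ as the submanifold of $\Diff(M)^n$ cut out by the single relation $\sum_j(\phi_j)_*\mu_j=\vol$, observe that the weighted $L^2$-orthocomplement of $\mvect(M,\bmu)$ in $\Vect(M)^n$ is the diagonal $\delta(\grad \Cont^\infty(M))$ (which is exactly the dual statement to the paper's Proposition \ref{prop:nnp}, and whose complementarity again reduces to a Poisson equation as in Proposition \ref{prop:ccRepr}), and conclude that geodesics are characterized by the material acceleration $\partial_t u_j+\nabla_{u_j}u_j$ lying in that orthocomplement, i.e.\ equal to $-\grad p$ for a single multiplier $p$. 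The paper instead derives equations \eqref{twoPhaseEuler2} as a corollary of the Hamiltonian computation: it first obtains the coset form \eqref{twoPhaseEulerCosets} by applying the explicit Hamiltonian operator \eqref{HamOperator} to $d\mathcal H$ on $\mvect(M)^*$ (Theorem \ref{thmMain}), then translates $\partial_t\alpha_j+i_{u_j}\diff\alpha_j+\tfrac12\diff i_{u_j}\alpha_j=df$ into velocity form via $(\L_u u^\flat-\tfrac12 \diff(u,u))^\sharp=\nabla_u u$; the geodesic interpretation is then read off from the general identification of algebroid Euler--Arnold flows with right-invariant source-wise geodesics (Remark \ref{rem:idofmetrics}). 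Your Lagrangian route is more elementary and makes the physical origin of the common pressure transparent (one scalar constraint, one multiplier), but by itself it does not deliver Theorem \ref{thmMain}/\ref{thmIntro4}, i.e.\ the Poisson bracket on $\mvect(M)^*$ and the Hamiltonian form of the equations, nor the coset equation \eqref{twoPhaseEulerCosets1} from which the generalized Kelvin theorem (Corollary \ref{cor:singKelvin}) drops out by taking $d$; in your scheme those would still have to be established separately, essentially by redoing the paper's Section \ref{sect:poisson_vs}. One small inaccuracy: the abstract Euler--Arnold/reduction machinery you invoke lives in Sections \ref{sec:gen}--\ref{section:algsub}, not in Section \ref{sec:GF} (which treats the continuum-of-phases case). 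As you note, the genuinely hard part in either approach is the analytic justification (manifold structure of the source fiber, smoothness of the projection), which the paper likewise defers to the Fr\'echet/Sobolev framework of Remark \ref{frechet}.
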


For the case of a flat space $M$ the geodesic (although not the group) nature of homogenized vortex sheets (i.e. multiphase flows) was established in  \cite[Proposition 6]{Loesch}.
One can  see that the standard hydrodynamical Euler equation is a particular case of the above multiphase equations 
with only one phase, $n=1$. Furthermore,  equations~\eqref{eq:hvse}   can be described within the Hamiltonian framework:

\begin{theorem}[=Theorem \ref{thmMain}]\label{thmIntro4}
The Euler equations \eqref{eq:hvse} for a multiphase flow 
written on the dual  $\mvect(M)^*$ of the algebroid are Hamiltonian with respect to 
the natural Poisson structure on  the dual algebroid and the Hamiltonian function given by the $L^2$ kinetic energy.
\end{theorem}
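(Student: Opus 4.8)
The plan is to realize equations \eqref{eq:hvse} as the Lie–Poisson (dual-algebroid) equations for the kinetic-energy Hamiltonian on $\mvect(M)^*$, in exact parallel with the classical Arnold picture for $\SDiff(M)$ but with the Lie algebra replaced by the Lie algebroid $\mvect(M)$ over the base $\mdens(M)$. First I would make the Poisson structure on $\mvect(M)^*$ explicit. Recall the general fact (which I would state and cite, e.g. from the Lie-algebroid literature and from \cite{IK}): the dual bundle of a Lie algebroid carries a canonical linear Poisson structure, under which the Hamiltonian vector field of a fiberwise-linear function (coming from a section $\bv$ of the algebroid) acts on another such function via the algebroid bracket, and the Hamiltonian vector field of a function pulled back from the base acts via the anchor. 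Concretely, I would fix a convenient model for $\mvect(M,\bmu)^*$: a cotangent vector at $\bmu$ is represented by an $n$-tuple of $1$-forms $[\balpha]=([\alpha_1],\dots,[\alpha_n])$ modulo the appropriate subspace making the pairing $\langle [\balpha],\bu\rangle = \sum_j \int_M \alpha_j(u_j)\,\mu_j$ well defined on divergence-free multiphase fields (so the $\alpha_j$ are defined modulo exact forms in a suitable combined sense dictated by the constraint $\sum_j \L_{u_j}\mu_j = 0$). I would describe the anchor $\mvect(M,\bmu)\to T_{\bmu}\mdens(M)$, $\bu\mapsto -(\L_{u_1}\mu_1,\dots,\L_{u_n}\mu_n)$, and the bracket on sections induced by the $n$-tuple of Lie brackets of vector fields together with how $\bmu$ varies; these are exactly the structures referenced in the statement.

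Next I would identify the kinetic-energy Hamiltonian $H$ on $\mvect(M)^*$. Using the Riemannian metric, the inertia (Legendre) operator sends $\bu\mapsto [\balpha]$ with $\alpha_j = u_j^\flat$, and $H([\balpha]) = \tfrac12\sum_j\int_M |u_j|^2\,\mu_j$ where $\bu$ is the divergence-free multiphase field representing $[\balpha]$. The plan is then to compute Hamilton's equations $\dot m = \{m, H\}$ on the dual algebroid in this model. The base component of the motion, governed by the anchor applied to $\mathrm{grad}\,H = \bu$, reproduces exactly the continuity equations $\partial_t\mu_j = -\L_{u_j}\mu_j$, i.e. $\partial_t\rho_j + \div(\rho_j u_j)=0$. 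The fiber component is the Euler–Arnold/coadjoint-type equation: $\partial_t[\balpha] = -\,\mathrm{ad}^*_{\bu}[\balpha]$, computed using the algebroid bracket; unwinding the coadjoint operator componentwise gives $\partial_t(u_j^\flat) + \L_{u_j}(u_j^\flat) = -\,dp$ for a common function $p$, which is the standard rewriting of $\partial_t u_j + \nabla_{u_j}u_j = -\grad p$. The single common $p$ appears precisely because the quotient defining $\mvect(M,\bmu)^*$ collapses the $n$ independent "exact-form" directions down to one, reflecting the single constraint \eqref{eq:sde}; this is the crucial point where the algebroid structure (as opposed to a product of $n$ copies of the Euler setup) does the work.

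The main obstacle, I expect, is the careful bookkeeping of the coadjoint action in the presence of a moving base point. Unlike the Lie-algebra case, $\mathrm{ad}^*_{\bu}$ on a Lie algebroid is not simply dual to a bracket of fibers: one must account for the fact that the fiber $\mvect(M,\bmu)^*$ itself depends on $\bmu$, so the "coadjoint" equation has an extra term coming from differentiating the $\bmu$-dependence of the pairing (equivalently, of the quotient). Pinning down the correct formula for $\mathrm{ad}^*$ — and checking that, after passing to the quotient, the ambiguity is exactly a single $dp$ with $p$ determined by a Poisson equation as in \eqref{eq:pe} — is the technical heart of the argument. I would handle it by working on representatives (un-quotiented $n$-tuples of $1$-forms and un-constrained multiphase fields), deriving the evolution there, and then verifying consistency with the constraint $\sum_j\L_{u_j}\mu_j=0$, which forces the Lagrange-multiplier term to be the gradient of a single function. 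Once this is in place, Theorem~\ref{thmMain2} (geodesic description) follows by combining the present Hamiltonian statement with the standard fact that the Legendre transform intertwines the geodesic flow of a right-invariant metric on source fibers with the Euler–Arnold flow on the dual algebroid; alternatively the two theorems can be proved together, the geodesic statement supplying the identification of $H$ and the Poisson computation supplying the equations.
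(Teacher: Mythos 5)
Your proposal is correct and follows essentially the same route as the paper: identify the smooth dual $\mvect(M,\bmu)^*$ as $n$-tuples of $1$-forms modulo the \emph{diagonal} exact forms $\delta(d\Cont^\infty(M))$ (which is exactly what forces a single common pressure), use the canonical linear Poisson structure on the dual algebroid together with the inertia operator $\bu\mapsto[\bu^\flat]$, and split Hamilton's equations into the base part (continuity equations via the anchor) and the fiber part (the coadjoint-type equation with the extra term from the $\bmu$-dependence, which the paper packages as $d^B\H=\tfrac12[(\balpha,\balpha)]$ fed into the Hamiltonian operator). The technical points you flag — the moving base point and pinning the Lagrange multiplier to a single $dp$ determined by a Poisson equation — are precisely the ones the paper resolves by choosing the co-closed representative of the coset.
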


This theorem is an analogue of the Hamiltonian property of the 
Euler-Arnold equation  on the dual to a Lie algebra with respect to the Lie-Poisson structure.

Return to the metric properties of the groupoid Euler-Arnold equation.  
Given any initial density  $\bmu \in \mdens(M)$, consider the subset $\mdiff(M)_\bmu\subset \mdiff(M)$ of multiphase diffeomorphisms which push $\bmu$ forward to another multiphase density (a so-called \textit{source fiber} of the groupoid $\mdiff(M)$). 
That set is equipped with an $L^2$-metric. At the same time, there is a natural metric $\metric_{\mdens}$ on  the space $\mdens(M)$ of multiphase densities induced by the well known Wasserstein metric. The connection between those two metrics is described by the following result.

\begin{theorem}[=Theorem \ref{geodDescription}]\label{thmIntro5}
For any multiphase density  $\bmu \in \mdens(M)$  the groupoid target mapping 
$\Trg \colon (\mdiff(M)_\bmu, \metric_{L^2}) \to (\mdens(M), \metric_\mdens)$ is a Riemannian submersion.
In particular, horizontal geodesics on  $\mdiff(M)_\bmu$ project to geodesics on  $\mdens(M)$. Those geodesics correspond to potential solutions of the system  \eqref{eq:hvse}.
 \end{theorem}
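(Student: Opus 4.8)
The plan is to verify the defining properties of a Riemannian submersion for the target map $\Trg \colon (\mdiff(M)_\bmu, \metric_{L^2}) \to (\mdens(M), \metric_\mdens)$ directly at each point, and then deduce the geodesic projection statement from the general theory of Riemannian submersions. First I would fix a base point: an element $(\bphi\,;\bmu,\bmu')\in \mdiff(M)_\bmu$, whose target is the multiphase density $\bmu' = \bphi_*\bmu$. The tangent space to the source fiber $\mdiff(M)_\bmu$ at this point is identified, via right translation by $\bphi^{-1}$, with the space of multiphase vector fields $\bv=(v_1,\dots,v_n)$ on $M$ (with $v_j\in\Vect(M)$ arbitrary — the fiber of $\mdiff(M)_\bmu$ only constrains the initial density, not the perturbation), or more symmetrically with tuples $\bw=(w_1,\dots,w_n)$ attached at the target where $w_j = \phi_{j*}v_j$. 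I would compute the differential of $\Trg$ at this point: a variation of $\bphi$ with velocity $\bw$ moves $\bmu'$ by $-\L_{w_j}\mu'_j$ in the $j$-th component (the Lie-transport formula for pushed-forward densities), so the image of $d\Trg$ consists of tangent vectors to $\mdens(M)$ of the form $(-\L_{w_1}\mu'_1,\dots,-\L_{w_n}\mu'_n)$ subject to $\sum_j \L_{w_j}\mu'_j = 0$, which is exactly the tangent space $T_{\bmu'}\mdens(M)$ (perturbations summing to zero, with prescribed-mass constraint automatically satisfied since each $\L_{w_j}\mu'_j$ is exact).

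Next I would identify the vertical and horizontal subspaces. The vertical space $\ker d\Trg$ at $(\bphi\,;\bmu,\bmu')$ consists of those $\bw$ with $\L_{w_j}\mu'_j=0$ for every $j$, i.e. each $w_j$ is divergence-free with respect to $\mu'_j$ individually — these are the tuples tangent to the fiber of $\Trg$ over $\bmu'$, which is (a copy of) the product $\prod_j \SDiff(M,\mu'_j)$. The horizontal space is its $L^2$-orthogonal complement. Here I invoke the Hodge-type decomposition for each phase: any $w_j$ on $M$ (with the Riemannian metric, weighting the $L^2$ inner product by the density $\mu'_j = \rho'_j\vol$) splits uniquely as $w_j = \grad f_j + (\text{$\mu'_j$-divergence-free part})$ where the divergence-free part is $L^2(\mu'_j)$-orthogonal to gradients. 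Therefore a horizontal multiphase vector field is precisely one of the \emph{potential} (gradient) form $w_j=\grad f_j$ — but constrained to lie in the tangent space of the source fiber, i.e. $\sum_j \L_{\grad f_j}\mu'_j=0$. The key computation is then that for horizontal $\bw = (\grad f_1,\dots,\grad f_n)$, the $L^2$-metric $\sum_j \int_M |\grad f_j|^2\,\mu'_j$ equals the Wasserstein-induced metric $\metric_\mdens$ of the image tangent vector $(-\L_{\grad f_j}\mu'_j)_j$; this is essentially the Benamou--Brenier / Otto-calculus characterization of the Wasserstein metric, applied phase-by-phase and summed, so that $d\Trg$ restricted to the horizontal space is an isometry onto $T_{\bmu'}\mdens(M)$.

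The main obstacle I anticipate is establishing the surjectivity-and-isometry of the horizontal differential cleanly, which amounts to solving, for a prescribed tangent direction $(\dot\mu_1,\dots,\dot\mu_n)$ at $\bmu'$ (with $\sum_j\dot\mu_j=0$), the system $-\L_{\grad f_j}\mu'_j = \dot\mu_j$ together with the constraint $\sum_j\L_{\grad f_j}\mu'_j=0$. The equations $\div(\rho'_j\grad f_j)=$ (density of $\dot\mu_j$) are elliptic and solvable for each $f_j$ individually up to a constant, and the constraint is then automatic because $\sum_j\dot\mu_j=0$; one must check the solution depends smoothly on data and that the constants can be fixed consistently, which is where the prescribed-mass condition \eqref{eq:ca} and the transitivity of the groupoid enter. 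Once the Riemannian submersion property is in hand, the final assertion is immediate from standard Riemannian geometry: a geodesic of $\mdiff(M)_\bmu$ that is horizontal at one point stays horizontal (since the source-fiber geodesic equation is exactly the Euler--Arnold system \eqref{eq:hvse} by Theorem~\ref{thmIntro3}, and horizontality is preserved under the geodesic flow of a submersion), and its projection is a geodesic downstairs; horizontality of $\bu=(u_1,\dots,u_n)$ means each $u_j=\grad f_j$ is potential, so these are precisely the potential solutions of \eqref{eq:hvse}. I would close by remarking that the argument is the multiphase analogue of Otto's Riemannian submersion $\SDiff(M)\to\Dens(M)$, recovered here in the single-phase case $n=1$.
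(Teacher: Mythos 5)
Your argument is correct and follows essentially the same route as the paper: both hinge on the orthogonal decomposition of the tangent space into individually divergence-free multiphase fields and multiphase gradients (obtained by solving $\div(\rho_j\nabla f_j)=\div(\rho_j u_j)$) and on identifying the quotient metric with the product Wasserstein metric via the Otto/Benamou--Brenier formula, the only difference being that the paper packages the pointwise submersion check into the abstract Proposition~\ref{prop:sub} while you unwind it by hand along the source fiber. One minor slip: the tangent space to $\mdiff(M)_\bmu$ at $(\bphi;\bmu,\bmu')$ is not all of $\Vect(M)^n$ but, after right translation, the constrained space $\mvect(M,\bmu')$ with $\sum_j\L_{w_j}\mu'_j=0$ (since the target must remain a multiphase density); you reintroduce this constraint where it matters, so the argument is unaffected.
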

\begin{figure}[t]
\centerline{
\begin{tikzpicture}[thick, scale = 1.2]
\draw [thin] (0,-1.) -- (5,-1.);
  \fill [opacity = 0.05] (-0.2,3) -- (5.2,3) -- (5.2,0.5) -- (-0.2,0.5) -- cycle;
\draw [very thin] (0,0.7) -- (0,2.8);
\draw [very thin] (1,0.7) -- (1,2.8);
\draw [] (2,0.7) -- (2,2.8);
\draw [very thin] (3,0.7) -- (3,1.5);
\draw [very thin] (4,0.7) -- (4,1.5);
\draw [very thin] (5,0.7) -- (5,1.5);
\draw [very thin] (4,2.8) -- (4,1.5);
\draw [very thin] (5,2.8) -- (5,1.5);
\draw [very thin] (3,2) -- (3,2.3);
\draw [very thin] (3,2.7) -- (3,2.8);
\draw  [very thick, ->] (2,1.5) -- (2.5,1.5);
   \fill (2,1.5) circle [radius=1.5pt];
      \fill (2,-1.) circle [radius=1.5pt];
      \node  at (1.7,1.5) () {$\id_\bmu$};
      \draw [thin] (2,1.35) -- (2.15,1.35) -- (2.15,1.5);
           \node []  at (2.7,1.75) () {$\bu =  \grad \bF$};
          \node  at (2,-1.2) () {$\bmu$};
                 \node  at (2.8,-0.75) () {$\bxi = -\L_\bu \bmu$};
          \draw  [very thick, ->] (2,-1) -- (2.5,-1);
                   \draw  [->] (2,0.2) -- (2,-0.5);   
                             \node  at (2.3,-0.1) () {$\Trg$};    
                                       \node  at (0.2,-1.25) () {$\mdens(M)$};  
                                          \node  at (0.2,0.25) () {$\mdiff(M)_\bmu$};      
                                                                                    \node  at (2.9,2.5) () {$\SDiff_{\bmu}(M)$};      
\end{tikzpicture}
}
\caption{Riemannian submersion for the groupoid. Here $\SDiff_\bmu(M) := \{ \bphi \mid \phi_i^* \mu_i = \mu_i\}$ is the group of volume-preserving multiphase diffeomorphisms, and $\bu$ is a horizontal vector field projecting to $\bxi = -\L_\bu \bmu$. The latter can be regarded as the velocity of the multiphase density $\bmu$, and $\langle \bxi, \bxi \rangle_\mdens = \langle \bu, \bu \rangle_{L^2}$. }\label{fig:submersion}
\end{figure}
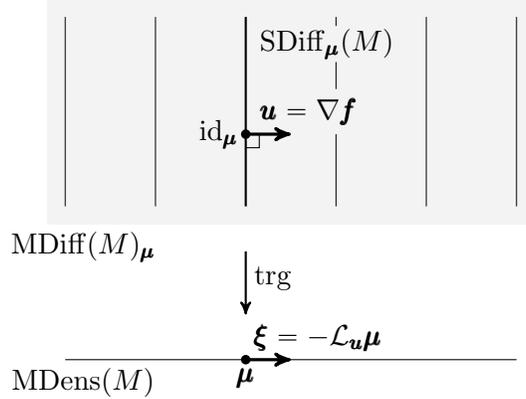

In particular, this result implies a geodesic description of potential solutions to \eqref{eq:hvse}, cf. \cite[Proposition 7]{Loesch}. 
These potential solutions have the form $\bu =  ( \grad f_1,...,   \grad f_n)$ in $M$, see Figure~\ref{fig:submersion}. 
The Wasserstein-type  metric $\metric_\mdens$ is apparently related to
the metric between vector densities described recently in \cite{BV, ciosmak2021optimal}.

{

One of the byproducts of the groupoid approach is the following generalized Kelvin's theorem.
Namely, define the multiphase vorticity $\pmb{\omega}:=\diff \bu^\flat$ for an $n$-tuple of vector fields $\bu \in \mvect(M)$ as
the component-wise vorticity $n$-tuple, i.e. $\omega_j := d u_j^\flat$ with  $u_j^\flat$ standing for the $1$-forms metric-dual to the vector fields $u_j$.

\begin{corollary}[=Corollary \ref{cor:singKelvin}]
For a multiphase fluid the vorticity is ``frozen into the flow" in the generalized sense:
$ \partial_t \pmb{\omega} + \L_{\bu} \pmb{\omega} = 0$, that is the vorticity of each phase is transported by the corresponding velocity field: $\partial_t \omega_j + \L_{u_j} \omega_j = 0\,.$
\end{corollary}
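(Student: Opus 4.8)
The plan is to derive the generalized Kelvin theorem directly from the second line of the multiphase Euler system \eqref{eq:hvse}, namely $\partial_t u_j + \nabla_{u_j} u_j = -\grad p$, treating each phase $j$ separately since the vorticity equation decouples component-wise. First I would rewrite this equation in terms of the metric-dual $1$-form $u_j^\flat$. Using the standard identity $\L_{u_j} u_j^\flat = (\nabla_{u_j} u_j)^\flat + \tfrac12 \diff |u_j|^2$ (equivalently, $\iota_{u_j}\diff u_j^\flat = (\nabla_{u_j}u_j)^\flat - \tfrac12\diff|u_j|^2$), the phase-$j$ momentum equation becomes
\begin{equation}
\partial_t u_j^\flat + \L_{u_j} u_j^\flat = \diff\!\left(\tfrac12 |u_j|^2 - p\right).
\end{equation}
This is exactly the same manipulation as in the classical Euler case, with the only difference that the right-hand-side potential is $\tfrac12|u_j|^2 - p$ rather than involving the Bernoulli function; crucially, $p$ is common to all phases but each phase carries its own kinetic term.

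Next I would apply the exterior derivative $\diff$ to both sides. Since $\diff$ commutes with $\partial_t$ and with the Lie derivative $\L_{u_j}$, and since $\diff \circ \diff = 0$ kills the exact right-hand side, we obtain
\begin{equation}
\partial_t \diff u_j^\flat + \L_{u_j} \diff u_j^\flat = 0,
\end{equation}
which is precisely $\partial_t \omega_j + \L_{u_j}\omega_j = 0$ with $\omega_j := \diff u_j^\flat$. Assembling these $n$ identities into the $n$-tuple $\pmb{\omega} = \diff\bu^\flat$ gives the stated conclusion $\partial_t \pmb{\omega} + \L_{\bu}\pmb{\omega} = 0$. Integrating the Lie-derivative transport equation along the flow of $u_j$ yields the usual ``frozen-in'' statement: the flux of $\omega_j$ through any surface transported by $\phi_{j,t}$ is constant, and by Stokes the circulation $\oint u_j^\flat$ around any loop advected by the $j$-th phase is conserved.

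The point I should emphasize is that there is essentially no obstacle here: the multiphase structure is irrelevant to the vorticity equation because the incompressibility constraint $\sum_j \L_{u_j}\mu_j = 0$ (and the resulting pressure Poisson equation) only enters in determining $p$, which disappears under $\diff$. The only mild subtlety worth a sentence is that the argument uses the form \eqref{eq:gfeuler}/\eqref{eq:hvse}-line-one of the momentum equation rather than the conservative form $\partial_t(\rho_j u_j) + \div(\rho_j u_j \otimes u_j) + \rho_j \grad p = 0$; the excerpt already records that the two are equivalent given the continuity equation $\partial_t\rho_j + \div(\rho_j u_j) = 0$, so I would simply invoke that equivalence. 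Thus the proof is a one-line-per-phase reduction to the classical Kelvin argument, and I would present it as such, with the identity $\iota_{u_j}\diff u_j^\flat = (\nabla_{u_j}u_j)^\flat - \tfrac12\diff|u_j|^2$ as the only computation displayed.
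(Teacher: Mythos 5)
Your proof is correct and is essentially the paper's argument: the paper's one-line proof takes the exterior derivative of the coset equation \eqref{twoPhaseEulerCosets1}, i.e.\ of $\partial_t u_j^\flat + i_{u_j}\diff u_j^\flat + \tfrac12 \diff i_{u_j}u_j^\flat = -\diff p$, which by Cartan's formula is exactly the $1$-form equation you derive before applying $\diff$. The only cosmetic difference is that you re-derive that $1$-form equation from the velocity form of \eqref{eq:hvse}, a conversion the paper has already carried out in the proof of Theorem \ref{thmMain2}.
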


We would like to emphasize that in the classical Euler equation, the vorticity (along with circulations in a non-simply-connected $M$) fully determine the velocity field. In the multiphase setting the situation is different: in particular, there are nontrivial solutions with zero vorticity and zero circulation. The reason is in the different geometry of  symplectic leaves 
for the corresponding Poisson bracket. Indeed, in the group setting these leaves are coadjoint orbits in $\g^*$ of the corresponding group, while in the case of an algebroid one has the group action only in the kernel bundle of the corresponding  anchor map (and in its dual bundle). The corresponding symplectic leaves are obtained by taking the inverse images of the orbits for the action under the projection of $A^*$ into that dual bundle, cf.  \cite{IK}. We hope to return to this description
in a future publication.
}

\subsection{Structure of the paper}

The rest of the paper is a detailed discussion of objects involved in the above theorems, along with proofs of those theorems. We start with the discrete case (Theorems \ref{thmIntro3} - \ref{thmIntro5}). It is discussed in Sections \ref{sec:kinematics} and \ref{sect:dyn_vs} (while in Section \ref{sec:gen} we recall basics of the groupoid and algebroid theory). The continuous case (Theorems \ref{thmIntro1} and \ref{thmIntro2}) is discussed in Section \ref{sec:GF}. The proofs in that case are very similar to the discrete situation, so we only discuss necessary modifications.

{Several open problems and suggested in Section \ref{sec:open}.
It is worth mentioning that the groupoid approach above may also allow one to give a geometric description 
for yet another equivalent point of view on generalized flows,  taken by Brenier \cite{brenier0} and Shnirelman \cite{Sh2} (see also \cite[Section IV.7]{AK}), as 
probabilistic measures on the space of parametrized continuous paths in the flow domain.}
It would be also interesting to describe the group and Hamiltonian picture for vector and matrix densities in \cite{BV, ciosmak2021optimal} and 
the surprising appearance of the general relativity equation for matrix measures  in \cite{brenier2022interpretation}.
\medskip

{\bf Acknowledgements.} 
We are indebted to the MFO Institute in Oberwolfach, Germany and its program of Research in Pairs, where this work was completed. { We are also grateful to the anonymous referee for various suggestions  improving the exposition.}
A.I. was supported by NSF grant DMS-2008021. B.K. was partially supported by an NSERC Discovery grant.

\medskip

\section[Lie groupoids and algebroids]{Lie groupoids and algebroids}\label{sec:gen}
In this section we briefly recall basic facts about Lie groupoids and algebroids (details can be found, e.g., in {\cite{dufour2006poisson, mackenzie2005general}}). %

\subsection{Lie groupoids}\label{sect:groupoid}

\begin{definition}
A \textit{groupoid} $\G \rightrightarrows \B$ is a pair of sets, $\B$ (the \textit{base} of the groupoid) and $\G$ (the groupoid itself), endowed with the following structures:
\begin{enumerate} \item Two maps $\Src,\Trg \colon \G \to \B$, called \textit{source} and \textit{target} respectively.
\item Partial binary operation $(g, h) \mapsto gh$ on $\G$ which is defined for all pairs $g,h \in \G$ such that $\Src(g) = \Trg(h)$ and has the following properties:
\begin{enumerate}
\item  The source of the product is the source of the right factor: $
\Src(gh) = \Src(h)$, while the target of the product is the target of the left factor: $\Trg(gh) = \Trg(g).$
\item Associativity: $g(hk) = (gh)k$ whenever any of those expressions is well-defined.
\item Identity: for any $x \in \B$, there exists an element $\id_x \in \G$ such that $\id_{\Trg(g)} \cdot  g =  g\cdot \id_{\Src(g)} = g$  for every $g \in \G$.
\item Inverse: for any $g \in \G$, there exists an element $g^{-1} \in \G$ such that 
$g^{-1}g = \id_{\Src(g)}$ and $gg^{-1} = \id_{\Trg(g)}.$
\end{enumerate}
\end{enumerate}
A groupoid $\G \rightrightarrows \B$ is called a \textit{Lie groupoid} if $\G, \B$ are manifolds, the source and target are submersions, and the maps $(g, h) \mapsto gh$, $x \mapsto \id_x$, and $g \mapsto g^{-1}$ are 
smooth. (The domain of the multiplication map is $\{(x,y) \in \G \times \G \mid \Src(x) = \Trg(y)\}$. The submersion property of source and target ensures that this set is a submanifold of  $\G \times \G$, so smoothness of multiplication is well-defined.)
\end{definition}
\begin{example}\label{groupoids}
\begin{enumerate}[label=(\alph*)] 
\item Any Lie group $G$ is a Lie groupoid over a point.
\item For any smooth manifold $\B$, the set $\G := \B \times \B$ is a Lie groupoid over $\B$, called \textit{the pair groupoid}. The source and target are defined by $\Src(x,y) = x$, $\Trg(x,y) = y$, while the product is given by
$
(y,z)(x,y) := (x,z).
$

\item
Let $\B$ be a smooth manifold, and let $G$ be a Lie group acting on $\B$. Then the \textit{action} Lie groupoid $G \ltimes \B  \rightrightarrows \B$ is defined as follows. The points of $G \ltimes \B$ are triples $(g; x,y)$, where $x,y \in \B$, $g \in G$, and $gx= y$. The source map is given by $\Src(g; x,y) := x$, the target is $\Trg(g; x,y) := y$, and the multiplication is defined by
$
(h; y,z)(g; x,y) := (hg;x,z)\,.
$ 

\end{enumerate}
\end{example}
\begin{definition}\label{def:trans}
A groupoid $\G \rightrightarrows \B$ is called \textit{transitive} if for any $x, y\in \B$ there exists $g \in \G$ such that $\Src(g) = x$ and $\Trg(g) = y$. 
\end{definition}
For example, an action groupoid  $G \ltimes \B$ is transitive if and only if the $G$-action on $\B$ is transitive. 

\begin{definition}
Let $\G \rightrightarrows \B$ be a groupoid.  Then the \textit{source fiber}  $\G_x$ of $\G$ corresponding to $x \in \B$ is the set  $\G_x := \{ g \in \G \mid \Src(g) = x\}$.
\end{definition}
For instance, for an action groupoid $G \ltimes \B$, any source fiber is canonically identified with the group $G$.%

\medskip

\subsection{Lie algebroids}\label{sect:algebroid}
The infinitesimal object corresponding to a Lie groupoid is a \textit{Lie algebroid}.
\begin{definition}
A \textit{Lie algebroid} $\A$ over a manifold $\B$ is a vector bundle $\A \to \B$ endowed with a Lie bracket $\LieBracket$ on smooth sections and a vector bundle map $\# \colon \A \to \T \B$, called the \textit{anchor}, such that for any two smooth sections $\zeta, \eta$ of $\A$ and any smooth function $f \in \Cont^\infty(\B)$, one has
$
[\zeta,f\eta] = f[\zeta,\eta] + (\L_{\#\zeta}   f) \eta\,.
$
\end{definition}
 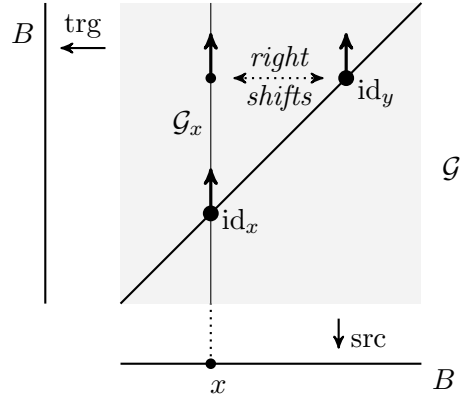
\begin{figure}[t]
\centerline{
\begin{tikzpicture}[thick, scale = 2]
     \fill [opacity = 0.05] (0,0) -- (0,2) -- (2,2) -- (2,0) -- cycle;
     \draw (2,-0.4) -- (0,-0.4);
     \draw (-0.5,0) -- (-0.5,2);
       \draw (0,0) -- (2,2);
            \fill (0.4+0.2,0.4+0.2) circle [radius=1.5pt];
 \draw  [thin] (0.4+0.2,0) -- (0.4+0.2,2);         
     \draw  [dotted] (0.4+0.2,-0.4) -- (0.4+0.2,-0);    
            \node  at (0.6+0.2,0.35+0.2) () {$\id_x$};
             \fill (1.3+0.2,1.3+0.2) circle [radius=1.5pt];
            \node  at (1.5+0.2,1.2+0.2) () {$\id_y$};
             \fill (0.4+0.2,1.3+0.2) circle [radius=1pt];
                 \fill (0.4+0.2,-0.4) circle [radius=1pt];
                      \node  at (0.45+0.2,-0.55) () {$x$};
                         \node  at (0.3+0.15,1.2) () {$\G_x$};
                             \node  at (2.2,0.9) () {$\G$};
 \draw  [dotted, <->] (0.55+0.2,1.3+0.2) -- (1.15+0.2,1.3+0.2);
            \node  at (0.85+0.2,1.4+0.23) () {\it right};
            \node  at (0.85+0.2,1.2+0.17) () {\it shifts};
 \draw  [very thick, ->] (0.4+0.2,0.4+0.2) -- (0.4+0.2,0.7+0.2);
 \draw  [very thick, ->] (0.4+0.2,1.3+0.2) -- (0.4+0.2,1.6+0.2);
 \draw  [very thick, ->] (1.3+0.2,1.3+0.2) -- (1.3+0.2,1.6+0.2);
  \draw  [->] (1.45,-0.1) -- (1.45,-0.3);
            \node  at (1.65,-0.25) () {$\rm src$};
    \draw  [->] (-0.1,1.7) -- (-0.4, 1.7);
            \node  at (-0.25,1.85) () {$\rm trg$};
            \node  at (2.15,-0.5) () {$B$};
            \node  at (-0.65,1.8) () {$B$};
\end{tikzpicture}
}
\caption{A groupoid $\G \rightrightarrows \B$. The
vertical projection is the source map $\Src \colon \G \to B$, the  horizontal
projection is the target map  $\Trg \colon \G \to B$, while horizontal arrows are right translations. A section of the algebroid is a
collection of vertical vectors attached to the diagonal $\Src = \Trg$.
}
\label{fig:algebroid}
\end{figure}

{The Lie algebroid $\A \to \B$ corresponding to a Lie groupoid $\G \rightrightarrows \B$} is constructed as follows.
The fiber of $\A$ over $x \in B$ is the tangent space to the source fiber $\G_x$ at the identity $\id_x$. The 
{ anchor map} on that fiber is defined as the differential of the target map $\Trg \colon \G_x \to \B$, while the 
{ bracket on sections} is defined as follows. Every section of $\A$ can be uniquely extended to 
a right-invariant vector field on $\G$ tangent to source fibers, and the correspondence between such vector 
fields and sections of $\A$ is a vector space isomorphism (see Figure \ref{fig:algebroid}). This allows one to define the bracket of sections 
of $\A$ as the Lie bracket of the corresponding right-invariant vector fields (which is again  
a right-invariant vector field tangent to source fibers, and, therefore, corresponds to a section of $\A$).

\begin{example}\label{algebroids}For Lie groupoids of Example \ref{groupoids}, the corresponding algebroids are:
\begin{enumerate}[label=(\alph*)] 
\item
The Lie algebra $\g$ of the group $G$, considered as a Lie algebroid over a point. { The anchor map is trivial, while the bracket on sections (which are simply elements of $\g$) is just the bracket on $\g$.}
\item The tangent bundle $\T B$ of  $B$. The corresponding bracket on sections is the standard Lie bracket of vector fields, while the anchor map is the identity.
\item
The \textit{action Lie algebroid} $\g \ltimes B$, where $\g$ is the Lie algebra of the group $G$. As a vector bundle, $\g \ltimes B$ is a trivial bundle over $B$ with fiber $\g$. The anchor map $\g \ltimes B \to \T B$ is defined for an element $(u,x) \in \g \ltimes B$  by $\#(u, x) = \rho_u({x}) $, where $\rho_u$ is the infinitesimal generator of the $G$-action corresponding to $u \in \g$. The bracket of sections is given by \begin{equation}\label{actionBracket}
[\zeta,\eta](x) := [\zeta(x), \eta(x)]_\g + (\L_{\#\zeta}\eta)(x)- (\L_{\#\eta}\zeta)(x)\,,
\end{equation}
where $\LieBracket_\g$ is the bracket in $\g$, and the derivatives $\L_{\#\zeta}\eta$, $\L_{\#\eta}\zeta$ are defined by identifying sections of $\g \ltimes B $ with $\g$-valued functions on $B$.\footnote{{It is useful to compare this bracket to that of a semidirect product Lie algebra $\tilde \g:=\g\ltimes B$, where the group of the Lie algebra $\g$ acts 
on a vector space $B$ (e.g. the Lie algebra for the group of affine transformations of $B$, the semidirect product of linear transformations and translations). The Lie bracket of $\tilde \g$ between two elements 
$(u,\alpha), (v,\beta)\in \g \ltimes B $ is $([u,v]_\g,   {\rm ad}_u\beta -{\rm ad}_v\alpha)\,.$} }
\end{enumerate}
\end{example}
\begin{definition}
A {\it Lie algebroid} $\A \to \B$ is called {\it transitive} if the anchor map is surjective.
\end{definition}
The Lie algebroid associated with a transitive Lie groupoid is transitive. 

\medskip

\subsection{Dual Lie algebroids as Poisson vector bundles}\label{sect:poisson_br}
Recall that the dual space $\g^*$ of any Lie algebra $\g$ carries a natural linear Poisson structure. This result extends to the algebroid setting: the dual of a Lie algebroid is a \textit{Poisson vector bundle}.

%
%
%
%
%

\begin{definition}
A \textit{Poisson vector bundle} $ E \to B$ is a vector bundle whose total space $E$ is endowed with a fiberwise linear Poisson structure, i.e. a Poisson structure such that the bracket of any two fiberwise linear functions is again a fiberwise linear function.
\end{definition}
Two basic examples of Poisson vector bundles are a vector space endowed with a linear Poisson structure (which is a Poisson vector bundle over a point), and the cotangent bundle of a manifold $B$. These Poisson vector bundles are dual, respectively, to Lie algebroids $\g$ and $\T B$
from Examples \ref{algebroids}(a) and \ref{algebroids}(b). 
For general Lie algebroids, one has the following result.
\begin{proposition} 
The dual bundle\footnote{If the fibers of $\A$ are infinite-dimensional, then the fibers of $\A^*$ consist of sufficiently regular functionals on fibers of $\A$. In the hydrodynamical setting we will make this precise below.} $\A^* \to B$ of any Lie algebroid $\A \to B$ has a natural structure of a Poisson vector bundle. The Poisson structure on $\A^*$ is uniquely determined by requiring that for arbitrary fiberwise linear functions $\zeta ,\eta$ and an arbitrary fiberwise constant function~$f$, one has
$
\{ \zeta,\eta\} := [\zeta,\eta]$, $ \{\zeta, f\} :=\L_{\#\zeta}  f$. 
Here we identify fiberwise linear functions on $\A^*$ with sections of $\A$, and fiberwise constant functions on $\A^*$ with functions on the base~$B$.
\end{proposition}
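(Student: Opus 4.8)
The plan is to construct the Poisson bracket on $\A^*$ explicitly from the prescribed generators and then check the Poisson axioms. First I would recall the structure: a function on the total space $\A^*$ that is polynomial along fibers is generated, as a $\Cont^\infty(\A^*)$-algebra, by fiberwise linear functions (identified with sections of $\A$) and fiberwise constant functions (identified with $\Cont^\infty(B)$). So a bracket is determined on all of $\Cont^\infty(\A^*)$ — or at least on the dense subalgebra of fiber-polynomial functions — once it is specified on these two types of generators and declared to be a biderivation. Thus the content of the proposition is (i) that the prescription $\{\zeta,\eta\}=[\zeta,\eta]$, $\{\zeta,f\}=\L_{\#\zeta}f$, $\{f,g\}=0$ is consistent, i.e. extends to a well-defined biderivation, and (ii) that the resulting bracket is skew-symmetric and satisfies the Jacobi identity, and (iii) that it is fiberwise linear in the required sense.

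For (i), the subtlety is the $\Cont^\infty(B)$-module structure: a section of $\A$ multiplied by a function on $B$ is again a section, so the bracket of two linear functions must be compatible with multiplication by fiberwise constants. Concretely, I would verify that the Leibniz rule forces $\{\zeta, f\eta\} = f\{\zeta,\eta\} + \{\zeta,f\}\eta = f[\zeta,\eta] + (\L_{\#\zeta}f)\eta$, and this matches the Lie algebroid identity $[\zeta,f\eta]=f[\zeta,\eta]+(\L_{\#\zeta}f)\eta$ stated in the definition of a Lie algebroid — so the defining axiom of a Lie algebroid is precisely what makes the bracket well-defined. This is the conceptual heart of the statement and the one place where algebroid-specific structure is used; everything else is formal. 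For (ii), skew-symmetry is immediate on generators (the algebroid bracket is skew, $\{f,g\}=0=-\{g,f\}$, and $\{f,\zeta\}:=-\L_{\#\zeta}f$) and extends to biderivations. The Jacobi identity is checked on triples of generators: $(\zeta,\eta,\xi)$ reduces to the Jacobi identity of the algebroid bracket $\LieBracket$; $(\zeta,\eta,f)$ reduces to the fact that $\#$ is a Lie algebra homomorphism into vector fields, i.e. $\#[\zeta,\eta]=[\#\zeta,\#\eta]$ — which is itself a consequence of the algebroid axioms and the Jacobi identity, so I would either cite it or derive it; $(\zeta,f,g)$ and $(f,g,h)$ are trivial since $\L_{\#\zeta}$ is a derivation and constants bracket to zero. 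Once Jacobi holds on generators, it holds everywhere by the standard argument that the Jacobiator is a triderivation. For (iii), fiberwise linearity is built in: the bracket of two fiberwise linear functions is $[\zeta,\eta]$, a section of $\A$, hence fiberwise linear — so the Poisson vector bundle condition holds by construction.

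In the infinite-dimensional hydrodynamical setting I would add the caveat (already flagged in the paper's footnote) that $\A^*$ must be replaced by an appropriate space of sufficiently regular functionals, and that the fiber-polynomial functions must be dense enough that a biderivation is determined by its values on linear and constant functions; this is a standard but non-vacuous point that the paper promises to make precise later, so here I would only remark that the algebraic computation goes through verbatim once the functional-analytic framework is fixed.

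The main obstacle, such as it is, is step (i): making precise that "uniquely determined by requiring ..." is a genuine existence-and-uniqueness claim, which hinges on recognizing that the Leibniz-rule-induced relation among $\{\zeta,f\eta\}$, $f$, $\{\zeta,\eta\}$, and $\{\zeta,f\}$ is exactly the Lie algebroid compatibility axiom $[\zeta,f\eta]=f[\zeta,\eta]+(\L_{\#\zeta}f)\eta$. After that recognition, verifying Jacobi on generators is a short and purely formal computation, and I would present it compactly rather than grinding through all four cases in detail.
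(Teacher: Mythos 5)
Your proof is correct, and it is the standard argument: the paper itself states this proposition without proof, deferring to the general references on Lie algebroids, so there is nothing to compare against. You correctly identify the two genuinely non-formal points — that the Leibniz-rule consistency for $\{\zeta,f\eta\}$ is exactly the algebroid axiom $[\zeta,f\eta]=f[\zeta,\eta]+(\L_{\#\zeta}f)\eta$, and that the $(\zeta,\eta,f)$ case of Jacobi requires $\#[\zeta,\eta]=[\#\zeta,\#\eta]$, which indeed follows from the Leibniz axiom together with the Jacobi identity of the bracket on sections (so citing or deriving it, as you propose, is legitimate); the remaining steps (skew-symmetry, the Jacobiator being a triderivation, fiberwise linearity of $[\zeta,\eta]$) are formal as you say.
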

In what follows, we will need the following explicit formula for the Poisson structure on  a Lie algebroid dual. 
%

%
%
%
%
%
%
%
%
%
%
%
%
%
%
%
%
%
%
%
%
%
%

\begin{proposition}\label{poissonBracketFormula} {\rm\cite{boucetta2011}} Let $\A$ be a Lie algebroid. Then,   for any $ \alpha \in \A^*$ 
and for any smooth functions $f, g \in \Cont^\infty(\A^*)$, one has
{{
\begin{align}\label{PoissonExplicit}
\begin{gathered}
\{f, g\}(\alpha) =\langle  \alpha, [\diff ^Ff(\hat \alpha), \diff^Fg(\hat \alpha)]\vphantom{ \hat \alpha} \rangle 
+ \L_{\#\diff^Ff(\alpha)}  \left(g \circ \hat \alpha - \langle\hat \alpha, d^Fg(\hat \alpha) \rangle\right)\\
- \,\L_{\#\diff^Fg(\alpha)}  \left(f \circ \hat \alpha - \langle\hat \alpha, d^Ff(\hat \alpha) \rangle\right),
\end{gathered}
\end{align}
}where $\hat \alpha $ is an arbitrary section of $\A^*$ extending $\alpha$, 
 and $\diff^Ff(\alpha), \diff^Fg(\alpha) \in \A$ are fiber-wise differentials of $f$ and $g$ at $\alpha$ (i.e. differentials restricted to the tangent space of the fiber of $\alpha \in \A^*$).  
}
\end{proposition}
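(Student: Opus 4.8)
The plan is to verify formula \eqref{PoissonExplicit} by reducing it to the defining properties of the Poisson structure stated in the preceding proposition, namely $\{\zeta,\eta\}=[\zeta,\eta]$ on fiberwise linear functions and $\{\zeta,f\}=\L_{\#\zeta}f$ when $f$ is fiberwise constant, together with the Leibniz rule. First I would observe that both sides of \eqref{PoissonExplicit} are bi-derivations in $f$ and $g$, so it suffices to check the identity on a set of functions whose differentials span, at the given point $\alpha$, all of $T_\alpha \A^*$ together with enough of $\Cont^\infty(\A^*)$ to pin down the bracket; concretely, on functions of the form $\zeta$ (fiberwise linear, identified with a section of $\A$) and $f\circ \pi$ (pullbacks of functions on the base $B$, i.e. fiberwise constant). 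The key point is that $\diff^F(\zeta)(\alpha) = \zeta$ is the section itself (independent of $\alpha$), while $\diff^F(f\circ\pi)=0$; plugging these into the right-hand side of \eqref{PoissonExplicit} should collapse the correction terms and reproduce exactly $[\zeta,\eta]$, $\L_{\#\zeta}f$, and $0$ in the three respective cases.

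Next I would handle a general pair $f,g$ by Taylor-expanding each to first order along the fiber around $\alpha$: write $f = f(\alpha) + (\ell_{\diff^Ff(\alpha)} - \langle \hat\alpha, \diff^F f(\hat\alpha)\rangle) + (\text{quadratic remainder})$, where $\ell_\zeta$ denotes the fiberwise linear function attached to a section $\zeta$, and similarly for $g$. The combination $g\circ\hat\alpha - \langle\hat\alpha, \diff^F g(\hat\alpha)\rangle$ appearing in \eqref{PoissonExplicit} is precisely the fiberwise-constant part of $g$ in this expansion (its value is the function on $B$ obtained by subtracting from $g\circ\hat\alpha$ its linearization), which is why the anchor-derivative terms involve exactly that expression. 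One then computes $\{f,g\}(\alpha)$ using bilinearity plus the Leibniz rule: the linear-times-linear term contributes $\langle\alpha,[\diff^Ff(\hat\alpha),\diff^Fg(\hat\alpha)]\rangle$ via $\{\zeta,\eta\}=[\zeta,\eta]$ evaluated at $\alpha$; the linear-times-constant terms contribute the two Lie-derivative terms via $\{\zeta,f\}=\L_{\#\zeta}f$; the constant-times-constant term vanishes; and the quadratic remainders, being second order along the fiber, contribute nothing to the value of the bracket at $\alpha$ because the Poisson bivector is linear (a derivation lowering fiber-degree by one, so a product of two $\geq 2$-order terms lands in order $\geq 1$ but its value at $\alpha$ along a fiberwise vanishing-to-second-order function is zero — this needs a short, careful argument).

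The main obstacle, and the step I would spend the most care on, is making rigorous the claim that the second-order remainder terms do not contribute to $\{f,g\}(\alpha)$, and that the correction terms in \eqref{PoissonExplicit} are well-defined independently of the choice of extension $\hat\alpha$ of $\alpha$. For independence of $\hat\alpha$: changing $\hat\alpha$ alters $g\circ\hat\alpha - \langle\hat\alpha,\diff^Fg(\hat\alpha)\rangle$ only by a function on $B$ vanishing to second order at $\pi(\alpha)$ in the fiber directions — but this is subtler since $\hat\alpha$ is a section, not a fiber coordinate, so I would argue that two extensions differ by a section vanishing at $\pi(\alpha)$, and that such a difference, after the linearization subtraction, produces a term on $B$ whose differential at $\pi(\alpha)$ vanishes, hence is killed by $\L_{\#\diff^Ff(\alpha)}$. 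A clean way to organize all of this is to fix local coordinates $x^i$ on $B$ and a local frame $e_a$ of $\A$ with dual frame $e^a$ and fiber coordinates $p_a$ on $\A^*$, write the structure functions $[e_a,e_b] = c_{ab}^c e_c$ and $\#e_a = \#_a^i \partial_{x^i}$, deduce from the defining proposition that $\{p_a,p_b\} = c_{ab}^c p_c$ and $\{p_a, x^i\} = \#_a^i$ and $\{x^i,x^j\}=0$, and then simply expand \eqref{PoissonExplicit} in these coordinates and match — the coordinate computation is routine once the invariant bookkeeping above is understood, and it also transparently confirms the infinite-dimensional hydrodynamical case provided one restricts to the regular functionals for which $\diff^F$ makes sense.
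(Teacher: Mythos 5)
Your proposal is essentially correct, but note that the paper does not prove this proposition at all: it is quoted from the cited reference [boucetta2011] and used as a black box (the authors only remark afterwards that in the infinite-dimensional hydrodynamical setting they will justify the formula by deriving an explicit bracket directly). So any proof you give is "extra" relative to the paper; what matters is whether yours is sound, and it is, with two small caveats. First, the opening claim that "both sides are bi-derivations, so it suffices to check on generators" is slightly circular: the right-hand side of \eqref{PoissonExplicit} involves the section $\diff^F f(\hat\alpha)$, i.e.\ fiberwise differentials of $f$ at points \emph{near} $\alpha$, so it is not a priori a function of $df(\alpha)$ and $dg(\alpha)$ only — that independence (of $\hat\alpha$ and of higher jets) is part of what must be proved, and it is your Taylor-expansion/coordinate computation, not the bi-derivation remark, that actually establishes it. Second, the remainder argument is cleaner than you anticipate: writing $f=(f\circ\hat\alpha-\langle\hat\alpha,\diff^Ff(\hat\alpha)\rangle)\circ\pi+\ell_{\diff^Ff(\hat\alpha)}+r$, the remainder $r$ vanishes identically on the image of $\hat\alpha$ and has vanishing fiberwise differential along it, so its \emph{full} differential at $\alpha$ is zero; hence $\{r,\cdot\}(\alpha)=0$ by the derivation property alone, with no need to invoke linearity of the bivector or degree counting. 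With those adjustments the generator computation ($\{\ell_\zeta,\ell_\eta\}=\langle\cdot,[\zeta,\eta]\rangle$, $\{\ell_\zeta,h\circ\pi\}=\L_{\#\zeta}h$, $\{h\circ\pi,k\circ\pi\}=0$) reproduces the three terms of \eqref{PoissonExplicit} exactly, and the local-frame version with $\{p_a,p_b\}=c^c_{ab}p_c$, $\{p_a,x^i\}=\#^i_a$, $\{x^i,x^j\}=0$ confirms it; of course this argument is genuinely finite-dimensional (or requires the regularity restrictions you mention) and does not by itself cover the Fr\'echet setting the paper ultimately needs.
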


This formula can be used as a definition in the infinite-dimensional case. Although for general infinite-dimensional algebroids  it is not even clear why this expression makes sense, 
 we prove it below by obtaining an explicit formula in the setting of multiphase diffeomorphism groupoids.
 
 { Also note that for an action algebroid (see Example \ref{algebroids}c) formula \eqref{PoissonExplicit} becomes
 \begin{align}\label{PoissonExplicit2}
\{f, g\}(\alpha) =\langle  \alpha, [\diff ^Ff(\alpha), \diff^Fg(\alpha)]\vphantom{ \hat \alpha} \rangle + q(f,g) - q(g,f),
\end{align}
 where
 \begin{equation}\label{eq:qterm}
 q(f,g) := \langle \alpha, \L_{\#\diff^Ff(\alpha)}\diff^Fg(\hat \alpha) \rangle +  \L_{\#\diff^Ff(\alpha)}  \left(g \circ \hat \alpha - \langle\hat \alpha, d^Fg(\hat \alpha) \rangle\right).
 \end{equation}
 }
\medskip
\subsection[Euler-Arnold equations on Lie algebroids]{Euler-Arnold equations on Lie algebroids}\label{section:algsub}
Let $\A \to B$ be a  finite- or infinite-dimensional Lie algebroid, and let $\I \colon \A \to \A^*$ be an invertible 
bundle morphism. ({In the infinite-dimensional case one needs to consider the smooth dual bundle $\A^*$,
similarly to consideration of smooth duals of infinite-dimensional Lie algebras, cf.~\cite{AK}. In the hydrodynamical 
setting we define this smooth dual in detail in Section \ref{sec:kinematics}.})
We call such $\I$ an \textit{inertia operator}. An inertia operator $\I$ defines a metric on $\A$ given by
$$
 \langle u,v\rangle_\A := \langle \I(u), v \rangle \,
$$
for any $u ,v$ in the same fiber of $\A$.
Since the inertia operator $\I$ is invertible, one also has a dual metric on $\A^*$:
$$
\langle \alpha, \beta \rangle_{\A^*} :=   \langle \I^{-1}(\alpha), \beta \rangle = \langle  \I^{-1}(\alpha), \I^{-1}(\beta) \rangle_\A
$$
for any $\alpha, \beta$ in the same fiber of $\A^*$. 
Consider also a function $\H \in \Cont^\infty( \A^* )$ defined by
\begin{equation}\label{Hamiltonian}
\H(\alpha) := \frac{1}{2}\langle \alpha, \alpha \rangle_{\A^*}\quad \forall \,\alpha \in \A^*.
\end{equation}

\begin{definition}
The Hamiltonian equation associated with the Poisson structure on $\A^*$ and
the function $\H$ is called the \textit{groupoid Euler-Arnold equation} corresponding to the metric~$\metric_\A$.
\end{definition}
\begin{example}
When $\A$ is a Lie algebra, we obtain the standard notion of an Euler-Arnold equation on a Lie algebra dual. When $\A = \T B$ is the tangent bundle of a manifold $B$, the Euler-Arnold equation is the geodesic equation on $B$. { (The latter is, of course, a second order equation on $B$, but it becomes first order -- specifically, the algebroid Euler-Arnold equation -- if we interpret it as an equation on $TB$.)} 
\end{example}
\begin{remark}\label{rem:idofmetrics}
In the case when the algebroid $\A$ is associated with a certain Lie groupoid $\G$, solutions of the Euler-Arnold equation can be interpreted as geodesics of a right-invariant source-wise (i.e. defined only for vectors tangent to source fibers) metric on $\G$. In the transitive case those solutions can also be thought of as geodesics on any source fiber $\G_x$.
\end{remark}
Furthermore, an Euler-Arnold equation on a transitive algebroid $\A \to \B$ always gives rise to a certain geodesic flow on the base $\B$. Indeed, let $\A \to \B$ be a Lie algebroid.
Then, since the anchor map $\# \colon A \to \T \B$ is an algebroid morphism, the dual map $\#^* \colon \T^*\B \to \A^*$ is Poisson. 
Note that if, moreover, the algebroid $\A$ is transitive, then $\#^*(\T^* \B)$ is a symplectic leaf 
in $\A^*$. Indeed, if $\A$ is transitive, then  the Poisson map $\#^*$ is injective, while  the image of 
a closed injective Poisson map of a symplectic manifold is always a symplectic leaf. 

\begin{proposition}\label{prop:sub} 
Let $\A \to \B$ be a transitive Lie algebroid, and let $\metric_\A$ be a positive-definite metric on $\A$ 
for an invertible inertia operator $\I \colon \A \to \A^*$. Assume also\footnote{Note that this property is automatic in the finite-dimensional case.} that for this metric $\metric_\A$  there  is an orthogonal decomposition 
$\A = \Ker \# \oplus (\Ker \#)^\bot$.  Then the following holds:
\begin{enumerate}
\item The pullback of the groupoid Euler-Arnold flow corresponding to the metric $\metric_\A$ from the symplectic leaf $\#^*(\T^*\B)$ to $\T^*\B$ is the geodesic flow for a certain metric $\metric_\B$ on $\B$. 
Explicitly, for any $x \in \B$ and any $\zeta, \eta \in \T_x\B$, the metric $\metric_B$ reads \begin{align}\label{inducedMetric}
\langle \zeta, \eta \rangle_\B = \langle \#^{-1}(\zeta),  \#^{-1}(\eta)\rangle_\A\,,
\end{align}
where $\#^{-1} \colon \T\B \to (\Ker \#)^\bot$ is the inverse for the restriction of the anchor map to $(\Ker \#)^\bot$.

\item  Assume, in addition, that the algebroid $\A$ corresponds to a certain transitive groupoid $\G$. Then, for every $x \in \B$, the target mapping $\Trg \colon (\G_x, \metric_\G) \to (\B, \metric_\B)$ is a Riemannian submersion. (Here the metric $\metric_\G$ on $\G_x$ is defined using the identification between metrics on $\A$ and right-invariant source-wise metrics on $\G$, see Remark \ref{rem:idofmetrics}.)
\end{enumerate}
\end{proposition}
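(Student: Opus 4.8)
\textbf{Proof plan for Proposition \ref{prop:sub}.}

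The plan is to treat parts (1) and (2) as two complementary descriptions of the same geometric picture: part (1) is a purely Poisson-geometric / variational statement about the symplectic leaf $\#^*(\T^*\B)$, while part (2) is the groupoid-level translation of it via the dictionary between metrics on $\A$ and right-invariant sourcewise metrics on $\G$ from Remark \ref{rem:idofmetrics}. The orthogonal splitting hypothesis $\A = \Ker \# \oplus (\Ker \#)^\bot$ is what makes everything work, so I would isolate its consequences first.

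\emph{Step 1: the well-definedness of $\metric_\B$.} Since $\A$ is transitive the anchor $\#$ is surjective, so its restriction to $(\Ker \#)^\bot$ is a fiberwise linear isomorphism onto $\T\B$; call its inverse $\#^{-1}$. Formula \eqref{inducedMetric} then defines a genuine positive-definite metric on $\B$ because $\metric_\A$ is positive-definite. I would also record that $\#^{-1}$ is precisely the horizontal lift: a tangent vector $\zeta \in \T_x\B$ lifts to the unique $\#^{-1}(\zeta) \in (\Ker \#)^\bot \subset \A_x$ with $\#(\#^{-1}(\zeta)) = \zeta$, and by construction of $\metric_\B$ this lift is an isometry onto its image. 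This is the algebroid shadow of the statement that the anchor, restricted to the horizontal bundle, is a metric isomorphism.

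\emph{Step 2: identifying the pullback Hamiltonian on $\T^*\B$.} The dual anchor $\#^* \colon \T^*\B \to \A^*$ is Poisson (stated in the excerpt) and, by transitivity, a closed injective Poisson immersion whose image is the symplectic leaf $\#^*(\T^*\B)$; hence the restriction of the groupoid Euler-Arnold flow for $\H(\alpha) = \tfrac12\langle\alpha,\alpha\rangle_{\A^*}$ to that leaf pulls back to the Hamiltonian flow on $\T^*\B$ of the function $\H \circ \#^*$. The key computation is that $\H \circ \#^*$ is the Hamiltonian of the cometric dual to $\metric_\B$. Concretely, for $p \in \T_x^*\B$ one has $\langle \#^*p, \#^*p\rangle_{\A^*} = \langle \I^{-1}\#^*p, \I^{-1}\#^*p\rangle_\A$, and the orthogonal decomposition forces $\I^{-1}\#^* p$ to lie in $(\Ker\#)^\bot$ (because $\#^* p$ annihilates $\Ker\#$ and $\I$ respects the orthogonal splitting, $\I$ sending $(\Ker\#)^\bot$ to its annihilator and $\Ker\#$ to $(\Ker\#)^\circ{}^\perp$-type complement); chasing the dualities shows $\I^{-1}\#^* p = \#^{-1}(\zeta)$ where $\zeta$ is the metric-$\metric_\B$-dual of $p$. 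Therefore $\H\circ\#^*(p) = \tfrac12\langle\zeta,\zeta\rangle_\B$ is the standard kinetic-energy Hamiltonian on $\T^*\B$, whose flow is the geodesic flow of $\metric_\B$. This yields part (1). The main obstacle here is the careful bookkeeping of how the inertia operator $\I$ interacts with the splitting and its annihilators — in infinite dimensions one must also invoke the hypothesis that the orthogonal decomposition actually exists and is respected by $\I$, which is exactly what the footnote flags; I would state this as the one genuinely delicate point.

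\emph{Step 3: the Riemannian submersion.} For part (2), fix $x \in \B$ and transport the picture to the source fiber $\G_x$. Right-invariance identifies $\T_g\G_x \cong \A_{\Src(g)} = \A_x$ (via right translation $R_{g^{-1}}$), and under this identification the metric $\metric_\G$ is just $\metric_\A$ at every point, while the differential of $\Trg\colon\G_x\to\B$ at $g$ is $\#$ (by the defining property of the anchor as $d\,\Trg$ on the source fiber, intertwined with right translation). So at each $g$ the tangent map of $\Trg$ is, after the right-translation identifications, exactly the anchor $\#\colon \A_x \to \T_{\Trg(g)}\B$; its kernel is $R_{g*}(\Ker\#)$, the vertical space, and the horizontal space is $R_{g*}\bigl((\Ker\#)^\bot\bigr)$. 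On this horizontal space $d\,\Trg$ is, by Step 1, an isometry onto $(\T_{\Trg(g)}\B,\metric_\B)$. That is the definition of a Riemannian submersion. I would close by noting that this also re-proves part (1) "geodesically": horizontal geodesics on $\G_x$ project to geodesics on $\B$, matching the Hamiltonian statement of Step 2. The only thing to be slightly careful about is that $\Trg\colon\G_x\to\B$ is a surjective submersion — which follows from transitivity of $\G$ — so that the notion of Riemannian submersion is non-vacuous.

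\emph{Expected main difficulty.} The conceptual content is light; the real work is Step 2, matching $\H\circ\#^*$ with the $\metric_\B$-cometric through the chain of dualizations $\#$, $\#^*$, $\I$, $\I^{-1}$, and verifying that the orthogonal-splitting hypothesis makes all the relevant maps block-diagonal so that the "horizontal part" of $\I^{-1}\#^*$ is the only part. Everything else — well-definedness, the submersion property — is a direct unwinding of definitions once the right-invariant identifications are set up.
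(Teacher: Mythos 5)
The paper itself does not prove Proposition \ref{prop:sub}; it refers the reader to \cite{IK}, so there is no in-paper argument to compare against. Your plan is correct and is the expected one: the duality chase in Step 2 (that $\#^*p$ annihilates $\Ker\#$, hence $\I^{-1}\#^*p$ lies in $(\Ker\#)^\bot$ by the orthogonal splitting and positive-definiteness, and equals $\#^{-1}\zeta$ for $\zeta$ the $\metric_\B$-dual of $p$) correctly identifies $\H\circ\#^*$ with the kinetic cometric Hamiltonian, and Step 3 is the right unwinding of the submersion definition. One concrete slip in Step 3: right translation by $g^{-1}$ identifies $\T_g\G_x$ with $\A_{\Trg(g)}$, not with $\A_{\Src(g)}=\A_x$ (the right-invariant extension of a section $\zeta$ of $\A$ takes the value $R_{g*}\zeta(\Trg(g))$ at $g$, and $\Trg\circ R_g=\Trg$ gives $d\Trg\circ R_{g*}=\#$ on $\A_{\Trg(g)}$); as written, your anchor $\#\colon\A_x\to\T_{\Trg(g)}\B$ has mismatched base points. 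The fix is purely notational and does not affect the argument.
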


For the proof see \cite{IK}.

\begin{example}\label{groupoidOverDensities} 
Let $M$ be a Riemannian manifold. Consider the natural transitive action of its diffeomorphism group $\Diff(M)$ on the space $\W(M)$ of densities on $M$ of unit total mass, and let $\Diff(M) \ltimes \W(M)$ be the corresponding action groupoid (see Example \ref{groupoids}(c)). Define a metric on the corresponding action algebroid $\vect(M) \ltimes \W(M)$ by setting
$$
\langle u, v \rangle_{L^2} := \int_M (u,v)\mu
$$
for $u, v$ lying in the fiber of $\vect(M) \ltimes \W(M)$ over $\mu \in \W(M)$. (Recall that the fibers of $\vect(M) \ltimes \W(M)$ are identified with the Lie algebra $\vect(M)$, see Example \ref{algebroids}(c).) Then,  according to Remark \ref{rem:idofmetrics}, for any $\mu \in \W(M)$, there is a corresponding metric on the source fiber $(\Diff(M) \ltimes \W(M))_\mu = \Diff(M)$. 
It is an $L^2$-type metric on $\Diff(M)$, and there is a Riemannian submersion of that metric onto Wasserstein metric on $\W(M)$, see~\cite{Otto} and  Remark \ref{rem:Was} below.  
\end{example}

\begin{example} 
Another example is given by the metric on the space $\VS(M)$ of vortex sheets in a manifold $M$, discussed in \cite{Loesch, IK} and in  Appendix below. 
In that case one considers  the Lie  groupoid $\DSDiff(M)$ of volume-preserving diffeomorphisms of a manifold $M$ that are discontinuous along a hypersurface. Its Lie algebroid $\dsvect(M) \to \VS(M)$ consists of  velocities  of the fluid with a vortex sheet: given a vortex sheet $\Sheet$, the corresponding velocities are discontinuous vector fields on $M$ of the form $u = \chiplus u^+ + \chimin u^-\!,$ where $\chiplus, \chimin$ are the indicator functions of connected components $\Dompm$ of $M\, \setminus \,\Sheet$, and $u^\pm$ are smooth divergence-free vector fields on $\Dompm$ such that the restrictions of $u^+$ and $u^-$ to $\Sheet$ have the same normal component, see appendix. %
There is a Riemannian submersion from an $L^2$ metric on $\DSDiff(M)$   to a metric on the space of classical vortex sheets, cf. \cite{Loesch, IK}.
\end{example}

\medskip

\section{Kinematics of multiphase fluids}\label{sec:kinematics}
In this section, $M$ is a compact connected manifold without boundary endowed with a volume form $\vol$. %

\subsection{The Lie groupoid of multiphase diffeomorphisms}\label{sect:diffeo_pair}
In this subsection, we define the Lie groupoid $\mdiff(M)$ of volume-preserving multiphase diffeomorphisms.
This groupoid (or, more precisely, any of its source fibers) can be viewed as the configuration space of a fluid with several phases penetrating through each other. { The conditions defining the groupoid $\mdiff(M)$ can be seen as integration of the corresponding infinitesimal equation \eqref{eq:sde}, just like the group of volume-preserving diffeomorphisms arises from ``integrating'' the divergence-free condition on the corresponding velocity field $u$.}
\par
The base of the groupoid $\mdiff(M)$ is, by definition, the space $\mdens(M)$ of  multiphase densities, i.e. the space of $n$-tuples
$ \bmu:=(\mu_1,...,\mu_n)$, where each $\mu_i \in \Dens(M)$ is a density (top-degree form) on $M$, satisfying the following conditions: 
\begin{enumerate} 
\item $\sum\nolimits_{j=1}^n\mu_j=\vol$. \item $\mu_j>0$ for all $j=1,...,n$ everywhere on $M$. 
\item $\int_M \mu_j = c_j$ for fixed constants $c_1, \dots, c_n \in \R$ {(such that $\sum_{j=1}^n c_j = \int_M \vol$)}.  \end{enumerate}
These densities can be thought of as densities of different
fractions of the fluid, that can penetrate through each other without resistance, subject only to the total incompressibility condition.
The case of two densities, $n=2$, supported on two different sides of the separating hypersurface $\Gamma\subset M$ 
corresponds to the regular vortex sheet $\Gamma$ for an incompressible flow in $M$. This case belongs to the closure of our space of positive densities. The general case with densities $(\mu_1,...,\mu_n)$
corresponds to a multiphase fluid where different phases can percolate through each other. 

The elements of  $\mdiff(M)$ are $n$-tuples of diffeomorphisms of $M$ preserving the above property of incompressibility of multiphase densities,
i.e. the set of tuples $(\bphi\,;\bmu, \bmu'):=(\phi_1, ..., \phi_n;\mu_1, ..., \mu_n, \mu'_1, ..., \mu'_n)$ with multiphase forms $\bmu, \bmu'\in \mdens(M)$ such that the 
multiphase diffeomorphism $\bphi$ push-forwards one of them to the other, $\bphi_*\bmu=\bmu'$ component-wisely. 
The source and the target of $ (\bphi\,;\bmu, \bmu')$ are, by definition, $\bmu$ 
and $\bmu'$ respectively. The multiplication in $\mdiff(M)$ is given by composition of 
 diffeomorphisms:
$$
(\bpsi\,;\bmu', \bmu'')(\bphi\,;\bmu, \bmu') := (\bpsi\bphi\,;\bmu, \bmu'')\,.
$$

 \begin{remark}\label{frechet}
 The groupoid	$\mdiff(M) \rightrightarrows \mdens(M)$  is a transitive Lie-Fr\'echet groupoid. The proof of the Lie-Fr\'echet property is the standard consideration similar to that for groups of diffeomorphisms, cf. \cite{IK}. One can consider a more general case of densities $\mu_j\ge 0$ on $M$, in which case the groupoid is not necessarily transitive. The latter case covers that of the usual vortex sheets $\Gamma\subset M$, corresponding to the indicator densities supported on two different sides of $M\setminus \Gamma$, see Appendix.
 
 Since $\mdiff(M)$ is a Lie-Fr\'echet groupoid, it follows that the corresponding algebroid is well-defined as a Fr\'echet vector bundle over $\mdens(M)$ with a bracket and anchor on smooth sections. We describe this algebroid in detail in the next section.
 \end{remark}

\begin{remark}One can also consider the groupoid $\mdiff(M)$ in the category of Hilbert manifolds modeled on Sobolev $H^s$ 
spaces for sufficiently large $s$, $s > \dim M /2 + 1$, similarly to, e.g., \cite{EbMars70} or~\cite[Remark 3.3]{KLMP}. 
Note also that if $n=1$, i.e. we have a one-phase fluid, with the groupoid $\mdiff(M)$ becoming the group $\SDiff(M)$
of $\mu$-preserving diffeomorphisms of $M$.
\end{remark}

 \begin{remark}\label{subgroupoid}
 Note that if we {drop} the requirement that the densities $\mu_j$ sum to $\vol$ in the definition of the groupoid $\mdiff(M)$,  we obtain the definition 
 of the action groupoid $ \Diff(M)^n \ltimes {\rm Dens}(M)^n$, which is the product of $n$ copies of the action groupoid $ \Diff(M) \ltimes {\rm Dens}(M)$
 (see Example~\ref{groupoids}(c)) corresponding to the natural 
 action of the group $\Diff(M)$ on the space of densities ${\rm Dens}(M)$. So, the action groupoid $ \Diff(M)^n \ltimes {\rm Dens}(M)^n$
 comes with a subgroupoid  $\mdiff(M)$. The 
 subgroupoid $\mdiff(M)$ inherits certain properties of the ambient action groupoid. 
 In particular, the brackets in the algebroids corresponding to these groupoids are given by the same formulas.
 \end{remark}

 \medskip

 \subsection{The Lie algebroid of multiphase vector fields}\label{sect:algebroid_vf}
 In this subsection we describe the Lie algebroid
$\mvect(M) \to \mdens(M)$
corresponding to the Lie groupoid $\mdiff(M)$. This algebroid serves as the space of velocities for a multiphase fluid.

\begin{theorem}\label{algFibers}
The Lie algebroid $ \mvect(M) \to  \mdens(M)$ 
corresponding to the groupoid $\mdiff(M)$ is as follows:
\begin{enumerate}\item
The fiber of $\mvect(M)$ over $\bmu:=(\mu_1,...,\mu_n) \in \mdens(M)$ is the space $\mvect(M, \bmu)$ which consists
 of multiphase vector fields on $M$ of the form 
$\bu := (u_1,...,u_n)$, where 
 $u_{j}\in \vect(M)$ are such that \begin{equation}\label{divFree}\sum\nolimits_{j=1}^n \L_{u_j}\mu_j=0\end{equation} (in other words, the multiphase vector field is ``divergence-free" with respect to the multiphase volume form). 
 
\item  The anchor map  $
 \# \colon \mvect(M, \bmu) \to \T_\bmu \mdens(M)\, $ is given by the negative Lie derivative, 
 $$
 \bu:=(u_1,...,u_n)\mapsto -\L_\bu\bmu:=
 (-\L_{u_1}\mu_1,..., -\L_{u_n}\mu_n)\,.
 $$
 
 \item Let $U, V$ be sections of $\mvect(M)$.
 Then their algebroid bracket is
 \begin{equation}\label{sectionsBracket}
[U,V](\bmu) = [U(\bmu), V(\bmu)] + {\L_{\#U(\bmu)} V - \L_{\#V(\bmu)} U\,,}
\end{equation}
where the first summand in the right-hand side is the usual Lie bracket of vector fields on $M$.
 \end{enumerate}
\end{theorem}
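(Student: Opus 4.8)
The plan is to compute the Lie algebroid directly from the general construction described in Section \ref{sect:algebroid}: its fiber over $\bmu$ is the tangent space at $\id_\bmu$ to the source fiber $\mdiff(M)_\bmu$, the anchor is the differential of the target map, and the bracket on sections is the Lie bracket of the corresponding right-invariant (source-wise) vector fields on the groupoid. Since by Remark \ref{subgroupoid} the groupoid $\mdiff(M)$ is a subgroupoid of the action groupoid $\Diff(M)^n\ltimes\Dens(M)^n$, whose algebroid is the product of $n$ copies of the action algebroid $\vect(M)\ltimes\Dens(M)$ (Example \ref{algebroids}(c)), the bracket and anchor formulas are inherited from the ambient action algebroid; what must be pinned down is the precise subbundle $\mvect(M,\bmu)\subset\vect(M)^n$ cut out by the extra constraint $\sum_j\mu_j=\vol$.

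First I would identify the fiber. A curve $t\mapsto(\bphi_t;\bmu,\bphi_{t*}\bmu)$ in $\mdiff(M)_\bmu$ through $\id_\bmu$ with $\dot\bphi_0=\bu=(u_1,\dots,u_n)$ is admissible precisely when each $\phi_{i,t*}\mu_i$ stays a multiphase density, i.e. sums to $\vol$ for all $t$; differentiating at $t=0$ gives $\sum_j\partial_t|_0(\phi_{j,t*}\mu_j)=-\sum_j\L_{u_j}\mu_j=0$, which is exactly \eqref{divFree}. Conversely, any $\bu$ satisfying \eqref{divFree} is tangent to such a curve (flow each $u_j$), so $\mvect(M,\bmu)$ is as claimed. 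For the anchor, $\Trg(\bphi_t;\bmu,\bphi_{t*}\bmu)=\bphi_{t*}\bmu$, so its differential sends $\bu$ to $\partial_t|_0\bphi_{t*}\bmu=-\L_\bu\bmu=(-\L_{u_1}\mu_1,\dots,-\L_{u_n}\mu_n)\in\T_\bmu\mdens(M)$; one should check this lands in $\T_\bmu\mdens(M)$, which follows since the image $n$-tuple sums to $-\sum_j\L_{u_j}\mu_j=0=\partial_t|_0\vol$ and has vanishing total masses, so it is tangent to the constraints defining $\mdens(M)$.

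For the bracket, I would invoke Remark \ref{subgroupoid}: the inclusion $\mdiff(M)\hookrightarrow\Diff(M)^n\ltimes\Dens(M)^n$ is a morphism of Lie groupoids, hence induces a morphism of algebroids $\mvect(M)\hookrightarrow\bigoplus_{i=1}^n(\vect(M)\ltimes\Dens(M))$ which is a fiberwise-injective vector bundle map covering the inclusion $\mdens(M)\hookrightarrow\Dens(M)^n$. Therefore the bracket of sections of $\mvect(M)$ is the restriction of the bracket of the product action algebroid, and the anchor likewise restricts. The product action algebroid's bracket is the componentwise version of \eqref{actionBracket}, so for sections $U,V$ of $\mvect(M)$,
\[
[U,V](\bmu)=[U(\bmu),V(\bmu)]_{\vect(M)^n}+\L_{\#U(\bmu)}V-\L_{\#V(\bmu)}U,
\]
with the Lie-algebra part being the componentwise Lie bracket of vector fields on $M$ and the derivatives $\L_{\#U}V$ taken by viewing sections as $\vect(M)^n$-valued functions on $\mdens(M)$; this is \eqref{sectionsBracket}. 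I would remark that the bracket is well-defined, i.e. $[U,V]$ again satisfies \eqref{divFree}, either by a direct differentiation of the constraint along the anchor direction or, more cleanly, because $\mvect(M)$ being an algebroid (Remark \ref{frechet}) guarantees closure.

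The main obstacle is making the abstract construction rigorous in the Fr\'echet setting: one must justify that $\mdiff(M)_\bmu$ is genuinely a smooth Fr\'echet manifold with tangent space at $\id_\bmu$ equal to the stated space of vector fields (an implicit-function-theorem / local submersion argument for the constraint $\sum_j\phi_{j*}\mu_j=\vol$, analogous to the classical fact that $\SDiff(M)$ is a submanifold of $\Diff(M)$), and that the identification of sections with right-invariant source-wise vector fields behaves well enough to transport the Lie bracket. This is handled, as the paper indicates in Remark \ref{frechet}, by the same standard arguments used for diffeomorphism groups, together with the observation of Remark \ref{subgroupoid} that everything sits inside the well-understood action groupoid; once that is granted, the computation of fibers, anchor, and bracket is the routine differentiation carried out above.
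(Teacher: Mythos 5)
Your proposal is correct and follows essentially the same route as the paper: identify the fiber by differentiating the constraint $\sum_j\phi_{j*}\mu_j=\vol$ along curves in the source fiber, obtain the anchor as the differential of the target map, and inherit the bracket formula \eqref{sectionsBracket} from the ambient action algebroid $\Vect(M)^n\ltimes\Dens(M)^n$ exactly as in Remark \ref{subAlgBracket}. The one imprecision is the parenthetical ``(flow each $u_j$)'' in the converse direction of the fiber computation: the naive flows $\phi_j^t$ of the $u_j$ generally leave the source fiber for $t\neq 0$, since $\sum_j(\phi_j^t)_*\L_{u_j}\mu_j$ need not vanish away from $t=0$, so the curve must be corrected (the paper likewise only asserts existence of a suitable curve, without offering this particular construction).
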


\begin{remark}\label{connectionInExtBundle} The derivative {$\L_{\#U(\bmu)} V$} is a multiphase vector field  defined by
$$
{\L_{\#U(\bmu)} V} :=\frac{ \hphantom{a}d}{d t}\restrict{t=0}{ V(\bmu(t))}\,,
$$
where $\bmu(t)$ is any smooth curve in $\mdens(M)$ with $\bmu(0) = \bmu$ and the tangent vector at $\bmu$ 
given by ${\#U(\bmu)}$. 
That derivative does not have to lie in $\mvect(M, \bmu)$, but belongs to the bigger
space  $\Vect(M)^n  := \{  (u_1,...,u_n)  \mid u_j\in \Vect(M))\}$
of $n$-tuples of vector fields on $M$ with no condition \eqref{divFree}. %
\end{remark}
\begin{remark}\label{remark:compensation}
Note that the first term on the right-hand side of \eqref{sectionsBracket} is not an element of $\mvect(M, \bmu)$. Indeed, for two multiphase vector fields $\bu$ and $\bv$ satisfying $\sum\nolimits_{i}\L_{u_j}\mu_j=\sum\nolimits_{i}\L_{v_j}\mu_j=0$, their (component-wise) Lie bracket does not necessarily have this property. However, the last two terms do not have this property either (see Remark \ref{connectionInExtBundle}) and  compensate the first term.
  \end{remark}

\begin{remark}\label{subAlgBracket}
The Lie algebroid $\mvect(M)$ is a subalgebroid in the action algebroid $ \Vect(M)^n \ltimes {\rm Dens}(M)^n$ of smooth multiphase vector fields without restrictions acting on volume multiphase forms  (see Remark \ref{subgroupoid}). Because of that  the  bracket in  $\mvect(M)$ automatically has form~\eqref{sectionsBracket} (cf. Example \ref{algebroids}(c)). %
However,  $\mvect(M)$ is \textit{not} an action algebroid. In particular, the fibers of  $\mvect(M)$ are not closed under the Lie bracket of vector fields (see Remark \ref{remark:compensation}) and hence do not have any natural Lie algebra structure.
\end{remark}

\proof[Proof of Theorem~\ref{algFibers}]
We begin with the first statement. By definition, the fiber  of $\mvect(M)$ over $\bmu$ consists of tangent vectors at $\units_\bmu \in \mdiff(M)$ to curves of the form $(\bphi(t); \bmu, \bmu(t) )$, where $\bmu(0) = \bmu$ and $\bphi(0) = (\id, \dots, \id)$. 
The  tangent vector to such a curve is an $n$-tuple of vector fields
$$
u_j =\frac{ \hphantom{}d}{d t}\restrict{t = 0}\phi_j(t) \,\,\in \,\Vect(M)\,.
$$
Also note that
\begin{equation}\label{boundaryRelation}
\sum\nolimits_{j=1}^n\phi_j(t)_* \mu_j = \sum\nolimits_{j=1}^n\mu_j(t) = \vol.
\end{equation}
Differentiating this relation with respect to $t$ at $t = 0$, we get \eqref{divFree}, as needed.
\par
Conversely, given any $n$-tuple of vector fields $u_j \in \Vect(M)$ satisfying \eqref{divFree}, one can construct a curve $\bphi(t)$ in the source fiber $\mdiff(M)_\bmu$ whose tangent vector at $\units_\bmu$ coincides with $\bu$. So, the fiber of $\mvect(M)$ over $\bmu$ is indeed the space $\mvect(M, \bmu)$.

To prove the second statement we need the following. 

\begin{lemma}\label{lemma:vstanspace}  The tangent space  $\T_\bmu \mdens(M)$ is the space of multiphase top-degree forms $\bxi=(\xi_1,..., \xi_n)$ satisfying the following conditions: \begin{enumerate} \item  $\sum\nolimits_{j=1}^n\xi_j=0$ on $M$.
\item $\int_M \xi_j = 0$ for each $i = 1, \dots, n$.
 \end{enumerate}
 \end{lemma}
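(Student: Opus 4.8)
The plan is to recognize $\mdens(M)$ as an open subset of an affine subspace of the Fr\'echet space $\Dens(M)^n$ and read off the tangent space directly; equivalently, to exhibit enough explicit curves through a given point $\bmu$.

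First I would observe that among the three defining conditions of $\mdens(M)$, conditions (1) $\sum_{j=1}^n\mu_j=\vol$ and (3) $\int_M\mu_j=c_j$ are affine constraints, cutting out a closed affine subspace $\mathcal V\subset\Dens(M)^n$ whose direction (model) space is exactly
$$
V_0:=\Bigl\{\,\bxi=(\xi_1,\dots,\xi_n)\in\Dens(M)^n \ \Big|\ \textstyle\sum_{j=1}^n\xi_j=0,\ \ \int_M\xi_j=0\ \ \forall j\,\Bigr\},
$$
while condition (2) $\mu_j>0$ is an open condition. Hence $\mdens(M)$ is open in $\mathcal V$, so it is a Fr\'echet manifold modeled on $V_0$, and $\T_\bmu\mdens(M)=V_0$ for every $\bmu$ — which is precisely the claim.

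To make the argument self-contained (and to avoid any functional-analytic discussion of charts on affine subspaces), I would instead argue with curves. For the inclusion $\T_\bmu\mdens(M)\subseteq V_0$: given a smooth curve $\bmu(t)=(\mu_1(t),\dots,\mu_n(t))$ in $\mdens(M)$ with $\bmu(0)=\bmu$, differentiate conditions (1) and (3) at $t=0$ to get $\sum_j\dot\mu_j(0)=0$ and $\int_M\dot\mu_j(0)=0$; thus $\bxi:=\dot{\bmu}(0)$ lies in $V_0$. For the reverse inclusion, given $\bxi\in V_0$ consider the straight-line curve $\bmu(t):=\bmu+t\bxi$. Linearity of conditions (1) and (3), together with $\sum_j\xi_j=0$ and $\int_M\xi_j=0$, shows that $\bmu(t)$ satisfies (1) and (3) for all $t$; and since $M$ is compact, each $\mu_j$ is bounded below by a positive constant while each $\xi_j$ is bounded, so $\mu_j+t\xi_j>0$ for $|t|$ small, giving (2). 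Hence $\bmu(t)\in\mdens(M)$ for small $t$ and $\dot{\bmu}(0)=\bxi$, so $\bxi\in\T_\bmu\mdens(M)$.

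There is no serious obstacle here; the only points requiring a modicum of care are the use of compactness of $M$ to keep the positivity condition (2) valid along the perturbation, and, in the ``abstract'' version of the argument, the fact that $\mathcal V$ is a closed affine subspace of a Fr\'echet space so that the notion of it being a submanifold with model space $V_0$ is unambiguous. Both are routine, and the curve argument above sidesteps the latter issue entirely.
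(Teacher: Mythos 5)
Your proof is correct and follows essentially the same route as the paper, which simply notes that the two conditions are the infinitesimal versions of the affine constraints $\sum_j\mu_j=\vol$ and $\int_M\mu_j=c_j$. Your additional verification of the reverse inclusion via the straight-line curve (using compactness of $M$ to preserve positivity) just fills in a detail the paper leaves implicit.
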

\begin{proof}
These are infinitesimal versions of the conditions $\sum\nolimits_{i}\mu_j = \vol$ and $\int_M \mu_j = c_j$ respectively.
\end{proof}

 Now, we compute the anchor map (one can also get the formula for the anchor map using that our algebroid is a subalgebroid in the action algebroid, see Remark \ref{subAlgBracket}). Let $\bu\in \mvect(M)_\bmu$. Consider a curve $ (\bphi(t) ; \bmu, \bmu(t) ) \in \mdiff(M)_\bmu$ where 
 $\bmu(t) := \bphi(t)_*\bmu$ and
 whose tangent vector at $\units_\bmu$ is $\bu$. Then, by definition of the anchor map for the algebroid of a Lie groupoid, we have 
 $$
 \#\bu = \frac{ \hphantom{}d}{d t}\restrict{t=0}\Trg(\bphi(t)) =\frac{ \hphantom{}d}{d t}\restrict{t = 0} \bphi(t)_*\bmu = -\L_{\bu}\bmu \,, 
 $$
as desired.\par
Finally, the last statement of the theorem follows from the fact that our algebroid is a subalgebroid in the action algebroid (see Remark \ref{subAlgBracket}) and formula \eqref{actionBracket} for the action algebroid bracket. Thus, Theorem~\ref{algFibers} is proved.
\proofend

 \subsection{The dual algebroid and its tangent space}\label{sect:dual}
 In this subsection, we describe the dual of the Lie algebroid $\mvect(M)$. This space can be viewed as the space of momenta for a multiphase fluid.
 
 \par%
As the dual of $\mvect(M)$, we consider the ``smooth dual bundle'' defined as follows. 
 
 {
 \begin{definition}
 The smooth dual $\mvect(M,\bmu)^*$ of the space $\mvect(M,\bmu)$ is the space of linear functions $f \colon \mvect(M,\bmu) \to \R$ that admit a smooth density, which means that there exist smooth $1$-forms $\alpha_1, \dots, \alpha_n$ such that
\begin{equation}\label{eq:spairing}
 f(\bu) =  \sum\nolimits_{j=1}^n\int_M \alpha_j(u_j)  \mu_j
 \end{equation}
 for all $\bu  \in   \mvect(M,\bmu)$.
 \end{definition}
 Formula \eqref{eq:spairing} defines a surjective linear map $\pi \colon \Omega^1(M)^n \to \mvect(M,\bmu)^*$: each collection $(\alpha_1, \dots, \alpha_n) \in\Omega^1(M)^n $ is mapped to a linear function on $\mvect(M,\bmu)$ defined by \eqref{eq:spairing}.
 \begin{proposition}\label{prop:nnp}
 The kernel of the map  $\pi \colon \Omega^1(M)^n \to \mvect(M,\bmu)^*$ consists of $n$-tuples of the form $(df, \dots, df)$, where $f \in C^{\infty}(M)$. Therefore, we have an isomorphism $$
\mvect(M,\bmu)^*:= \Omega^1(M)^n/\delta(d\Cont^\infty(M)),
$$
where $ \delta \colon \Omega^1(M) \to  \Omega^1(M)^n$ is the diagonal embedding $\delta(\alpha) = (\alpha, \dots, \alpha)$.
 \end{proposition}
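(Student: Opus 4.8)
The plan is to compute $\Ker\pi$ explicitly and then invoke the surjectivity of $\pi$ (already noted in the paper) for the isomorphism. The inclusion $\delta(d\Cont^\infty(M)) \subseteq \Ker\pi$ is a one-line integration by parts: for $f \in \Cont^\infty(M)$ and $\bu = (u_1,\dots,u_n) \in \mvect(M,\bmu)$ one has $(df)(u_j)\,\mu_j = (\L_{u_j}f)\,\mu_j = \L_{u_j}(f\mu_j) - f\,\L_{u_j}\mu_j = d\iota_{u_j}(f\mu_j) - f\,\L_{u_j}\mu_j$, so integrating over the closed manifold $M$ kills the exact terms, and summing over $j$ leaves $-\int_M f \sum_j \L_{u_j}\mu_j = 0$ by the constraint \eqref{divFree}. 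Hence $\pi(df,\dots,df) = 0$.

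For the reverse inclusion I would first change variables. Since each $\mu_j$ is a positive volume form, contraction $u_j \mapsto \beta_j := \iota_{u_j}\mu_j$ is a vector bundle isomorphism, so $\bu \mapsto (\beta_1,\dots,\beta_n)$ identifies $\Vect(M)^n$ with $\Omega^{n-1}(M)^n$. Under this identification the constraint \eqref{divFree} becomes $d(\beta_1 + \dots + \beta_n) = 0$ (Cartan's formula and $d\mu_j = 0$), and, using the pointwise identity $\alpha(u)\,\mu = \alpha \wedge \iota_u\mu$, the functional $\pi(\alpha_1,\dots,\alpha_n)$ becomes $\bu \mapsto \sum_j \int_M \alpha_j \wedge \beta_j$. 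Moreover every admissible tuple $(\beta_1,\dots,\beta_n)$ is obtained by choosing $\beta_1,\dots,\beta_{n-1} \in \Omega^{n-1}(M)$ freely, choosing a closed form $\gamma \in \Omega^{n-1}(M)$ freely, and setting $\beta_n := \gamma - \sum_{j<n}\beta_j$.

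Now suppose $(\alpha_1,\dots,\alpha_n) \in \Ker\pi$. Substituting this parametrization, the kernel condition reads $\sum_{j<n} \int_M (\alpha_j - \alpha_n) \wedge \beta_j + \int_M \alpha_n \wedge \gamma = 0$ for all admissible choices. Taking $\gamma = 0$ and varying the $\beta_j$ one at a time gives $\int_M (\alpha_j - \alpha_n) \wedge \beta_j = 0$ for each $j < n$ and every $\beta_j \in \Omega^{n-1}(M)$; since the wedge $\Omega^1 \times \Omega^{n-1} \to \Omega^n$ is a pointwise-perfect pairing, a bump-function localization forces $\alpha_j = \alpha_n =: \alpha$ for all $j$. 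The condition then collapses to $\int_M \alpha \wedge \gamma = 0$ for every closed $(n-1)$-form $\gamma$. Testing against exact $\gamma = d\tau$ and integrating by parts yields $\int_M d\alpha \wedge \tau = 0$ for all $\tau \in \Omega^{n-2}(M)$, hence $d\alpha = 0$ by the analogous localization argument; and then, since $M$ is compact, connected and oriented, nondegeneracy of the de Rham intersection pairing $H^1(M) \times H^{n-1}(M) \to \R$ (Poincar\'e duality) gives $[\alpha] = 0$, i.e. $\alpha = df$ for some $f \in \Cont^\infty(M)$. Thus $(\alpha_1,\dots,\alpha_n) = \delta(df) \in \delta(d\Cont^\infty(M))$, so $\Ker\pi = \delta(d\Cont^\infty(M))$, and the stated isomorphism follows from surjectivity of $\pi$.

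The routine ingredients are the two pointwise-pairing/localization lemmas. The step I expect to need the most care is the constraint bookkeeping --- verifying that the parametrization above exhausts exactly the fields in $\mvect(M,\bmu)$ --- together with the final passage from ``annihilates every closed $(n-1)$-form'' to ``exact,'' which is the only place where the global topology of $M$ (beyond local analysis) genuinely enters. That last step may alternatively be handled by Hodge theory: choosing an auxiliary Riemannian metric and writing the closed form $\alpha = df + h$ with $h$ harmonic, the form Hodge-dual to $h$ is closed, and pairing it against $\alpha$ forces $\|h\|_{L^2}^2 = 0$, hence $h = 0$.
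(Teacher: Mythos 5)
Your argument is correct, but it establishes the reverse inclusion $\Ker\pi\subseteq\delta(d\Cont^\infty(M))$ by a genuinely different mechanism than the paper. The paper fixes an auxiliary Riemannian metric, takes the unique ``co-closed'' representative $\balpha$ of a coset (Proposition \ref{prop:ccRepr}, which amounts to solving a Poisson equation), notes that $\balpha^\sharp$ then satisfies \eqref{divFree} and so lies in $\mvect(M,\bmu)$, and tests the functional against $\balpha^\sharp$ itself, so that positivity of $\sum_j\int_M(\alpha_j^\sharp,\alpha_j^\sharp)\,\mu_j$ finishes the proof in one line. You instead pass to forms via $\beta_j=\iota_{u_j}\mu_j$, parametrize the constraint space $\{\,\sum_j\beta_j\ \text{closed}\,\}$ explicitly, and compute its annihilator under the wedge pairing by bump-function localization plus Poincar\'e duality. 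Your route is PDE-free --- no elliptic solvability is invoked, and the only global input is nondegeneracy of the cup-product pairing (or the Hodge-theoretic substitute you sketch) --- and it cleanly isolates the one-phase content, namely that the annihilator of the closed $(\dim M-1)$-forms in $\Omega^1(M)$ is exactly $d\Cont^\infty(M)$, i.e.\ the classical identification of the smooth dual of $\SVect(M)$. What it does not produce is the distinguished co-closed representative, which the paper reuses immediately afterwards to invert the inertia operator in Section 4, so the paper's detour through Proposition \ref{prop:ccRepr} pays for itself there. Two minor points of hygiene: you overload $n$, using it both for the number of phases and for $\dim M$ (the $\beta_j$ live in $\Omega^{\dim M-1}(M)$, whereas $n$ in the paper counts phases), and in the degenerate case $\dim M=1$ the step ``test against $d\tau$ with $\tau\in\Omega^{\dim M-2}(M)$'' is vacuous --- harmless, since $d\alpha$ is then automatically zero.
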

 \begin{proof}
 First observe that $\delta(d\Cont^\infty(M)) \subset \Ker \,\pi$. This is due to condition~\eqref{divFree}:  
\begin{gather}
\langle (df, \dots, df), \bu\rangle=\sum\nolimits_{j=1}^n\int_M df(u_j)  \mu_j
=-\sum\nolimits_{j=1}^n\int_M f\wedge \L_{u_j}\mu_j \\=- \int_M f\wedge \sum\nolimits_{j=1}^n\L_{u_j}\mu_j=0\,.
\end{gather}
So, the map $\pi  \colon  \Omega^1(M)^n \to \mvect(M,\bmu)^*$ descends to a surjective linear map $$\underline \pi \colon  \Omega^1(M)^n/\delta(d\Cont^\infty(M)) \to  \mvect(M,\bmu)^*.$$
We need to show that the latter is injectibe. This is based on the following.

\begin{proposition}\label{prop:ccRepr}
For any choice of a Riemannian metric on $M$, any coset $[\balpha] \in  \Omega^1(M)^n/\delta(d\Cont^\infty(M))$ has a unique (``co-closed'') representative $\balpha \in  \Omega^1(M)^n$ such that
\begin{equation}\label{eq:cc}d^*\sum\nolimits_{j=1}^n\rho_j \alpha_j = 0\,,\end{equation}
where $\rho_j : = \mu_j / \vol$.
\end{proposition}
\begin{proof}
This is equivalent to saying that for any $\balpha \in  \Omega^1(M)^n$ there exists a function $f \in \Cont^\infty(M)$, unique up to an additive constant, such that
$$
d^*\sum\nolimits_{j=1}^n\rho_j (\alpha_j + df) = 0 \quad \Leftrightarrow \quad d^* \left( df +  \sum\nolimits_{j=1}^n\rho_j \alpha_j  \right) = 0 \quad \Leftrightarrow \quad \Delta f = - d^* \sum\nolimits_{j=1}^n\rho_j \alpha_j\,.
$$
This is a Poisson equation on $f$ whose solution is unique up to an additive constant, as needed. 
\end{proof}
Returning to the proof of Proposition \ref{prop:nnp}, given $[\balpha] \in  \Omega^1(M)^n/\delta(d\Cont^\infty(M))$, $[\balpha] \neq 0$, consider its representative  $\balpha$ satisfying \eqref{eq:cc} for some Riemannian metric on $M$. Then the multiphase vector field $\balpha_{}^\sharp$  satisfies \eqref{divFree}, so $ \balpha^\sharp \in \mvect(M, \bmu)$. Furthermore, we have 
$$
\left \langle \underline \pi([\balpha]), \balpha_{}^\sharp \right\rangle  = \sum\nolimits_{j=1}^n\int_M \left(\alpha_j^\sharp, \alpha_j^\sharp\right)  \mu_j> 0\,,$$
and hence $\underline \pi([\balpha]) \neq 0$. So $\underline \pi$ is indeed injective, as needed. 
 \end{proof}
 In what follows we make an identification $\mvect(M,\bmu)^* \simeq  \Omega^1(M)^n/\delta(d\Cont^\infty(M))$.
Accordingly, the smooth dual of the algebroid $\mvect(M)$ is the trivial vector bundle
 $$
 \mvect(M)^* = \bigcup\nolimits_{\bmu \in \mdens(M)}\mvect(M,\bmu)^* = \left( \Omega^1(M)^n/\delta(d\Cont^\infty(M)) \right) \times \mdens(M).
 $$ 
 over the space of multiphase densities $\mdens(M)$.

 An important property of the smooth dual $ \mvect(M, \bmu)^* =  \Omega^1(M)^n/\delta(d\Cont^\infty(M))$ is that this subspace of the full dual space ``separates points'', meaning that for any non-zero $\bu \in \mvect(M, \bmu)$ there exists $[\balpha] \in \mvect(M, \bmu)^*$ such that $\langle [\balpha], \bu\rangle \neq 0$ (indeed, one can take $\balpha := \bu^\flat$). This is equivalent to saying that $\mvect(M, \bmu)$ injects into the dual of its smooth dual, which is needed for the Poisson bracket on $\mvect(M)^*$ to be well-defined. 
This Poisson bracket is described in the next section. For this we describe the tangent and cotangent spaces to $\mvect(M)^*$. First we note that since $\mvect(M)^*$ is a trivial vector bundle, we have the following natural splitting of its tangent space.
 }
%
%
%
%
%
%
%

%
%
%
%
%
%
%
%
%
%
%
%
%
%
%
%
%
%
%
%
%
%
%
%
   \begin{proposition}
There is a natural splitting
 \begin{equation}\label{tanSpaceSplitting}
 \T_{[\balpha],\,\bmu}\mvect(M)^* \simeq \mvect(M, \bmu)^*_{}\, \oplus \, \T_\bmu\mdens(M)\,.
 \end{equation}
 \end{proposition}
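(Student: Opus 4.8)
The plan is to read off the splitting directly from the product structure of $\mvect(M)^*$ established just above. Recall that, by Proposition~\ref{prop:nnp} and the discussion following it, the smooth dual is the trivial vector bundle $\mvect(M)^* = \mathcal{V} \times \mdens(M)$, where $\mathcal{V} := \Omega^1(M)^n/\delta(d\Cont^\infty(M))$, with the fiber over each $\bmu$ canonically identified with $\mvect(M,\bmu)^*$. First I would invoke the elementary fact that the tangent space of a product of (Fr\'echet) manifolds splits canonically as the direct sum of the tangent spaces of the factors, so that $\T_{([\balpha],\bmu)}\bigl(\mathcal{V} \times \mdens(M)\bigr) = \T_{[\balpha]}\mathcal{V} \oplus \T_\bmu \mdens(M)$, the two summands being the kernels of the differentials of the two projections (equivalently, the images of the differentials of the slice inclusions $\mathcal{V}\hookrightarrow \mathcal{V}\times\{\bmu\}$ and $\mdens(M)\hookrightarrow\{[\balpha]\}\times\mdens(M)$).

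Next I would identify the first summand. Since $\mathcal{V}$ is a topological vector space, translation identifies $\T_{[\balpha]}\mathcal{V}$ canonically with $\T_0\mathcal{V} = \mathcal{V}$, and $\mathcal{V}$ is in turn identified with $\mvect(M,\bmu)^*$ by Proposition~\ref{prop:nnp}. This yields $\T_{[\balpha]}\mathcal{V} \simeq \mvect(M,\bmu)^*$, which together with the description of $\T_\bmu\mdens(M)$ from Lemma~\ref{lemma:vstanspace} gives exactly the claimed splitting \eqref{tanSpaceSplitting}. There is no analytic obstacle: all of this is routine in the category of Fr\'echet vector bundles.

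The one point that deserves an explicit comment — and the closest thing here to a ``main obstacle'' — is the word \emph{natural}. The ``vertical'' subspace $\mvect(M,\bmu)^*$ is intrinsic: it is the tangent to the fiber, i.e. the kernel of the differential of the bundle projection $\mvect(M)^* \to \mdens(M)$. The ``horizontal'' complement identified with $\T_\bmu\mdens(M)$, however, is only as canonical as the trivialization used to produce it, since it is precisely the trivialization that provides a consistent identification of all fibers of the bundle with the fixed space $\mathcal{V}$. I would therefore emphasize that the trivialization at hand is the canonical one coming from Proposition~\ref{prop:nnp} — it involves no auxiliary data such as a choice of Riemannian metric — so the resulting splitting is natural in that sense, and this is the splitting with respect to which the tangent and cotangent spaces of $\mvect(M)^*$, and hence the Poisson bracket, will be described in the next section.
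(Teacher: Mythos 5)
Your argument is correct and is essentially the paper's own: the proposition is stated immediately after the observation that $\mvect(M)^* = \left(\Omega^1(M)^n/\delta(d\Cont^\infty(M))\right)\times\mdens(M)$ is a trivial vector bundle, and the splitting is exactly the tangent space of that product, with the vector-space factor identified with its own tangent space. Your additional remark on why the horizontal summand is canonical (the trivialization requires no auxiliary choices) is a reasonable elaboration of what the paper leaves implicit.
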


  \subsection{Poisson bracket on the dual algebroid}\label{sect:poisson_vs}
 In this section we show that formula \eqref{PoissonExplicit} gives a well-defined Poisson bracket on $\mvect(M)^*$. For this we need to describe the cotangent space to  $\mvect(M)^*$ and we  start by defining the cotangent space to the base,
  $\T^*_\bmu \mdens(M)$.
 \begin{definition}
{The \textit{smooth cotangent space}  $\T^*_\bmu \mdens(M)$ is the quotient $\Cont^\infty_0(M)^n \, / \, \delta(\Cont^\infty_0(M))$, where $\Cont^\infty_0(M) := \Cont^\infty(M)\,/ \, \R$, and $\delta \colon \Cont^\infty_0(M) \to \Cont^\infty_0(M)^n $ is the diagonal embedding $h \mapsto (h, \dots, h)$. The pairing between a coset $[\bF] \in   \Cont^\infty_0(M)^n \, / \, \delta(\Cont^\infty_0(M)) $ }and a tangent vector $\bxi \in  \T_\bmu \mdens(M)$ (i.e. a collection $(\xi_1, \dots, \xi_n)$ of top-degree forms on $M$ such that $\sum\nolimits_{j=1}^n \xi_j = 0$, see Lemma \ref{lemma:vstanspace}) is given by 
  $$
  \langle [\bF], \bxi \rangle := \sum\nolimits_{j=1}^n \int_M f_j \xi_j\,.
  $$
  (The right-hand side {does not depend on} the choice of a representative $\bF \in [\bF]$ thanks to the zero sum condition on $\bxi$. The integral $\int_M f_j \xi_j$ is well-defined for $f_j \in \Cont^\infty_0(M)$ since $\int_M  \xi_j = 0$.)
  \end{definition}
  Now we define the cotangent space to $\mvect(M)^*$ by dualizing splitting \eqref{tanSpaceSplitting}.
  
   \begin{definition}
 Let ${([\balpha],\bmu)} \in \mvect(M)^*$. Then the \textit{smooth cotangent space} to $\mvect(M)^*$ at ${([\balpha],\bmu)}$ is
 \begin{equation}\label{cotanSpaceSplitting}
 \T^*_{{[\balpha],\,\bmu}}\mvect(M)^* := \mvect(M, \bmu)_{}\, \oplus \, \T^*_{\bmu}\mdens(M)\,,
\end{equation}
 where the second summand is the smooth cotangent space. %
 \end{definition}
 Further, we define the notion of a differentiable function on $\mvect(M)^*$. Roughly speaking, a function is differentiable if it has a differential belonging to the smooth cotangent space.
 \begin{definition}
 A function $\F \colon \mvect(M)^* \to \R$ is \textit{differentiable} if there exists a section $d\F$ of the smooth cotangent bundle  $\T^*_{}\mvect(M)^*$
such that for any smooth curve $\left([\balpha](t), \mu(t)\right)$ in $\mvect(M)^*$ one has
 $$
\tfrac{d}{dt}\F\left(\vphantom{}{[\balpha](t)}, \bmu(t)\right)= \langle  d \F\left({[\balpha](t)}, \bmu(t)\vphantom{}\right),({\tfrac{d}{dt} [\balpha](t)}{}, {\tfrac{d}{dt}\bmu(t)}{}  ) \rangle\,.
 $$
 \end{definition}
 Using splitting \eqref{cotanSpaceSplitting}, we decompose $d\F({[\balpha]})$ for $[\balpha] \in \mvect(M, \bmu)^*$ into the fiber and base parts: 
  $$
d \F\left({[\balpha]}, \bmu\vphantom{}\right) = \left(
 {d^F\F}\left({[\balpha]}, \bmu\vphantom{}\right),
  {d^B  \F}\left(\vphantom{}{[\balpha]}, \bmu\right)
  \right),$$
  where
  $$
  {d^F\F}\left({[\balpha]}, \bmu\vphantom{}\right)\in \mvect(M, \bmu)\,, \quad      {d^B \F}\left({[\balpha]}, \bmu\vphantom{}\right) \in \T^*_{\bmu}\mdens(M) = \Cont_0^\infty(M)^n \, / \, \delta(\Cont_0^\infty(M)).
 $$
 
 \begin{theorem}\label{thm:bracket}
 Let  $\F_1, \F_2 \colon \mvect(M)^* \to \R$ be differentiable functions. Then their Poisson bracket reads%
 \begin{equation}
   \{\F_1, \F_2\} = \P(d \F_1, d \F_2)\,,
 \end{equation}
 where the value of the Poisson tensor $\P$ on two cotangent vectors $$(\bu, [\bF]), (\bv, [\bg])\in \T^*_{[\balpha], \,\bmu}\mvect(M)^* = \mvect(M, \bmu)\, \oplus \, \T^*_{\bmu}\mdens(M)$$ at a point $([\balpha], \bmu) \in \mvect(M)^*$ is
 {
\begin{equation}\label{pbf}
\begin{aligned}
 \P_{[\balpha], \,\bmu}&\left(\vphantom{}(\bu, [\bF]), (\bv, [\bg]) \right)
  =
  \sum\nolimits_{j=1}^n\int_M \left( -\diff \alpha_j(u_j, v_j) + \L_{u_j}g_j - \L_{v_j}f_j\right)\mu_j.  %
  \end{aligned}
\end{equation}
}
Here $\balpha_{} \in \Omega^1(M)^n$ is  an arbitrary  representative of the coset $[\balpha]$.
 \end{theorem}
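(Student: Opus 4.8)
The plan is to verify formula \eqref{pbf} by applying the general Poisson-bracket formula \eqref{PoissonExplicit} from Proposition \ref{poissonBracketFormula} to the algebroid $\mvect(M)$, using the explicit description of the fibers, the anchor map, and the bracket on sections from Theorem \ref{algFibers}. Since by Remark \ref{subAlgBracket} our algebroid is a subalgebroid of the action algebroid $\Vect(M)^n \ltimes \Dens(M)^n$, I expect it to be cleaner to start from the action-algebroid version \eqref{PoissonExplicit2}--\eqref{eq:qterm} rather than the fully general \eqref{PoissonExplicit}, taking care that the computation is carried out for functions on $\mvect(M)^*$ whose differentials lie in the smooth cotangent space \eqref{cotanSpaceSplitting}.

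First I would fix a point $([\balpha], \bmu) \in \mvect(M)^*$ and differentiable functions $\F_1, \F_2$ with $d^F\F_i = \bu, \bv \in \mvect(M, \bmu)$ (the fiber parts) and $d^B\F_i = [\bF], [\bg] \in \T^*_\bmu\mdens(M)$ (the base parts). The term $\langle \alpha, [d^F\F_1, d^F\F_2]\rangle$ in \eqref{PoissonExplicit2}: here the algebroid bracket of the extending sections, evaluated at $\bmu$, is given by \eqref{sectionsBracket}, so one pairs $[\balpha]$ against $[U(\bmu), V(\bmu)] + \L_{\#U(\bmu)}V - \L_{\#V(\bmu)}U$. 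The key observation is that when pairing a coset $[\balpha] \in \Omega^1(M)^n/\delta(d\Cont^\infty(M))$ with a general element of $\Vect(M)^n$ (not necessarily satisfying \eqref{divFree}) the result can depend on the representative; but the combination that actually appears is in $\mvect(M, \bmu)$, so the pairing is well-defined. I would then recognize, via the standard identity $\iota_u \iota_v d\alpha = \L_u(\iota_v\alpha) - \iota_v(\L_u\alpha) - \iota_{[u,v]}\alpha$ applied componentwise and integrated against $\mu_j$, that the $[U(\bmu),V(\bmu)]$ contribution together with the derivative terms rearranges into the $-d\alpha_j(u_j,v_j)$ part of \eqref{pbf}. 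The $q(f,g) - q(g,f)$ terms in \eqref{PoissonExplicit2}, which involve the base derivatives of $\F_i$ along anchor directions, produce exactly the $\L_{u_j}g_j - \L_{v_j}f_j$ terms after one uses the formula $\# \bu = -\L_\bu\bmu$ for the anchor and the definition of the pairing between $\T^*_\bmu\mdens(M)$ and $\T_\bmu\mdens(M)$; here I'd use that $\int_M (\L_{u_j}g_j)\mu_j$ is the natural way $g_j \in \Cont_0^\infty(M)$ pairs with the density $\L_{u_j}\mu_j$, modulo an integration by parts and the fact that $\int_M \L_{u_j}\mu_j = 0$.

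The main obstacle I anticipate is bookkeeping the independence of all intermediate expressions on the auxiliary choices: the representative $\balpha$ of $[\balpha] \in \Omega^1(M)^n/\delta(d\Cont^\infty(M))$, the representatives $\bF, \bg$ of the cosets in $\Cont^\infty_0(M)^n/\delta(\Cont^\infty_0(M))$, and the section extensions $\hat\alpha$ implicit in \eqref{PoissonExplicit}. One must check that the $d\alpha_j$ terms are representative-independent because changing $\alpha_j \mapsto \alpha_j + df$ changes $d\alpha_j$ by $0$; that the $\L_{u_j}g_j$ terms are representative-independent because $\bu \in \mvect(M,\bmu)$ forces $\sum_j \int_M (\L_{u_j}c)\mu_j = 0$ for the constant shift; and that the section-extension ambiguity in the derivative terms of \eqref{PoissonExplicit} drops out. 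I would also need to confirm that the right-hand side of \eqref{pbf} is skew-symmetric and bilinear in the obvious way, and — to truly justify calling it a Poisson bracket rather than merely writing down \eqref{PoissonExplicit} in coordinates — that the Jacobi identity holds; for the latter I would invoke that $\mvect(M)$ is a genuine Lie algebroid (Remark \ref{frechet}) together with the point-separating property of the smooth dual noted just before Proposition/equation \eqref{tanSpaceSplitting}, so that the abstract Poisson structure on $\A^*$ restricts consistently to the smooth dual. The computation itself, once the identifications are set up, is a routine application of Cartan's magic formula and integration by parts.
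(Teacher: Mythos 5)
Your proposal follows essentially the same route as the paper's proof: both start from the action-algebroid form \eqref{PoissonExplicit2}--\eqref{eq:qterm} of the general bracket (justified by Remark \ref{subAlgBracket}), identify the fiber and base differentials with $(\bu,[\bF])$ and $(\bv,[\bg])$, compute the $q$-terms using the anchor $\#\bu=-\L_\bu\bmu$ and a constant extension of $[\balpha]$, and pass between the Lie-bracket form \eqref{pbf2} and the curvature form \eqref{pbf} via the identity $i_{[u,v]}=[\L_u,i_v]$. The well-definedness checks you flag (representative independence of $\balpha$ and of $\bF,\bg$, cancellation of the section-extension ambiguity) are exactly the points the paper handles implicitly, so the plan is sound.
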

 {
\begin{remark}
 Equivalently, this bracket can be written in the form, similar to  a Lie-Poisson bracket with additional terms:
\begin{align}\label{pbf2}
 \begin{aligned}
 \P_{[\balpha], \, \bmu}\left(\vphantom{}(\bu, [\bF]), (\bv, [\bg]) \right)& \\ = \sum\nolimits_{j=1}^n&\int_M \left(\alpha_{i}([ u_j, v_j ])
 + \, \L_{v_j}\left( i_{u_j}\alpha_j-f_j \right) -  \, \L_{u_j}\left( i_{v_j}\alpha_j-g_j \right) \right)\mu_j.
\end{aligned}
\end{align}
\end{remark}

 \begin{proof}[Proof of Theorem \ref{thm:bracket}]
 {
 Formulas \eqref{pbf} and   \eqref{pbf2} are equivalent to each other. To see this, rewrite the first term in  \eqref{pbf2} using the formula $i_{[u, v]} = [\L_{u}, i_{v}]$. So, it suffices to derive~\eqref{pbf2}.
}

{ Since the bracket~\eqref{sectionsBracket} on sections of $\mvect(M)$ has the same form as for an action algebroid, we can compute the Poisson bracket in the dual using formula \eqref{PoissonExplicit2}. This gives
\begin{align}\label{PB1}
  \{\F_1, \F_2\}\left([\balpha], \bmu\vphantom{}\right) =    \langle \balpha ,\left[  d^F\F_{1}([\balpha] , \bmu),   d^F\F_{2}([\balpha] , \bmu)\right] \rangle\, + q (\F_1, \F_2) -  q (\F_2, \F_1),
\end{align}
Here and below  the pairing $\metric$ between multiphase forms and multiphase vector fields is given by {\eqref{eq:spairing}}, and the commutator of multiphase vector fields is defined component-wisely. To compute the $q(\F_k, \F_l)$ terms we extend $[\balpha]$ to a \emph{constant} section $A \colon \mdens(M) \to \mvect(M), A(\bmu) := [\balpha]$ of the trivial vector bundle $\mvect(M)^*$. Also, let $U_k$ be  the section of $\mvect(M)$ given by $U_k:=  {d^F\F_{k}}(A)$. Then formula \eqref{eq:qterm} gives
\begin{align}\label{sij}
q(\F_k, \F_l) := \langle\balpha,  \L_{\#U_k(\bmu)}  U_l \rangle +  \L_{\#U_k(\bmu)}  \left(\F_l \circ A\right) -\L_{\#U_k(\bmu)} \langle A ,U_l \rangle.
\end{align}

}
\par
Now, take any curve $ \bmu_k(t) \in \mdens(M)$ such that $\bmu_k(0) = \bmu$, and the tangent vector to $\bmu_k(t)$ at $\bmu$  is $\#U_k(\bmu)$. %
Then, using that $A$ is a constant section, we get
\begin{align}\label{sijsum1}
\begin{gathered}
{\L_{\#U_k(\bmu)}  \left(\F_l \circ A\right) =\frac{ \hphantom{}d}{d t}\vert_{t = 0}\,  \F_l(A(\bmu_k(t)))}
  =  \langle \vphantom{\frac{ d}{d t}\restrict{t=0}  [\alpha_j] } {d^B  \F_l}{ }([\balpha], \bmu), \#U_k(\bmu) \rangle. 
 \end{gathered}
\end{align}
Further, we have
\begin{gather}
{\L_{\#U_k(\bmu)} \langle A ,U_l \rangle}  = \frac{ \hphantom{}d}{d t}\restrict{t=0} \left\langle A(\bmu_k(t)) ,U_l(\bmu_k(t))\vphantom{} \right\rangle  = \frac{ \hphantom{}d}{d t}\restrict{t=0} \sum\nolimits_{j=1}^n \int_M \left(i_{U_l(\bmu_k(t))_j} \alpha_j\right) \mu_k(t)_j
 \vphantom{\frac{ \hphantom{}d}{d t}\restrict{t=0} \sum\nolimits_{j=1}^n \int_M \left(i_{U_l(\bmu_k(t))_j} \alpha_j\right) \mu_k(t)_j} \\ {=\sum\nolimits_{j=1}^n \int_M \left(i_{\frac{ \hphantom{}d}{d t}\restrict{t=0} U_l(\bmu_k(t))_j}\alpha_j\right) \mu_k(t)_j  + \sum\nolimits_{j=1}^n \int_M \left(i_{U_l(\bmu_k(t))_j}\alpha_j\right) \frac{ \hphantom{}d}{d t}\restrict{t=0} \mu_k(t)_j }
 \\=   \left\langle \balpha, {\L}_{\#U_k(\bmu)} U_l \vphantom{}\right\rangle  + \left\langle i_{U_l(\bmu)}\balpha,  \#U_k(\bmu)\vphantom{} \right\rangle\,.
\end{gather}
Substituting this, along with \eqref{sijsum1}, into \eqref{sij}, { we get
\begin{align}\label{sij2}
q(\F_k, \F_l) := \left\langle  {d^B  \F_l}{ }([\balpha], \bmu)-  i_{U_l(\bmu)}\balpha,  \#U_k(\bmu)\vphantom{} \right\rangle.
\end{align}
Finally, plugging this } into~\eqref{PB1} { and using that $U_l(\bmu) =  d^F\F_{l}([\balpha] , \bmu)$}, one gets  \eqref{pbf2}.  \end{proof}

 \begin{corollary}
 The  Hamiltonian operator
 $$
 \P_{[\balpha], \,\bmu}^\sharp \colon \T^*_{[\balpha], \,\bmu}\mvect(M)^* \to  \T_{[\balpha], \,\bmu}\mvect(M)^*
 $$
 corresponding to the Poisson bracket on $\mvect(M)^* $  is given by
 \begin{equation}\label{HamOperator}
  (\bu, [\bF]) \mapsto (-[i_{\bu}\diff \balpha] - [d\bF], -\L_\bu \bmu)\,.
\end{equation}
(Note that the coset $[d\bF]$ of $d\bF$ in $\Omega^1(M)^n / d\Cont^\infty(M)$ does not depend on the choice of a representative $\bF$ in the coset $[\bF] \in \Cont_0^\infty(M)^n / \Cont_0^\infty(M)$.)
 \end{corollary}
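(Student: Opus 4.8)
The plan is to read off the Hamiltonian operator directly from the Poisson tensor $\P$ computed in Theorem~\ref{thm:bracket}, using the defining relation $\langle \P^\sharp_{[\balpha],\bmu}(\bu,[\bF]),\,(\bv,[\bg])\rangle = \P_{[\balpha],\bmu}\big((\bu,[\bF]),(\bv,[\bg])\big)$ for all cotangent vectors $(\bv,[\bg]) \in \T^*_{[\balpha],\bmu}\mvect(M)^* = \mvect(M,\bmu) \oplus \T^*_\bmu\mdens(M)$. Here the left-hand pairing is the one between $\T_{[\balpha],\bmu}\mvect(M)^* \simeq \mvect(M,\bmu)^* \oplus \T_\bmu\mdens(M)$ (splitting \eqref{tanSpaceSplitting}) and its smooth dual \eqref{cotanSpaceSplitting}; concretely, an element $([\bbeta],\bxi)$ of the tangent space pairs with $(\bv,[\bg])$ by $\sum_j \int_M \beta_j(v_j)\,\mu_j + \sum_j\int_M g_j\,\xi_j$. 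So the task is just to rewrite formula \eqref{pbf} so that the dependence on $(\bv,[\bg])$ appears in exactly this paired form, and then identify the two components of the output.

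First I would take formula \eqref{pbf},
$$
\P_{[\balpha],\bmu}\big((\bu,[\bF]),(\bv,[\bg])\big) = \sum_{j=1}^n \int_M\big(-d\alpha_j(u_j,v_j) + \L_{u_j}g_j - \L_{v_j}f_j\big)\mu_j,
$$
and treat the three groups of terms separately. The term $-\int_M d\alpha_j(u_j,v_j)\,\mu_j = \int_M (i_{u_j}d\alpha_j)(v_j)\,\mu_j$, so it contributes $-[i_{\bu}d\balpha]$ to the fiber component (the coset is well-defined since pairing against $\bv \in \mvect(M,\bmu)$ kills $\delta(d\Cont^\infty(M))$, cf. Proposition~\ref{prop:nnp}). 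The term $-\int_M (\L_{v_j}f_j)\mu_j = \int_M (i_{v_j} df_j)\,\mu_j$ likewise contributes $-[d\bF]$ to the fiber component; the parenthetical remark that $[d\bF]$ is independent of the representative $\bF$ of $[\bF]$ follows because changing $\bF$ by $\delta(h)$, $h \in \Cont^\infty_0(M)$, changes $d\bF$ by $\delta(dh)$, which is zero in $\Omega^1(M)^n/\delta(d\Cont^\infty(M))$. For the remaining term, integration by parts against $\mu_j$ gives $\int_M (\L_{u_j}g_j)\mu_j = -\int_M g_j\,\L_{u_j}\mu_j$, i.e. $\sum_j\int_M g_j\,\xi_j$ with $\bxi = -\L_\bu\bmu$; this is exactly the pairing of $[\bg] \in \T^*_\bmu\mdens(M)$ with $-\L_\bu\bmu \in \T_\bmu\mdens(M)$ (one checks $-\L_\bu\bmu$ indeed lies in $\T_\bmu\mdens(M)$ via Lemma~\ref{lemma:vstanspace}: the components sum to $-\L_\bu(\sum_j\mu_j)$ — but one must be slightly careful here since the $u_j$ differ, so instead use $\sum_j\int_M g_j\xi_j$ makes sense because $\int_M\xi_j = -\int_M\L_{u_j}\mu_j = 0$). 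Collecting the three contributions yields $\P^\sharp_{[\balpha],\bmu}(\bu,[\bF]) = \big(-[i_\bu d\balpha]-[d\bF],\,-\L_\bu\bmu\big)$, which is \eqref{HamOperator}.

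The only genuinely delicate point is a bookkeeping one: making sure the cosets are well-defined and that the two ``slots'' of the output land in the correct spaces — the first in $\mvect(M,\bmu)^* = \Omega^1(M)^n/\delta(d\Cont^\infty(M))$ and the second in $\T_\bmu\mdens(M)$ (collections of top-forms summing to zero with zero integrals). This is handled by invoking Proposition~\ref{prop:nnp}, Lemma~\ref{lemma:vstanspace}, and condition \eqref{divFree}: for $\bu \in \mvect(M,\bmu)$ one has $\sum_j \L_{u_j}\mu_j = 0$, so the components of $-\L_\bu\bmu$ sum to zero, and each has zero integral by Stokes; and the representative-independence of $[i_\bu d\balpha]$ and $[d\bF]$ follows because the ambiguities ($\delta(d\Cont^\infty)$ in $\balpha$, $\delta(\Cont^\infty_0)$ in $\bF$) produce exact terms that vanish in the quotient. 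I do not expect any real analytic obstacle — this is a direct transcription of \eqref{pbf} — so the "hard part'' is merely stating the integration-by-parts and coset arguments cleanly.
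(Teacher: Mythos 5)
Your proposal is correct and follows essentially the same route as the paper: the paper's own proof is exactly this three-term rewriting of \eqref{pbf} against the pairing between the splittings \eqref{tanSpaceSplitting} and \eqref{cotanSpaceSplitting}, with the integration by parts $\int_M (\L_{u_j}g_j)\mu_j = -\int_M g_j\,\L_{u_j}\mu_j$ producing the base component. The only blemishes are two sign slips in your intermediate identities (one has $-\int_M d\alpha_j(u_j,v_j)\,\mu_j = -\int_M (i_{u_j}d\alpha_j)(v_j)\,\mu_j$, not its negative, and similarly for the $\L_{v_j}f_j$ term), which do not affect the correct contributions you then extract from them.
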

 \begin{proof}
By definition, we have
\begin{align}
 \langle (\bv, [\bg]) ,  \P_{[\balpha],\,\bmu}^\sharp(\bu, [\bF])\rangle &=  \P_{[\balpha],\,\bmu}\left((\bu, [\bF]), (\bv, [\bg])\right) \\&=    \sum\nolimits_{j=1}^n\int_M \left( -\diff \alpha_j(u_j, v_j) + \L_{u_j}g_j - \L_{v_j}f_j\right)\mu_j 
\\&=\vphantom{ \sum\nolimits_{j=1}^n\int_M \left( -\diff \alpha_j(u_j, v_j) + \L_{u_j}g_j - \L_{v_j}f_j\right)\mu_j } \langle -[i_{\bu}\diff \balpha] - [d\bF], \bv \rangle  - \langle \L_\bu \bmu, [\bg] \rangle.
 \end{align}
The result follows.
\end{proof}
\begin{remark}
Using Proposition \ref{dualAnchorProp} below, the Hamiltonian operator \eqref{HamOperator} can be rewritten in terms of the anchor map $\# \colon \mvect(M) \to T\mdens(M)$, as follows:
$$
(\bu, [\bF]) \mapsto (-[i_{\bu}\diff \balpha] - \#^*[\bF], \# \bu),
$$
which  agrees with the corresponding operator for the motion of vortex sheets, see \cite[Corollary 6.27]{IK}.
\end{remark}
 \medskip

\section{Dynamics of multiphase fluids}\label{sect:dyn_vs}
\subsection{Geodesic and Hamiltonian framework for multiphase fluids}

In this section, $M$ is a compact connected oriented manifold without boundary endowed with a Riemannian metric $(\,,)$ and the corresponding Riemannian volume form $\vol$.
We define a metric $\metric_{L^2}$ on the Lie algebroid $\mvect(M)$ as follows: for $\bu, \bv \in \mvect(M, \bmu) $, one has
\begin{equation}\label{eq:metric}
\langle \bu, \bv \rangle_{L^2} := \sum\nolimits_{j=1}^n \int_M (u_j, v_j) \mu_j.
\end{equation}
\begin{proposition}
\begin{enumerate}
\item
The inertia operator $\I$ associated with the $L^2$-metric $\metric_{L^2}$ on $\mvect(M)$  takes values in the smooth dual $\mvect(M)^*$. For $\bu \in \mvect(M, \bmu)$, one has
$
\I(\bu) = [\bu^\flat]
$,
where $$\bu^\flat := (u_1^\flat, \dots, u_n^\flat),$$ $u_j^\flat$ denotes the $1$-form dual to the vector field $u_j$ with respect to the Riemannian metric $(\,,)$ on $M$, and $[\bu^\flat]$ stands for the coset of $\bu^\flat$ in {$ \Omega^1(M)^n \, / \, \delta(d\Cont^\infty(M))$}.
\item The inertia operator $\I \colon \mvect(M) \to  \mvect(M)^*$ is an isomorphism of vector bundles.
\end{enumerate}
\end{proposition}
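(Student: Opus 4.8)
The plan is to read off both statements from the definition of the inertia operator together with the description of the smooth dual in Section~\ref{sect:dual}. Recall that $\I$ is characterized fiberwise by $\langle \I(\bu), \bv\rangle = \langle \bu, \bv\rangle_{L^2}$ for $\bu, \bv \in \mvect(M,\bmu)$.

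For part (1) I would rewrite the defining $L^2$-pairing \eqref{eq:metric} as $\sum_{j=1}^n \int_M u_j^\flat(v_j)\,\mu_j$, using $(u_j, v_j) = u_j^\flat(v_j)$. Comparing with \eqref{eq:spairing}, this exhibits the smooth $1$-forms $u_1^\flat, \dots, u_n^\flat$ as a smooth density for the functional $\I(\bu)$, so $\I(\bu)$ lies in $\mvect(M,\bmu)^*$; under the identification $\mvect(M,\bmu)^* \simeq \Omega^1(M)^n / \delta(d\Cont^\infty(M))$ of Proposition~\ref{prop:nnp} this reads $\I(\bu) = [\bu^\flat]$. This part is immediate.

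For part (2), since $\I$ is a fiberwise-linear bundle map covering the identity of $\mdens(M)$, it is an isomorphism of vector bundles once it is shown to be a fiberwise bijection with smooth inverse. Fiberwise injectivity over $\bmu$ follows from positivity of the $\mu_j$: if $\I(\bu) = 0$ then $0 = \langle \bu, \bu\rangle_{L^2} = \sum_j \int_M (u_j, u_j)\,\mu_j$, forcing each $u_j \equiv 0$. For surjectivity, given $[\balpha] \in \mvect(M,\bmu)^*$ I would take its co-closed representative $\balpha$ from Proposition~\ref{prop:ccRepr}, so that $d^*\sum_j \rho_j \alpha_j = 0$ with $\rho_j = \mu_j/\vol$, and set $\bu := \balpha^\sharp = (\alpha_1^\sharp, \dots, \alpha_n^\sharp)$. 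The point that needs checking is that $\bu$ lies in $\mvect(M,\bmu)$, i.e. satisfies \eqref{divFree}: by Cartan's formula and $\mu_j = \rho_j\vol$ one has $\sum_j \L_{u_j}\mu_j = \sum_j d\,i_{\alpha_j^\sharp}\mu_j = d\big(\star\sum_j \rho_j\alpha_j\big)$, and for a $1$-form $\beta$ on a closed oriented manifold $d\star\beta = 0$ if and only if $d^*\beta = 0$; hence the co-closedness \eqref{eq:cc} is precisely the divergence-freeness \eqref{divFree}. (This is the computation already used implicitly in the proof of Proposition~\ref{prop:nnp}.) Granting this, $\I(\bu) = [\bu^\flat] = [\balpha]$ because $\balpha^{\sharp\flat} = \balpha$, so $\I$ is surjective.

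Finally, smoothness: $\bu \mapsto [\bu^\flat]$ is clearly smooth, and the inverse $[\balpha] \mapsto \balpha_{\mathrm{cc}}^\sharp$ is smooth because the co-closed representative is produced by solving the Poisson equation $\Delta f = -d^*\sum_j \rho_j\alpha_j$ for the correction $f$, whose solution depends smoothly — in the Fr\'echet $\Cont^\infty$ (or Sobolev $H^s$) topology, by elliptic regularity — on $\balpha$ and on $\bmu$ through the coefficients $\rho_j$. The substantive points are the Hodge-star identification of \eqref{eq:cc} with \eqref{divFree} and, in the Fr\'echet setting, the smooth dependence of the solution of this Poisson equation on the data; everything else is bookkeeping with the quotient description of $\mvect(M,\bmu)^*$.
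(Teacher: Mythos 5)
Your proposal is correct and follows essentially the same route as the paper: part (1) is read off from the $L^2$-pairing, and part (2) reduces invertibility to the existence and uniqueness of the co-closed representative from Proposition~\ref{prop:ccRepr} together with the identification of the co-closedness condition \eqref{eq:cc} with the divergence-free condition \eqref{divFree}. Your extra details (the separate injectivity argument from positive-definiteness, the explicit Hodge-star computation, and the remark on smooth dependence via elliptic regularity) are all sound and merely make explicit what the paper leaves implicit.
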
\label{prop:invertibility}

\begin{proof}
By definition of the inertia operator, for $\bu ,\bv \in \mvect(M, \bmu)$, one has
$$
\langle \I(\bu), \bv \rangle = \langle \bu, \bv \rangle_{L^2} =  \sum\nolimits_{j=1}^n \int_M (u_j, v_j) \mu_j =  \sum\nolimits_{j=1}^n \int_M i_{v_j} u_j^\flat \,\mu_j\,.
$$
This means that the functional $\I(\bu)$ coincides with the functional represented by the coset of $\bu^\flat \in \Omega^1(M)^n$, proving the first statement.

The second statement, i.e. invertibility of the inertia operator, is equivalent to saying that the equation
$
[\bu^\flat] = [\balpha]
$
has a unique solution $\bu \in \mvect(M, \bmu)$ for any coset $[\balpha] \in \mvect(M, \bmu)^*$. Written in terms of the form $\balpha := \bu^\flat$, the condition  $\bu \in \mvect(M, \bmu)$ translates to \eqref{eq:cc}. { (Indeed, the defining condition \eqref{divFree} of $\mvect(M, \bmu)$ is equivalent to \eqref{eq:sde}, which, in turn, is equivalent to  \eqref{eq:cc} since the divergence of a vector field is the same as the co-differential of its metric dual form.) So,} the result follows from Proposition \ref{prop:ccRepr}. Explicitly, we have  $\I^{-1}([\balpha]) = \balpha^\sharp$, where $\balpha$ is the coset representative satisfying \eqref{eq:cc}.
\end{proof}
Since the inertia operator is invertible, we also obtain an $L^2$-metric on  $\mvect(M)^*$, and the corresponding Euler-Arnold Hamiltonian
$$
\mathcal H\left([\balpha], \bmu\vphantom{}\right) := \tfrac{1}{2} \sum\nolimits_{j=1}^n \int_M (\alpha_j, \alpha_j)\, \mu_j,
$$
where $\balpha \in [\balpha]$ is a representative satisfying \eqref{eq:cc}.

\begin{theorem}\label{thmMain}
The Euler-Arnold equation corresponding to the $L^2$-metric on $\mvect(M)$ written in terms of a coset $[\balpha] \in \mvect(M, \bmu)^*$ reads
\begin{subnumcases}{\label{twoPhaseEulerCosets}}
\partial_t [\balpha] + [i_\bu \diff \balpha + \tfrac{1}{2}\diff i_\bu\balpha]  =0\,,\label{twoPhaseEulerCosets1}\\
\qquad \partial_t \bmu = -\L_\bu \bmu\,, \label{twoPhaseEulerCosets2}
\end{subnumcases}
where $\balpha \in [\balpha]$ is the representative satisfying \eqref{eq:cc}, and $\bu := \balpha^\sharp \in \mvect(M, \bmu)$. 
It is a Hamiltonian equation on the algebroid dual $\mvect(M)^*$ with respect to the natural Poisson structure described above and the energy Hamiltonian function~$\mathcal H$.
\end{theorem}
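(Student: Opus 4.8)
The plan is to read the Euler-Arnold equation off its definition as a Hamiltonian flow and compute the right-hand side explicitly via the formula \eqref{HamOperator} for the Hamiltonian operator $\P^\sharp$. First note that, by Proposition \ref{prop:invertibility}, $\I^{-1}([\balpha]) = \balpha^\sharp$ for the co-closed representative $\balpha$ of \eqref{eq:cc}, so
\[
\mathcal H([\balpha],\bmu) = \tfrac12\langle [\balpha],[\balpha]\rangle_{\mvect(M)^*}
= \tfrac12\langle \balpha^\sharp,\balpha^\sharp\rangle_{L^2}
= \tfrac12\sum\nolimits_{j=1}^n\int_M(\alpha_j,\alpha_j)\,\mu_j,
\]
which is precisely the energy \eqref{Hamiltonian} for the metric $\metric_{L^2}$. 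Thus the groupoid Euler-Arnold equation is, by definition, $\partial_t([\balpha],\bmu) = \P^\sharp_{[\balpha],\,\bmu}(d\mathcal H)$, and it only remains to identify the fiber and base parts $d^F\mathcal H\in\mvect(M,\bmu)$, $d^B\mathcal H\in\T^*_\bmu\mdens(M)$ of the differential and substitute them into \eqref{HamOperator}.

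The fiber derivative is immediate: varying the coset $[\balpha]$ at fixed $\bmu$ in $\mathcal H = \tfrac12\sum_j\int_M(\alpha_j,\alpha_j)\mu_j$ (with $\balpha$ always the co-closed representative) and using that the inertia operator is an isomorphism onto $\mvect(M)^*$, one gets $d^F\mathcal H([\balpha],\bmu) = \balpha^\sharp =: \bu\in\mvect(M,\bmu)$. For the base derivative, fix $[\balpha]$, take a curve $\bmu(t)$ with $\bmu(0)=\bmu$ and $\dot{\bmu}(0)=\bxi$, let $\balpha(t)$ be the co-closed representative of $[\balpha]$ for $\bmu(t)$ and $\bu(t):=\balpha(t)^\sharp$, and differentiate $2\mathcal H(t)=\sum_j\int_M(u_j(t),u_j(t))\,\mu_j(t)$:
\[
2\,\tfrac{d}{dt}\big\vert_{0}\,\mathcal H = 2\sum\nolimits_j\int_M(u_j,\dot u_j)\,\mu_j + \sum\nolimits_j\int_M(u_j,u_j)\,\xi_j.
\]
The \emph{key point} is that the first sum vanishes. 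Indeed, $\balpha(t)$ and $\balpha(0)$ represent the same coset in $\Omega^1(M)^n/\delta(d\Cont^\infty(M))$, hence differ by $(dh(t),\dots,dh(t))$ for a \emph{single} function $h(t)$ with $h(0)=0$; therefore $\dot u_j=(d\dot h)^\sharp=\grad{\dot h}$ is one and the same gradient field for every $j$. Integration by parts on the closed manifold $M$ together with the defining condition \eqref{divFree} then gives
\[
\sum\nolimits_j\int_M(u_j,\grad{\dot h})\,\mu_j=\sum\nolimits_j\int_M(\L_{u_j}\dot h)\,\mu_j=-\int_M\dot h\sum\nolimits_j\L_{u_j}\mu_j=0.
\]
(Equivalently, the co-closed representative minimizes the $L^2$-norm inside its coset, so only the explicit $\bmu$-dependence of $\mathcal H$ contributes.) Hence $d^B\mathcal H([\balpha],\bmu)=[\bF]$ with $f_j=\tfrac12(u_j,u_j)=\tfrac12\,i_{u_j}\alpha_j$; in particular $d\mathcal H=(\bu,[\bF])$ indeed lies in the smooth cotangent space \eqref{cotanSpaceSplitting}.

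Substituting $(\bu,[\bF])$ into \eqref{HamOperator} yields
\[
\partial_t([\balpha],\bmu)=\bigl(-[i_\bu\diff\balpha]-[d\bF],\ -\L_\bu\bmu\bigr),
\]
and since $df_j=\tfrac12\,d\,i_{u_j}\alpha_j$, the first component is exactly \eqref{twoPhaseEulerCosets1} and the second is \eqref{twoPhaseEulerCosets2}. The Hamiltonian assertion then holds by construction, and the identification of $\mathcal H$ with \eqref{Hamiltonian} above shows this is the Euler-Arnold equation for $\metric_{L^2}$. The main obstacle in the argument is the computation of $d^B\mathcal H$: one must track that both the co-closed representative $\balpha(t)$ and the velocity $\bu(t)=\balpha(t)^\sharp$ vary with $\bmu$, and the crucial simplification is that this variation is by a common gradient field, which is annihilated after integrating by parts thanks to the multiphase divergence-free constraint \eqref{divFree}.
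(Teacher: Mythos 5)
Your proposal is correct and follows essentially the same route as the paper's proof: compute $d^F\mathcal H=\bu$ and $d^B\mathcal H=\tfrac12[(\balpha,\balpha)]$, then apply the Hamiltonian operator \eqref{HamOperator}. The only difference is that you spell out explicitly why the $\bmu$-dependence of the co-closed representative does not contribute (its variation is a common gradient, annihilated by \eqref{divFree} after integration by parts), a point the paper's computation uses implicitly.
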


\begin{remark}

Note that for a single phase fluid ($n=1$), the equations \eqref{twoPhaseEulerCosets}
are equivalent to $\partial_t [\alpha] + [i_u \diff \alpha]  =0$, and therefore to the Euler equation 
$\partial_t [\alpha]+\L_u [\alpha]=0$.

\end{remark}

\begin{proof}[Proof of Theorem \ref{thmMain}]
It suffices to compute $d \H([\balpha], \bmu)$ and apply the Hamiltonian operator. Let $([\balpha](s), \bmu(s))$ be an arbitrary smooth curve in {$\mvect(M)^* = (\Omega^1(M)^n \, / \, \delta(d\Cont^\infty(M))) \times \mdens(M)$} with $[\balpha](0)= [\balpha]$ and $\bmu(0) = \bmu$. Let also $\balpha(s) \in [\balpha](s)$ be the representative  satisfying \eqref{eq:cc}. Then
\begin{gather}
\frac{ \hphantom{}d}{d s}\restrict{s=0}\mathcal H([\balpha](s), \bmu(s)) =\,  \frac{1}{2}\frac{ \hphantom{}d}{d s}\restrict{s=0}   \sum\nolimits_{j=1}^n \int_M (\alpha_j(s), \alpha_j(s)) \mu_j(s) \\ =  \langle \,\vphantom{\sum} \bu,\frac{d}{d s}\restrict{s=0}[\balpha](s)\vphantom{\sum}\, \rangle  \,+\, \frac{1}{2}  \sum\nolimits_{j=1}^n \int_M (\alpha_j(s), \alpha_j(s)) \frac{d}{d s}\restrict{s=0}\mu_j(s)\, ,
\end{gather}
implying that
$$
 {d^F\H}{}([\balpha], \bmu) = \bu, \quad {d^B \H}{}([\balpha], \bmu) = \frac{1}{2}\,[(\balpha, \balpha)]\,.
$$
Now, to get \eqref{twoPhaseEulerCosets}, it suffices to apply the Hamiltonian operator \eqref{HamOperator}, ending the proof.
\end{proof}

\begin{theorem}\label{thmMain2}
The Euler-Arnold equations corresponding to the $L^2$-metric on $\mvect(M)$ written in terms of the fluid velocities $\bu := \I^{-1}([\balpha]) \in \mvect(M)$ read
\begin{subnumcases}{\label{twoPhaseEuler2}}
\partial_t u_j + \nabla_{u_j}u_j = -\grad p,{\label{twoPhaseEuler21}} \\ 
 \qquad \partial_t \mu_j = -\L_{u_j}\mu_j, {\label{twoPhaseEuler22}}
\end{subnumcases}
where the pressure $p \in \Cont^\infty(M)$ is common for all phases and is defined uniquely up to an additive constant by equations \eqref{twoPhaseEuler2} supplemented by the condition
\begin{align}\label{addCond}
\sum\nolimits_{j=1}^n \L_{u_j}\mu_j = 0.
\end{align}
Equivalently, equations \eqref{twoPhaseEuler2} describe the velocity along a geodesic for the $L^2$ metric on a source fiber of the groupoid $\mdiff(M)$.
\end{theorem}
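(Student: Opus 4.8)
The plan is to deduce the velocity form \eqref{twoPhaseEuler2} of the Euler--Arnold equation directly from its coset form \eqref{twoPhaseEulerCosets}, which is already established in Theorem~\ref{thmMain}, by unwinding the inertia operator. By Proposition~\ref{prop:invertibility}, the $L^2$ inertia operator $\I$ is an isomorphism onto the smooth dual with $\I^{-1}([\balpha]) = \balpha^\sharp$, where $\balpha$ is the co-closed representative of the coset; in particular, for $\bu := \I^{-1}([\balpha])$ one has $\alpha_j = u_j^\flat$ componentwise, and along a solution $\partial_t[\balpha] = \partial_t[\bu^\flat] = [(\partial_t\bu)^\flat]$.

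First I would rewrite the nonlinear term of \eqref{twoPhaseEulerCosets1} using the standard Riemannian identity $i_v\diff v^\flat + \tfrac{1}{2}\diff{(i_v v^\flat)} = i_v\diff v^\flat + \tfrac{1}{2}\diff{(|v|^2)} = (\nabla_v v)^\flat$, valid for a single vector field $v$. Applied componentwise with $\alpha_j = u_j^\flat$, equation \eqref{twoPhaseEulerCosets1} becomes $[(\partial_t\bu)^\flat] + [(\nabla_\bu\bu)^\flat] = 0$ in $\Omega^1(M)^n/\delta(d\Cont^\infty(M))$, where $(\nabla_\bu\bu)^\flat := ((\nabla_{u_1}u_1)^\flat,\dots,(\nabla_{u_n}u_n)^\flat)$. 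Lifting this equality of cosets to representatives produces a \emph{single} function $p = p(t) \in \Cont^\infty(M)$, unique up to an additive constant, with $(\partial_t u_j)^\flat + (\nabla_{u_j}u_j)^\flat = -\diff p$ for every $j$; that the pressure is common to all phases is exactly the statement that the coset ambiguity is the \emph{diagonal} subspace $\delta(d\Cont^\infty(M))$. Applying $\sharp$ gives \eqref{twoPhaseEuler21}, while \eqref{twoPhaseEuler22} is just \eqref{twoPhaseEulerCosets2} rewritten componentwise.

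Next I would identify this $p$ with the pressure determined by \eqref{twoPhaseEuler2} supplemented by \eqref{addCond}. Since $\bu(t) = \I^{-1}([\balpha](t))$ lies in the fiber $\mvect(M,\bmu(t))$, the constraint \eqref{addCond}, i.e. $\sum_j \L_{u_j}\mu_j = 0$, holds for all $t$. Differentiating it in $t$ and substituting \eqref{twoPhaseEuler2} together with $\partial_t\mu_j = -\L_{u_j}\mu_j$, the terms involving $\nabla p$ combine (using $\sum_j\mu_j = \vol$) into $\L_{\nabla p}\big(\sum_j\mu_j\big) = \L_{\nabla p}\vol = (\Delta p)\,\vol$, leaving the Poisson equation
$$
(\Delta p)\,\vol = -\sum\nolimits_{j=1}^n \L_{\nabla_{u_j}u_j}\mu_j - \sum\nolimits_{j=1}^n \L_{u_j}^2\mu_j .
$$
Both sides integrate to zero over the closed manifold $M$, so $p$ exists and is unique up to an additive constant; this is precisely the asserted determination of the pressure.

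Finally, the geodesic reformulation follows from the general groupoid machinery recalled in Section~\ref{sec:gen}: the groupoid $\mdiff(M)$ is transitive (Remark~\ref{frechet}) and, by Proposition~\ref{prop:invertibility}, the $L^2$ inertia operator is an isomorphism onto the smooth dual, so Remark~\ref{rem:idofmetrics} identifies solutions of the Euler--Arnold equation with the velocity fields along geodesics of the right-invariant $L^2$-metric on the source fibers $\mdiff(M)_\bmu$. Concretely, one integrates $\partial_t\phi_j = u_j\circ\phi_j$ with $\bphi(0) = \id_\bmu$, and then the base curve $\bmu(t) = \Trg(\bphi(t)) = \bphi(t)_*\bmu$ automatically obeys \eqref{twoPhaseEuler22}. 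The one genuinely delicate point is the extraction of the common pressure and the verification that the Poisson equation above is consistently solvable (equivalently, that the constraint \eqref{addCond} is propagated by the flow); everything else is bookkeeping on top of Theorem~\ref{thmMain}.
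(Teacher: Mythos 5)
Your proposal is correct and follows essentially the same route as the paper: unwind the inertia operator so that $\alpha_j = u_j^\flat$, use the identity $i_u\,\diff u^\flat + \tfrac{1}{2}\diff(u,u) = (\nabla_u u)^\flat$ componentwise, and read off the common pressure from the fact that the coset ambiguity is the diagonal $\delta(d\Cont^\infty(M))$. The only cosmetic difference is in deriving the pressure equation — you differentiate the constraint \eqref{addCond} in $t$, whereas the paper takes the divergence of $\sum_j \rho_j(\partial_t u_j + \nabla_{u_j}u_j) = -\nabla p$ — but both manipulations yield the identical Poisson equation $-\Delta p = \div\sum_j\bigl(\rho_j\nabla_{u_j}u_j + \div(\rho_j u_j)\,u_j\bigr)$.
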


\begin{proof}
Equation \eqref{twoPhaseEuler22} was already established by Theorem \ref{thmMain}, so it suffices to derive  \eqref{twoPhaseEuler21}. The latter rewrites as
$$
\partial_t \alpha_j + i_{u_j}\diff \alpha_j   + \tfrac{1}{2}\diff i_{u_j}\alpha_j   = df\,,
$$
where $\balpha \in [\balpha]$ is the representative satisfying \eqref{eq:cc}, and $f \in \Cont^\infty(M)$ does not depend on $i$. Equivalently, this can be written as
$$
\partial_t \alpha_j + \L_{u_j} \alpha_j   - \tfrac{1}{2}\diff i_{u_j}\alpha_j = df\,.
$$
Taking the metric dual vector field and applying the formula
 $ (\L_u u^\flat - \tfrac{1}{2}\diff (u,u))^\sharp = \nabla_u u$, 
 we get
\begin{equation}\label{twoPhaseEulerDgrad}
 \partial_t \bu + \nabla_{\bu}\bu = \nabla f\,,
\end{equation}
 which is equivalent to \eqref{twoPhaseEuler21} for $p = -f$. %
 
 Now, we show that the pressure $p$ can be expressed, using conditions \eqref{twoPhaseEuler2} and \eqref{addCond}, in terms of velocity fields $u_j$ and densities $\mu_j$ (up to an additive constant). Let $\rho_j := \mu_j / \vol$. Then, from \eqref{twoPhaseEuler21}, we get
 $$
 -\nabla p = -\left(\sum\nolimits_{j=1}^n \rho_j\right) \nabla p =  -\sum\nolimits_{j=1}^n \rho_j \nabla p = \sum\nolimits_{j=1}^n \rho_j(\partial_t u_j + \nabla_{u_j}u_j).
 $$
 Taking divergence, we get
\begin{equation}\label{Poisson1}
-\Delta p = \div \sum\nolimits_{j=1}^n \rho_j(\partial_t u_j + \nabla_{u_j}u_j).
\end{equation}
 Furthermore, \eqref{addCond} can be rewritten as
 $$
 \div \sum\nolimits_{j=1}^n \rho_j u_j = 0,
 $$
 so \eqref{Poisson1} rewrites as
\begin{equation}\label{Poisson2}
 -\Delta p = \div \sum\nolimits_{j=1}^n (\rho_j \nabla_{u_j}u_j - (\partial_t \rho_j) u_j).
 \end{equation}
Also, \eqref{twoPhaseEulerCosets2} is equivalent to
$$
\partial_t \rho_j = -\div(\rho_j u_j),
$$
so \eqref{Poisson2} becomes
   $$
 -\Delta p = \div \sum\nolimits_{j=1}^n (\rho_j \nabla_{u_j}u_j + \div (\rho_j u_j) u_j),
 $$
 cf. \eqref{eq:pe}. This is a Poisson equation on $p$, so the function $p$ is indeed uniquely  determined by $u_j, \mu_j$ up to an additive constant. (Note that for $n=1$ the second term 
 in the right-hand side vanishes and one gets the standard equation for the pressure $-\Delta p = \div  \nabla_{u}u$.)
\end{proof}

Recall that for a fluid velocity field $u$, the corresponding vorticity is the $2$-form $\omega := \diff u^\flat$. For an $n$-tuple of vector fields $\bu \in \mvect(M)$, the vorticity is an $n$-tuple
$$
\omega_j := d u_j^\flat.
$$
\begin{corollary}[Generalized Kelvin's theorem]\label{cor:singKelvin}
For a multiphase fluid, the vorticity of each phase is transported by the corresponding velocity field:
$$
\partial_t \omega_j + \L_{u_j} \omega_j = 0\,.
$$
\end{corollary}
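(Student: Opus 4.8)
The plan is to read off the transport law for each phase's vorticity directly from Theorem~\ref{thmMain2}, by running the classical derivation of Kelvin's circulation theorem componentwise. The key point is that the right-invariant $L^2$ geodesic equation on the multiphase groupoid, written in velocity form \eqref{twoPhaseEuler21}, has the same shape as the ordinary Euler equation for each index $j$, with a common gradient term $-\grad p$ on the right; and a gradient is exactly what disappears under $\diff$.

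Concretely, first I would take the $j$-th momentum equation $\partial_t u_j + \nabla_{u_j} u_j = -\grad p$ from Theorem~\ref{thmMain2} and rewrite it in terms of the metric-dual $1$-form $\alpha_j := u_j^\flat$. Using the identity $(\nabla_{u_j} u_j)^\flat = \L_{u_j} u_j^\flat - \tfrac12 \diff{(u_j,u_j)}$ already employed in the proof of that theorem, the equation becomes
$$
\partial_t u_j^\flat + \L_{u_j} u_j^\flat = \diff{\left( \tfrac12 (u_j,u_j) - p \right)},
$$
whose right-hand side is an exact $1$-form on $M$. Next I would apply the exterior differential $\diff$ to both sides. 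Since $\diff$ commutes with $\partial_t$ and with the Lie derivative $\L_{u_j}$ (Cartan's formula $\L_X = \diff i_X + i_X \diff$ gives $\diff\L_X = \L_X \diff$), and since $\diff$ annihilates exact forms, everything on the right drops out and one is left with
$$
\partial_t\bigl(\diff u_j^\flat\bigr) + \L_{u_j}\bigl(\diff u_j^\flat\bigr) = 0,
$$
which is exactly $\partial_t \omega_j + \L_{u_j}\omega_j = 0$ by the definition $\omega_j := \diff u_j^\flat$. Collecting the components into the $n$-tuple $\pmb{\omega} = (\omega_1,\dots,\omega_n)$ yields the stated frozenness $\partial_t \pmb{\omega} + \L_{\bu}\pmb{\omega} = 0$.

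There is essentially no obstacle in this argument — the only things worth spelling out are that the representative $\balpha$ in Theorem~\ref{thmMain2} is indeed $\bu^\flat$ and that the pressure $p$ is a globally defined function on $M$, so that $\diff p$ (hence the whole right-hand side above) is exact; after that the computation is component-by-component identical to the single-phase case and does not see the coupling condition \eqref{addCond}. Equivalently, one may state the conclusion in integral form: for any loop $\gamma$ transported by the flow of $u_j$, the circulation $\oint_\gamma u_j^\flat$ is constant in time, which is just the integrated version of $\partial_t\omega_j + \L_{u_j}\omega_j = 0$. This also makes transparent the remark following the corollary in the introduction: since each $\omega_j$ is merely dragged along by its \emph{own} velocity field and the $u_j$ are otherwise unconstrained, the vorticities of the various phases evolve independently, so in particular $\pmb{\omega}\equiv 0$ does not pin down $\bu$.
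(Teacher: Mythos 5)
Your proposal is correct and is essentially the paper's own argument: the paper simply takes the exterior derivative of both sides of the coset equation \eqref{twoPhaseEulerCosets1}, which is exactly the $1$-form identity $\partial_t u_j^\flat + i_{u_j}\diff u_j^\flat + \tfrac12 \diff i_{u_j}u_j^\flat = \diff f$ that you recover by dualizing \eqref{twoPhaseEuler21}, and the gradient/exact terms die under $\diff$ in both versions. Your extra remarks (that the representative is $\bu^\flat$, that $p$ is globally defined, and the circulation reformulation) are correct but not needed beyond what the paper already establishes in Theorems \ref{thmMain} and \ref{thmMain2}.
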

\begin{proof}
Take the exterior derivative of both sides in \eqref{twoPhaseEulerCosets1}.
\end{proof}
In particular, vorticities remain in the same diffeomorphism class during the Euler-Arnold evolution. %
{Furthermore,  solutions for potential initial conditions remain potential for all times, as they correspond to 
the vanishing initial vorticity, which always remains zero thanks to the corollary above. In the next section we discuss 
properties of potential solutions in detail.}
%
%
%
%
%
%
%
%
%
%

  \subsection{Potential solutions as geodesics on the space of multiphase densities}
  \label{sect:pure_motion}
  Now, we apply Proposition \ref{prop:sub} to obtain a geodesic description of potential solutions.\par

    \begin{proposition}\label{dualAnchorProp}
    Let $[\bff] \in  \T^*_\bmu \mdens(M)$ (recall that the latter space is {$C_0^\infty(M)^n/\delta(C_0^\infty(M))$}). Then its image under the map $\#^* \colon \T^*_\bmu \mdens(M) \to \mvect(M, \bmu)^*$ is given by 
   \begin{align}\label{dualAnchor2}\#^*[\bff] := [d\bff].\end{align}
   (Note that the coset $[d\bF]$ of $d\bF$ in {$\Omega^1(M)^n / \delta(d\Cont^\infty(M))$} does not depend on the choice of a representative $\bF$ in the coset {$[\bF] \in \Cont_0^\infty(M)^n /\delta( \Cont_0^\infty(M) )$}.)
    \end{proposition}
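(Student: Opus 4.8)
The plan is to compute the dual anchor directly from the definition of $\#^*$ as the transpose of the anchor map $\# \colon \mvect(M,\bmu) \to \T_\bmu\mdens(M)$, paired against the smooth dual spaces introduced in Sections \ref{sect:dual} and \ref{sect:poisson_vs}. By definition, for $[\bff] \in \T^*_\bmu\mdens(M) = \Cont_0^\infty(M)^n/\delta(\Cont_0^\infty(M))$ we must have $\langle \#^*[\bff], \bu\rangle = \langle [\bff], \#\bu\rangle$ for every $\bu \in \mvect(M,\bmu)$. First I would unravel the right-hand side using the formula for the anchor from Theorem \ref{algFibers}(2), namely $\#\bu = -\L_\bu\bmu = (-\L_{u_1}\mu_1, \dots, -\L_{u_n}\mu_n)$, together with the pairing between $\T^*_\bmu\mdens(M)$ and $\T_\bmu\mdens(M)$ defined in Section \ref{sect:poisson_vs}:
\begin{equation*}
\langle [\bff], \#\bu\rangle = \sum\nolimits_{j=1}^n \int_M f_j\,(-\L_{u_j}\mu_j).
\end{equation*}

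The key step is then an integration by parts (Cartan/Stokes) on the closed manifold $M$: since $\L_{u_j}\mu_j = d\,i_{u_j}\mu_j$ and $f_j$ is a function,
\begin{equation*}
-\int_M f_j\,\L_{u_j}\mu_j = -\int_M f_j\, d\,i_{u_j}\mu_j = \int_M df_j \wedge i_{u_j}\mu_j = \int_M (i_{u_j}\,df_j)\,\mu_j = \int_M df_j(u_j)\,\mu_j,
\end{equation*}
where in the last steps I use that $df_j \wedge i_{u_j}\mu_j = (i_{u_j}df_j)\mu_j$ for a top-degree form $\mu_j$. Summing over $j$ gives $\langle [\bff], \#\bu\rangle = \sum_{j=1}^n \int_M df_j(u_j)\,\mu_j$, which by the pairing formula \eqref{eq:spairing} between $\mvect(M,\bmu)^*$ and $\mvect(M,\bmu)$ is exactly $\langle [d\bff], \bu\rangle$ with $d\bff := (df_1, \dots, df_n)$. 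Since this holds for all $\bu \in \mvect(M,\bmu)$ and the smooth dual separates points, $\#^*[\bff] = [d\bff]$.

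Finally I would address the well-definedness remark parenthetically noted in the statement: replacing a representative $\bff$ by $\bff + \delta(h)$ for $h \in \Cont_0^\infty(M)$ changes $d\bff$ by $\delta(dh)$, which lies in $\delta(d\Cont^\infty(M))$ and hence represents the same coset in $\Omega^1(M)^n/\delta(d\Cont^\infty(M)) = \mvect(M,\bmu)^*$ by Proposition \ref{prop:nnp}; and adding a constant to an individual $f_j$ (the ambiguity in $\Cont_0^\infty(M) = \Cont^\infty(M)/\R$) does not change $df_j$ at all. I do not anticipate a serious obstacle here — the only point requiring minor care is bookkeeping the two layers of quotients (the $\R$ in $\Cont_0^\infty$ and the diagonal $\delta$) so that the claimed map between cosets is genuinely well-defined, and checking that the integration by parts produces no boundary terms, which is immediate since $M$ is closed.
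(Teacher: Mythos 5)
Your proposal is correct and follows essentially the same route as the paper's proof: dualize the anchor $\#\bu=-\L_\bu\bmu$ against the pairing and integrate by parts to obtain $\sum_j\int_M (i_{u_j}df_j)\,\mu_j=\langle[d\bff],\bu\rangle$. Your extra checks (no boundary terms on a closed $M$, and well-definedness through the two quotients) are details the paper leaves implicit, but the argument is the same.
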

    \begin{proof}
    Let $[\bff] \in  \T^*_\bmu \mdens(M)$, and let $\bu \in  \mvect(M, \bmu)$. Then
    $$
    \left\langle \#^*[\bff], \bu \right\rangle = \langle [\bff], \#\bu \rangle = -\sum\nolimits_{j=1}^n\int_M  f_j \,\L_{u_j}\,\mu_j=\sum\nolimits_{j=1}^n\int_M  (i_{u_j} df_j) \,\mu_j=\langle d\bff, \bu \rangle\,.
    $$
    The result follows.
    \end{proof}

     It follows that the vector field $\bv: = \I^{-1}(\#^*[\bff])=\I^{-1}([d\bff])$ has the multiphase gradient form $\bv = \nabla \bff:=(\nabla f_1, ..., \nabla f_n)$. This means that the symplectic leaf $\#^*(\T^* \mdens(M)) \subset \mvect(M)^*$ is metric dual to velocity fields of potential motions of
  a multiphase fluid.

   \begin{theorem}\label{geodDescription}

    \begin{enumerate}
    \item Potential solutions of equations \eqref{twoPhaseEuler2} of a multiphase fluid are geodesics of a metric $\metric_{\mdens}$ on $\mdens(M)$ induced by the product Wasserstein metric on the ambient space $\Dens_{c_1}(M) \times \dots \times \Dens_{c_n}(M)$, where $\Dens_c(M)$ is the space of positive smooth densities on $M$ with total mass $c$. 
        \item For any multiphase density $\bmu \in \mdens(M)$ the groupoid target mapping $\Trg \colon (\mdiff(M)_\bmu, \metric_{L^2}) \to (\mdens(M), \metric_\mdens)$ is a Riemannian submersion, see Figure \ref{fig:submersion}. Here $ \metric_{L^2}$ is the restriction of the right-invariant source-wise metric on $\mdiff(M)$ corresponding to the $L^2$-metric on $\mvect(M)$.
    \end{enumerate}
    \end{theorem}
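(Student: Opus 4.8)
The plan is to deduce both statements from the abstract submersion result, Proposition \ref{prop:sub}, applied to the transitive Lie algebroid $\mvect(M) \to \mdens(M)$ with the $L^2$-metric \eqref{eq:metric}, and then to identify the base metric it produces with the Wasserstein-type metric $\metric_\mdens$. So the work splits into (i) checking the hypotheses of Proposition \ref{prop:sub}, (ii) computing the induced base metric and recognizing it as the product Wasserstein metric restricted to $\mdens(M)$, and (iii) pinning down which Euler--Arnold solutions live on the relevant symplectic leaf.

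For (i): transitivity of $\mvect(M)$ was noted in Remark \ref{frechet}, so the only thing to verify is the orthogonal decomposition $\mvect(M, \bmu) = \Ker \# \oplus (\Ker \#)^\bot$ for the $L^2$-metric. Here $\Ker \#$ is the space of multiphase vector fields $\bu$ with $\L_{u_j}\mu_j = 0$ for every $j$, and I claim the orthogonal complement is the space of multiphase gradients $\nabla\bg = (\nabla g_1, \dots, \nabla g_n)$ which happen to satisfy \eqref{divFree}. Orthogonality is integration by parts: $\int_M (u_j, \nabla g_j)\,\mu_j = \int_M dg_j(u_j)\,\mu_j = -\int_M g_j\,\L_{u_j}\mu_j = 0$ once $\L_{u_j}\mu_j=0$. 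For the splitting itself, given $\bu \in \mvect(M, \bmu)$ I would solve, componentwise, the weighted Poisson equation characterizing $g_j$ by $\L_{\nabla g_j}\mu_j = \L_{u_j}\mu_j$ --- the same elliptic problem already used in Proposition \ref{prop:ccRepr}, uniquely solvable modulo constants since the right side integrates to zero on the closed manifold $M$. Then $\bu - \nabla\bg \in \Ker \#$, and $\sum_j \L_{\nabla g_j}\mu_j = \sum_j \L_{u_j}\mu_j = 0$ shows $\nabla\bg$ is itself a legitimate element of $\mvect(M, \bmu)$. As usual one checks this decomposition depends smoothly on $\bmu$ by elliptic regularity.

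Granting (i), Proposition \ref{prop:sub}(2) immediately gives the Riemannian submersion $\Trg \colon (\mdiff(M)_\bmu, \metric_{L^2}) \to (\mdens(M), \metric_\B)$, with $\langle \bxi, \bxi\rangle_\B = \langle \#^{-1}\bxi, \#^{-1}\bxi\rangle_{L^2}$ by \eqref{inducedMetric}, where $\#^{-1}\bxi$ is the unique multiphase gradient $\nabla\bg$ in $\mvect(M,\bmu)$ with $-\L_{\nabla g_j}\mu_j = \xi_j$. To identify $\metric_\B$ with $\metric_\mdens$ I would invoke the Benamou--Brenier/Otto formula: on each factor $\Dens_{c_j}(M)$ the squared Wasserstein norm of a tangent vector $\xi_j$ is $\inf\{\int_M (v_j,v_j)\,\mu_j : -\L_{v_j}\mu_j = \xi_j\}$, with minimizer the gradient field $\nabla g_j$ from the weighted Poisson equation above; summing over $j$ and restricting to the submanifold $\mdens(M)$ (where $\sum_j \xi_j = 0$, which is exactly what makes $\nabla\bg$ satisfy \eqref{divFree}) gives $\sum_j \int_M (\nabla g_j, \nabla g_j)\,\mu_j = \langle \#^{-1}\bxi, \#^{-1}\bxi\rangle_{L^2}$, i.e. precisely $\metric_\B$. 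This settles part 2. For part 1, recall from the discussion before the theorem (via Proposition \ref{dualAnchorProp}) that the symplectic leaf $\#^*(\T^*\mdens(M)) \subset \mvect(M)^*$ corresponds under $\I^{-1}$ to the multiphase gradient velocity fields $\bv = \nabla\bff$; by Proposition \ref{prop:sub}(1) the groupoid Euler--Arnold flow on that leaf pulls back through $\#^*$ to the geodesic flow of $\metric_\mdens$, so since the base component of a solution of \eqref{twoPhaseEuler2} evolves by $\partial_t\bmu = -\L_\bu\bmu$ (equation \eqref{twoPhaseEuler22}), the density path of any potential solution is a $\metric_\mdens$-geodesic. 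Finally this family of solutions is time-invariant: a potential field has vanishing multiphase vorticity $\omega_j = du_j^\flat = 0$ (and vanishing circulations), and Corollary \ref{cor:singKelvin} together with Kelvin's circulation theorem keeps these zero, so potential initial data remain potential.

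I expect the genuine difficulty to be not in the abstract framework --- Proposition \ref{prop:sub} does the heavy lifting --- but in the two concrete identifications: establishing the weighted Hodge-type decomposition $\mvect(M,\bmu) = \Ker \# \oplus (\Ker \#)^\bot$ smoothly over the Fr\'echet manifold $\mdens(M)$ (uniform elliptic control of the family of weighted Laplacians appearing in Proposition \ref{prop:ccRepr}), and the matching of the induced base metric with the product Wasserstein metric on $\mdens(M)$. The clean point making the latter work is that the optimal transport velocity in each component is automatically a gradient, and the defining constraint $\sum_j \xi_j = 0$ of $\T_\bmu\mdens(M)$ is exactly the condition turning the corresponding gradient tuple into an admissible element of $\mvect(M,\bmu)$ --- so the restriction of the product Wasserstein metric to $\mdens(M)$ agrees with $\metric_\B$ with no residual infimum.
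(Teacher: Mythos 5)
Your proposal is correct and follows essentially the same route as the paper: verify the hypotheses of Proposition \ref{prop:sub} by producing the orthogonal decomposition $\mvect(M,\bmu)=\Ker\#\oplus(\Ker\#)^\bot$ via the componentwise weighted Poisson equation $\L_{\nabla f_j}\mu_j=\L_{u_j}\mu_j$, then read off the induced base metric from \eqref{inducedMetric} and identify it with the product Wasserstein metric through the Otto/Benamou--Brenier characterization. Your additional observations --- that $\sum_j\xi_j=0$ is exactly what makes the gradient tuple admissible so the restricted product Wasserstein metric is attained with no residual infimum, and that potentiality is preserved by Corollary \ref{cor:singKelvin} --- are correct refinements of points the paper states more briefly.
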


\begin{remark}\label{rem:Was}
Recall that the Wasserstein metric on the space $\Dens_c(M)$ of densities of fixed total volume $c$ on $M$ 
is defined as follows: for any tangent vector $\xi\in T_\mu\Dens_c(M)$ its square length is
$$
      \langle \xi, \xi \rangle_W := \inf \left\{\, \langle u, u \rangle_{L^2} \mid  u \in \Vect(M),\, \L_u\mu = \xi \right\}
      = \langle \nabla f, \nabla f \rangle_{L^2} \,,
$$
where $f \in C^\infty(M)$ is such that $\L_{\nabla f}\mu = \xi $.
\end{remark}

    \begin{proof}[Proof of Theorem \ref{geodDescription}]
    We first prove the existence of a metric $\metric_{\mdens}$  with desired properties, and then show that it is induced by the Wasserstein metric. To prove existence, we use Proposition~\ref{prop:sub}. %
  To apply 
   that proposition we need to show that $\mvect(M) = \Ker \# \oplus (\Ker \#)^\bot$. Take any $\bu \in \mvect(M, \bmu)$.  Consider functions $\bff=(f_1,...,f_n)$ satisfying      $\L_{u_j}\mu_j=\L_{\nabla f_j}\mu_j$ (construction of such functions boils down to the solution of the Poisson equation               ${\rm div}\,{\rho_j} \nabla f_j= {\rm div} \,{\rho_j} {u_j} $ on $f_j$). 
Then one has
\begin{equation}\label{eq:Hodge}
    \bu = (\bu - \nabla \bff) + \nabla\bff.
 \end{equation}
 Notice that $\L_{u_j - \nabla f_j}\mu_j = 0$, so $ \bu - \nabla \bff \in \Ker \# $. Furthermore, $\Ker \# $ consists of all multiphase vector fields $\bu \in \mvect(M,\bmu)$ which satisfy the divergence-free condition $\L_{u_j}\mu_j = 0$ and hence {are} orthogonal  to multiphase gradients with respect to the metric \eqref{eq:metric}. In particular, we have $\nabla\bff \in (\Ker \#)^\bot$. Thus we obtain a decomposition $\mvect(M) = \Ker \# \oplus (\Ker \#)^\bot$, and hence, by Proposition \ref{prop:sub}, a metric  $\metric_{\mdens}$ with the listed properties.
    
   To show that the metric  $\metric_{\mdens}$ is induced by the Wasserstein metric,   observe from \eqref{eq:Hodge} that the map $\#^{-1} \colon \T_\bmu\mdens(M) \to (\Ker \#)^\bot$ is given by $\#^{-1}(\bxi) = \grad \bF$ where the multiphase function $\bF$ is found from the requirement $\xi_j=\L_{\nabla f_j}\mu_j$. Plugging this into \eqref{inducedMetric}, we see that the metric $\metric_{\mdens}$  computed on any $\bxi \in \T \mdens(M)$ is indeed the product Wasserstein metric, as needed.
    \end{proof}

    Recall that $\mdiff(M, \bmu)$ is the configuration space of a multiphase fluid. The motion of the fluid follows the geodesics of the $\metric_{L^2}$-metric on the space $\mdiff(M)$. Potential solutions thus correspond to horizontal (with respect to the target mapping) geodesics.\par
     
 \begin{remark}
    The metric $\metric_\mdens$ constructed above 
    can also be defined as follows: 
    $$
        \langle \bxi, \bxi \rangle_\mdens = \inf \left\{{\langle \bu, \bu \rangle_{L^2} \mid  \bu \in \mvect(M, \bmu), \,\# \bu = \bxi} \right\}\,
        $$
        for any $\bxi\in T_\bmu\mdens(M)$.
        This directly follows from its Riemannian submersion property.
     \end{remark}

\begin{remark}The Riemannian metric on $\mdens(M)$ makes the latter into a metric space with the distance between multiphase densities $\bmu$ and $\tilde\bmu$ satisfying the following inequality:

$$
{\rm dist}_{\mdens}^2(\bmu,\tilde\bmu)\ge \sum_{j=1}^n {\rm dist}_{W}^2(\mu_j,\tilde\mu_j)\,, %
$$
where ${\rm dist}_{W}$ is the Wasserstein distance on $\W(M)$. In particular, the distance function ${\rm dist}_{\mdens}$ is non-degenerate (i.e. ${\rm dist}_{\mdens}(\bmu,\tilde\bmu) >0$ whenever $\bmu \neq \tilde \bmu$).

\end{remark}

\section{Groupoid of generalized flows}\label{sec:GF}

The above consideration can be extended to the case of ``continuous" index $i$, i.e. to multiphase flows where phases are enumerated by a continuous
parameter $a$ which belongs to a measure space $A$. %
Below we adapt all the above definitions and statements to that setting, while the proofs are valid 
{\it mutatis mutandis.} 

Consider a closed compact manifold $M$ with a fixed volume form $\vol$ and a measure space $A$ with a fixed function  $c_a \colon A \to \R$. 
 The base of the Lie groupoid $\gdiff(M)$ of volume-preserving generalized diffeomorphisms is the space $\gdens(M)$ of  generalized densities, i.e. sets of densities $ \bmu:=\{\mu_a \in \Dens(M) \mid a\in A\}$ satisfying the conditions: all $\mu_a$ are positive, have prescribed masses 
 $c_a$, i.e. $ \int_M  \mu_a = c_a$, and they together constitute  the volume form $\vol$, i.e. \begin{equation}\label{eq:tv}\int_A \mu_a \,da = \vol\end{equation} at each point of $M$. ({Here and in what follows we assume that the dependence of all objects on $a \in A$ is such that the integrals below are well-defined. A particular example of such a setting is described in Remark \ref{rm:smooth}.)
Now one can think of those densities as a set $A$ of different  
fractions of an incompressible fluid, penetrating through each other without resistance.

The elements of  $\gdiff(M)$ are sets of diffeomorphisms $\bphi:=\{\phi_a \in \Diff(M) \mid a\in A\}$
of $M$ preserving the above property of incompressibility of generalized densities,
i.e. the set of tuples $(\bphi\,;\bmu, \bmu'):=\{(\phi_a;\mu_a, \mu'_a) \mid a\in A\}$ with generalized forms $\bmu, \bmu'\in \gdens(M)$ such that  $\bphi_*\bmu=\bmu'$ component-wisely, i.e. $\phi_{a*}\mu_a=\mu'_a$ for each $a\in A$. 
The source, target and multiplication (i.e. composition) of such triples is given exactly as before.

Similarly, the space of velocities for a generalized fluid, i.e. the Lie algebroid $\gvect(M) \to \gdens(M)$
corresponding to the Lie groupoid $\gdiff(M)$, is a vector bundle with the following 
structure. Its fiber of $\gvect(M)$ over $\bmu  \in \gdens(M)$ is the space $\gvect(M, \bmu)$ that  consists
 of generalized vector fields on $M$ of the form 
$\bu := \{ u_a \mid a\in A\} $ with  $u_a\in \vect(M)$ that are ``divergence-free" with respect to the generalized volume form: $\int_A \L_{u_a}\mu_a \,da=0$.
The corresponding anchor map  $
 \# \colon \gvect(M, \bmu) \to \T_\bmu \gdens(M)\, $ is given by the negative Lie derivative, $ \bu \mapsto -\L_\bu\bmu:=
\{ -\L_{u_a}\mu_a \mid a\in A\}\,,$ and the algebroid bracket is given by the same formula \eqref{sectionsBracket}.
The tangent space  $\T_\bmu \gdens(M)$ is the space of generalized forms $\bxi$ satisfying the two conditions: 
  $\int_A \xi_a\,da =0$ on $M$ and $\int_M \xi_a = 0$ for all  $a\in A$ (cf. Lemma \ref{lemma:vstanspace}).

\begin{remark}\label{rm:smooth} In the case when $A$ is a manifold and the dependence of all objects on $a \in A$ is smooth, the above setting can also be reformulated as follows.
Consider compact manifolds $M$ and $A$ with fixed volume forms $\vol$ and $\volA$ respectively. The base of the Lie groupoid $\gdiff(M)$ is the space $\gdens(M)$ of \textit{doubly stochastic measures} on $M \times A$ with everywhere positive smooth density, i.e. volume forms $\bmu \in \Dens(M \times A)$ such that $(\pi_M)_*\bmu = \vol$ and $(\pi_A)_*\bmu = \volA$, where $\pi_M$ and $\pi_A$ are projections to $M$ and $A$ respectively. The above description of $\gdens(M)$ is then recovered by viewing a doubly stochastic measure $\bmu$ as a collection of measures $\mu_a$ parametrized by $a \in A$ which have fixed volumes (defined by the measure $\volA$) and add up to the measure $\vol$. %

The elements of the groupoid $\gdiff(M)$ in this language  are horizontal diffeomorphisms $\bphi \in \Diff(M \times A)$ which take one doubly stochastic measure to another. 
More precisely, $\gdiff(M)$ is the set of triples $(\bphi\,;\bmu, \bmu')$ where $\bphi \in \Diff(M \times A)$ is of the form $(x,a) \mapsto (\phi_a(x), a)$ and maps  $\bmu \in \gdens(M)$ to  $\bmu' \in \gdens(M)$, i.e.  $\bphi_*\bmu=\bmu'$. %

The Lie algebroid $\gvect(M) \to \gdens(M)$
corresponding to the Lie groupoid $\gdiff(M)$  is a vector bundle with the following 
structure. Its fiber of $\gvect(M)$ over $\bmu  \in \gdens(M)$ is the space $\gvect(M, \bmu)$ that  consists
 of vector fields  $\bu \in \Vect(M \times A)$ which are horizontal (i.e. tangent to fibers of the projection $\pi_A$) and ``divergence-free" in the sense that $(\pi_M)_* \L_{\bu}\bmu = 0\,. $

The  anchor map  $
 \# \colon \gvect(M, \bmu) \to \T_\bmu \gdens(M)\, $ in the algebroid $\gvect(M)$ is given by the negative Lie derivative, $ \bu \mapsto -\L_\bu\bmu\,,$ and the algebroid bracket is given by the same formula \eqref{sectionsBracket}.
The tangent space  $\T_\bmu \gdens(M)$ is the space of top-degree forms $\bxi$ on $M \times A$ such that $(\pi_M)_*\bxi = 0$ and $(\pi_A)_*\bxi = 0$.

\end{remark}

Returning to the general case of a measure space $A$, the smooth dual  $\gvect(M,\bmu)^*$ of the space $\gvect(M,\bmu)$  is defined as the quotient
$$
\gvect(M,\bmu)^*:=\Omega^1(M)^A/\delta(d\Cont^\infty(M))\,,
$$
where $\Omega^1(M)^A$  stands for functions $A \to \Omega^1(M)$. 
The elements of $\gvect(M,\bmu)^*$ are cosets 
$$
[\balpha]:=\{\alpha_a +df \mid f\in \Cont^\infty(M) \},
$$
where all $1$-forms $\alpha_a\in \Omega^1(M)$ in one coset differ by the same function differential.
The pairing between a coset $[\balpha] \in  \gvect(M, \bmu)^*$ and a generalized vector field $\bu  \in   \gvect(M,\bmu)$ is given by the formula: $$\langle[\balpha], \bu\rangle :=  \int_A  \int_M  \alpha_a(u_a)\,  \mu_a\, da.$$
As before, the dual algebroid is the  total space
 $$
 \gvect(M)^*\,:= \bigcup\nolimits_{\bmu \in \gdens(M)} \gvect(M,\bmu)^*\,,
 $$ 
which is a trivial vector bundle  over the space of generalized densities $\gdens(M)$.

The dual algebroid is a Poisson bundle, and the  Poisson bracket on this space is given by a formula analogous to \eqref{pbf}.
As a corollary, we obtain the  Hamiltonian operator %
 $$
 \P_{[\balpha], \,\bmu}^\sharp \colon \T^*_{[\balpha], \,\bmu}\gvect(M)^* \to  \T_{[\balpha], \,\bmu}\gvect(M)^*
 $$
 corresponding to the Poisson bracket on $\gvect(M)^* $  given by the same formula \eqref{HamOperator}:
$$
  (\bu, [\bF]) \mapsto (-[i_{\bu}\diff \balpha] - [d\bF], -\L_\bu \bmu)\,.
$$

To describe geodesics on the space  of generalized solutions we equip the manifold $M$ with a Riemannian metric $(\,,)$ whose Riemannian 
volume form is $\vol$. As before, to simplify the exposition, $M$ is a compact connected oriented manifold without boundary, although the results 
extend to noncompact $M$ by imposing appropriate decay assumptions. 

The  $\metric_{L^2}$  metric on the Lie algebroid $\gvect(M)$ is as follows: for $\bu, \bv \in \gvect(M, \bmu) $, one has
\begin{equation}\label{eq:pairing}
\langle \bu, \bv \rangle_{L^2} := \int_A \int_M (u_a, v_a) \,\mu_a\,da.
\end{equation}
The inertia operator $\I: \gvect(M)\to \gvect(M)^*$ associated with this $L^2$-metric  on $\gvect(M)$ is as follows.
For $\bu \in \gvect(M, \bmu)$ one has
$
\I(\bu) = [\bu^\flat]
$,
where $\bu^\flat := \{ u_a^\flat \mid a\in A \}$. Here  $u_a^\flat$ is the 1-form metric-dual to the vector field $u_a$ on $M$, and $[\bu^\flat]$ stands for the coset of $\bu^\flat$ in  $ \Omega^1(M)^A \, / \, \delta(d\Cont^\infty(M)) $.

The corresponding Euler-Arnold Hamiltonian on  $\gvect(M)^*$ is 
$$
\mathcal H\left([\balpha], \bmu\vphantom{}\right) := \tfrac{1}{2} \int_{A} \int_M (\alpha_a, \alpha_a) \,\mu_a \,da,
$$
where $\balpha \in [\balpha]$ is a representative satisfying the following co-closedness type condition:
\begin{equation}\label{eq:cc2}d^*\int_A\rho_a \alpha_a\,da = 0\,,\end{equation}
for $\rho_a : = \mu_a / \vol$ {(this is just the condition \eqref{eq:nde} written in terms of the multiform $\balpha = \bu^\flat$)}.
 With this adjustment of notations the following theorem literally repeats 
Theorem \ref{thmMain} and provides the Hamiltonian framework for generalized flows:

\begin{theorem}\label{thmMain3}
The Euler-Arnold equation for generalized flows corresponding to the $L^2$-metric on $\gvect(M)$ written in terms of a coset $[\balpha] \in \gvect(M, \bmu)^*$ reads
\begin{align}\label{twoPhaseEulerCosets3}
\begin{cases}
\partial_t [\balpha] + [i_\bu \diff \balpha + \tfrac{1}{2}\diff i_\bu\balpha]  =0\,,\\
\qquad \partial_t \bmu = -\L_\bu \bmu\,,
\end{cases}
\end{align}
where $\balpha \in [\balpha]$ is the representative satisfying \eqref{eq:cc2}, and $\bu := \balpha^\sharp \in \gvect(M, \bmu)$. 
It is a Hamiltonian equation on the algebroid dual $\gvect(M)^*$ with respect to the natural Poisson structure described above and the energy Hamiltonian function~$\mathcal H$.
\end{theorem}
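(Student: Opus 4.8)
The plan is to run the proof of Theorem~\ref{thmMain} essentially verbatim, since everything needed on the continuous side has already been assembled in this section: the smooth dual $\gvect(M)^* = (\Omega^1(M)^A/\delta(d\Cont^\infty(M)))\times\gdens(M)$, its Poisson structure given by the $A$-integrated analogue of \eqref{pbf}, and the corresponding Hamiltonian operator $(\bu,[\bF])\mapsto(-[i_\bu\diff\balpha]-[d\bF],-\L_\bu\bmu)$, which is the literal analogue of \eqref{HamOperator}. By definition the Euler-Arnold equation is the Hamiltonian equation $\partial_t([\balpha],\bmu)=\P^\sharp_{[\balpha],\bmu}\bigl(d\H([\balpha],\bmu)\bigr)$ for the energy Hamiltonian $\H([\balpha],\bmu)=\tfrac{1}{2}\int_A\int_M(\alpha_a,\alpha_a)\,\mu_a\,da$, so the proof reduces to computing $d\H$ and applying this operator.

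First I would compute the fiber and base differentials of $\H$. Fix a smooth curve $([\balpha](s),\bmu(s))$ through $([\balpha],\bmu)$ and let $\balpha(s)$ be the representative satisfying the co-closedness condition \eqref{eq:cc2}; set $\bu:=\balpha^\sharp$. Differentiating $\H([\balpha](s),\bmu(s))$ at $s=0$ produces $\int_A\int_M(\alpha_a,\dot\alpha_a)\mu_a\,da+\tfrac{1}{2}\int_A\int_M(\alpha_a,\alpha_a)\dot\mu_a\,da$. The first summand equals $\langle\bu,\tfrac{d}{ds}\big|_{0}[\balpha]\rangle$, which depends only on the coset because $\bu\in\gvect(M,\bmu)$ satisfies $\int_A\L_{u_a}\mu_a\,da=0$ and hence pairs to zero with diagonal exact multiforms; the second equals the pairing of $\tfrac{1}{2}[(\balpha,\balpha)]\in\Cont_0^\infty(M)^A/\delta(\Cont_0^\infty(M))$ with $\tfrac{d}{ds}\big|_{0}\bmu\in\T_\bmu\gdens(M)$, which is well-defined by the zero-sum condition on the latter. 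Hence $d^F\H([\balpha],\bmu)=\bu$ and $d^B\H([\balpha],\bmu)=\tfrac{1}{2}[(\balpha,\balpha)]$. Plugging these into the Hamiltonian operator and using $(\alpha_a,\alpha_a)=i_{u_a}\alpha_a$ (since $\alpha_a=u_a^\flat$) gives $\partial_t[\balpha]=-[i_\bu\diff\balpha]-\bigl[d\bigl(\tfrac{1}{2}(\balpha,\balpha)\bigr)\bigr]=-[i_\bu\diff\balpha+\tfrac{1}{2}\diff i_\bu\balpha]$ together with $\partial_t\bmu=-\L_\bu\bmu$, i.e. precisely \eqref{twoPhaseEulerCosets3}; the Hamiltonian claim is then automatic, being how the equation was produced.

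The supporting facts behind this, all obtained by the same arguments as in the discrete case, are: existence and uniqueness up to a constant of the co-closed representative $\balpha$ of a given coset satisfying \eqref{eq:cc2}, from solvability of the Poisson equation $\Delta f=-d^*\!\int_A\rho_a\alpha_a\,da$ on the compact manifold $M$ (analogue of Proposition~\ref{prop:ccRepr}); invertibility of the inertia operator $\I(\bu)=[\bu^\flat]$ with $\I^{-1}([\balpha])=\balpha^\sharp$ for that representative (analogue of Proposition~\ref{prop:invertibility}); and well-definedness of the Poisson bracket itself, i.e. the analogue of Theorem~\ref{thm:bracket}, which goes through once the bracket formula \eqref{sectionsBracket} on sections of $\gvect(M)$ is in place.

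I expect the only real obstacle to be analytic rather than structural: one must make sure that differentiation under the $\int_A$ sign, the interchange of $\int_A$ with $d^*$, $\L$ and $d$, and the pairing identifications above are all legitimate for the chosen class of $a$-dependence (for instance the smooth doubly stochastic picture of Remark~\ref{rm:smooth}, or whatever minimal regularity in $a$ is imposed so the integrals over $A$ make sense). Once that class is fixed, the algebra is identical to the multiphase case and no new idea is required; the theorem ``literally repeats'' Theorem~\ref{thmMain} exactly as the surrounding text claims.
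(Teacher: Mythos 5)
Your proposal is correct and follows exactly the route the paper takes: the paper proves Theorem~\ref{thmMain3} by declaring that the discrete argument of Theorem~\ref{thmMain} carries over \emph{mutatis mutandis}, and your computation of $d^F\H=\bu$, $d^B\H=\tfrac{1}{2}[(\balpha,\balpha)]$ followed by an application of the Hamiltonian operator \eqref{HamOperator}, supported by the continuous analogues of Propositions~\ref{prop:ccRepr} and~\ref{prop:invertibility} and Theorem~\ref{thm:bracket}, is precisely that adaptation. Your closing caveat about regularity in $a$ matches the paper's standing assumption that the $a$-dependence is such that all integrals over $A$ are well defined.
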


Let us rewrite explicitly the Euler-Arnold equations in terms of fluid velocities of generalized flows.

\begin{theorem}\label{thmMain4}
The Euler-Arnold equations corresponding to the $L^2$-metric on $\gvect(M)$ written in terms of the fluid velocities $\bu$ and density $\rho_a = \mu_a / \vol$ coincide with generalized flow equations (1.13) - (1.15) of \cite{brenier2}:
{
\begin{subnumcases}{}%
\partial_t (\rho_a u_a) + \div(\rho_a u_a \otimes u_a) + \rho_a \grad p = 0,\label{eq:b3}\\
\partial_t \rho_a + \div(\rho_a u_a ) = 0,\label{eq:b2}
\end{subnumcases}
subject to the constraint $\int_A \rho_a \,da = 1,$}
where the pressure $p \in \Cont^\infty(M)$ is common for all phases and is defined uniquely up to an additive constant by these equations.

Equivalently, the generalized flow equations \eqref{eq:b3}-\eqref{eq:b2} describe the velocity along a geodesic for the $L^2$ metric on a source fiber of the groupoid $\gdiff(M)$.
\end{theorem}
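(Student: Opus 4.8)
The plan is to derive Theorem~\ref{thmMain4} from Theorem~\ref{thmMain3} in exactly the same way that Theorem~\ref{thmMain2} is derived from Theorem~\ref{thmMain}, the only genuinely new feature being the passage from a finite index $j \in \{1,\dots,n\}$ to a continuous index $a \in A$. So the argument is essentially a transcription, and I will emphasize the steps where the transcription needs care rather than the routine computations.

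First I would start from the coset form \eqref{twoPhaseEulerCosets3} supplied by Theorem~\ref{thmMain3}, choose the representative $\balpha = \{\alpha_a\}$ of $[\balpha]$ satisfying the co-closedness condition \eqref{eq:cc2}, and set $\bu := \balpha^\sharp \in \gvect(M,\bmu)$. The first line of \eqref{twoPhaseEulerCosets3} then says that $\partial_t \alpha_a + i_{u_a}\diff \alpha_a + \tfrac12 \diff i_{u_a}\alpha_a = \diff f$ for a single function $f \in \Cont^\infty(M)$ not depending on $a$ (two representatives of a coset in $\Omega^1(M)^A/\delta(d\Cont^\infty(M))$ differ by a common differential). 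Rewriting $i_{u_a}\diff \alpha_a = \L_{u_a}\alpha_a - \diff i_{u_a}\alpha_a$, passing to metric-dual vector fields, and using the pointwise identity $(\L_u u^\flat - \tfrac12 \diff (u,u))^\sharp = \nabla_u u$, I obtain $\partial_t u_a + \nabla_{u_a}u_a = \nabla f$, i.e.\ equation \eqref{eq:gfeuler} with $p = -f$. The second line of \eqref{twoPhaseEulerCosets3}, written componentwise as $\partial_t \mu_a = -\L_{u_a}\mu_a$ and divided by $\vol$ after setting $\mu_a = \rho_a\vol$, is precisely \eqref{eq:b2}; multiplying \eqref{eq:gfeuler} by $\rho_a$ and using \eqref{eq:b2} converts it into the conservative form \eqref{eq:b3}, as in the Remark following \eqref{eq:gf}. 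The constraint $\int_A \rho_a\,da = 1$ is the defining condition \eqref{eq:tv} of $\gdens(M)$ rescaled by $\vol$, and it is preserved along the flow because $\bmu(t)$ stays in $\gdens(M)$.

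Next I would establish that the pressure is determined by the velocities and densities up to an additive constant. Integrating \eqref{eq:b2} over $A$ and using \eqref{eq:tv} gives $\div \int_A \rho_a u_a\,da = 0$, which is \eqref{eq:nde}, equivalently the co-closedness condition \eqref{eq:cc2}; then multiplying \eqref{eq:gfeuler} by $\rho_a$, integrating over $A$, using $\int_A \rho_a\,da = 1$, taking the divergence, and invoking \eqref{eq:nde} to cancel the time-derivative term yields the Poisson equation $\Delta p = -\div \int_A \div(\rho_a u_a \otimes u_a)\,da$, cf.\ \eqref{eq:pe}. On the closed manifold $M$ this determines $p$ uniquely up to a constant. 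Finally, the geodesic statement follows from Remark~\ref{rem:idofmetrics}: since $\gdiff(M)$ is a Lie groupoid and the $L^2$-metric on $\gvect(M)$ is the restriction of the right-invariant source-wise $L^2$-metric on $\gdiff(M)$, the groupoid Euler-Arnold solutions are exactly the velocities along geodesics of that metric on the source fibers $\gdiff(M)_\bmu$, and unwinding this identification gives the last sentence of the theorem.

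The main obstacle is not any single computation but the foundational setup of the continuous-index case: one must know that $\gdiff(M)$ is a genuine (Fr\'echet, or Sobolev $H^s$) Lie groupoid with a well-defined algebroid $\gvect(M)$, so that the abstract machinery of Section~\ref{sec:gen} and Theorem~\ref{thmMain3} applies; one must ensure that all integrals over the measure space $A$ converge and that differentiation under the integral sign in $a$ is legitimate (this is where the regularity hypotheses on the $a$-dependence, cf.\ Remark~\ref{rm:smooth}, enter); and one needs solvability of the Poisson equations both for the co-closed representative satisfying \eqref{eq:cc2} and for the pressure $p$ --- immediate on a closed $M$, but requiring decay assumptions in the noncompact case. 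These are exactly the points the authors summarize as holding \emph{mutatis mutandis}, and they are where the real care is needed.
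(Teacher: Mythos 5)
Your proposal is correct and follows essentially the same route as the paper: the authors likewise obtain \eqref{eq:b4}--\eqref{eq:b5} by transcribing the argument of Theorem~\ref{thmMain2} to the continuous index via Theorem~\ref{thmMain3}, pass to the conservative form \eqref{eq:b3} with the identity $\div(u \otimes v) = \div(u)v + \nabla_u v$, and read off the constraint from \eqref{eq:tv}, with the pressure determined by the analogous Poisson equation. Your additional remarks on the \emph{mutatis mutandis} foundational points (groupoid/algebroid setup, integrability in $a$, solvability of the Poisson equations) match what the authors defer to Section~\ref{sec:GF} and Remark~\ref{rm:smooth}.
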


\begin{proof}
The constraint $\int_A \rho_a \,da = 1$  
follows from \eqref{eq:tv} and the definition  $\rho_a = \mu_a / \vol$ of the density function. 
Using Theorem \ref{thmMain3} and the same argument as in the proof of Theorem \ref{thmMain2}, one gets the equations
\begin{subnumcases}{}
\partial_t u_a + \nabla_{u_a}u_a = -\grad p \label{eq:b4}, \\ 
 \qquad \partial_t \mu_a = -\L_{u_a}\mu_a \label{eq:b5}.
\end{subnumcases}
Clearly, \eqref{eq:b5} is equivalent to \eqref{eq:b2}, so it only remains to derive \eqref{eq:b3}. Using \eqref{eq:b2} and \eqref{eq:b4}, we obtain
\begin{align}
\partial_t (\rho_a u_a) &= (\partial_t \rho_a) u_a + \rho_a (\partial_t u_a) = -\div(\rho_a u_a) u_a - \rho_a(\nabla_{u_a}u_a + \grad p)
\\ &= -(\div(\rho_a u_a) u_a +\nabla_{\rho_au_a}u_a ) -\rho_a \grad p = -\div(\rho_a u_a \otimes u_a) - \rho_a \grad p\,,
\end{align}
where the last equality follows from the identity $\div(u \otimes v) = \div(u)v + \nabla_u v$. Thus, \eqref{eq:b3} is equivalent to \eqref{eq:b4}, as required.
\end{proof}

\begin{remark}
The quantity $\pmb{m}:=\pmb{\rho} \bu^\flat=\{\rho_a u_a^\flat \mid \,a\in A\}$ has the physical meaning of the momentum. 
In terms of momentum the metric \eqref{eq:pairing} 
assumes a simpler form  $\langle \bu, \bv \rangle_{L^2} = \int_A \int_M m_a(v_a)\, \vol \,da$ and the equations \eqref{eq:b3}-\eqref{eq:b2} are often written on $m_a$, cf.~\cite{brenier2}.
\end{remark}

 {
\section{Open problems}\label{sec:open}
Arnold's original insight in \cite{Arn66} uncovered the geometry behind the hydrodynamic Euler equation:  for an ideal fluid confined to a fixed domain the Euler equation describes the geodesic flow for the energy metric on the Lie group of volume-preserving 
diffeomorphisms of  that domain. The analytical part of this approach is due to Ebin and Marsden~\cite{EbMars70}
who proved short-time existence in the setting of Sobolev spaces $H^s$, where $s$ is sufficiently large ($s>\dim M/2+1$).

The present paper can be regarded as an analog of Arnold's take by providing the geometric framework of Lie groupoids, instead of Lie groups,
for the Euler equation for multiphase fluids and generalized flows. We hope that it will encourage the appearance of necessary analytical setting, in the form of existence theorems in appropriate Sobolev or tame Fr\'echet spaces. 

Here we summarize several open problems motivated by the groupoid approach:

\smallskip

-- Provide an analytic framework and existence theorems for the Euler equation for multiphase fluids and generalized flows, extending the Ebin--Marsden setting \cite{EbMars70} from groups to groupoids of diffeomorphisms.

\smallskip

-- There are (at least) two different definitions of generalized flows, both suggested by Y.~Brenier: the one discussed above, as a continuum version of multiphase flows \cite{brenier, brenier2}, and the other via 
 probabilistic measures on the space of all parametrized continuous paths $ X =~C([0, 1];M)$ satisfying the incompressibility and finiteness of action conditions  \cite{brenier0}, see also~\cite{AK, Sh2}. 
Their equivalence is intuitively assumed but, to the best of our knowledge, not written up. Once it is formally established, it would open  new ways of applying groupoids in probabilistic settings. 

\smallskip

-- There is a natural semigroup of continuous maps, in which fluid particles are allowed to collide and stick to each other.
In that setting compositions of maps are well-defined but inversion is not, cf. \cite{brenier2}, which seems to be an appropriate framework for the description of shock waves in fluids.  While there seem to be a projection from the 
diffeomorphism groupoid to the semigroup of maps, the corresponding Hamiltonian picture for the semigroup is rather obscure.

\smallskip

-- In the appendix below we give a groupoid description of vortex sheets \cite{IK}, which can be thought of as the limiting case for the relaxed problem of evolution of homogenized vortex sheets or miltiphase flows, see \cite{brenier, Loesch}.
It would be interesting to obtain a rigorous treatment of this limiting procedure in the Lagrangian and Hamiltonian setting.

\smallskip

-- Finally, it would be interesting to apply the framework of Euler-Arnold equations on Lie groupoids, along with the corresponding Hamiltonian framework on Lie algebroid duals, to other problems in mathematical physics, both in finite and infinite dimensions. This approach seems natural in the situations where the group symmetry is not available, e.g. fluids with dynamic boundary, a rigid body moving in a manifold, etc.
}


  \section{Appendix: Dynamics of classical vortex sheets}
  \refstepcounter{AppCounter}
\label{appA}

Classical vortex sheets can be thought of as a particular case (or, rather, as belonging to a closure) of multiphase fluids
 where the densities are  indicator functions of open sets separated by a hypersurface in a manifold  $M$, see \cite{Loesch, IK}. 
Namely, the multiphase Lie groupoid in that case becomes  the Lie  groupoid $\DSDiff(M)$ of volume-preserving diffeomorphisms 
of $M$ that are discontinuous along a hypersurface. The elements of  the groupoid
$\DSDiff(M)$ are  quadruples $(\Sheet_1, \Sheet_2, \phi^+, \phi^-)$, where 
$\Sheet_1, \Sheet_2 \in \VS(M) $ are hypersurfaces (vortex sheets) in $M$ confining the same total volume, while  $\phi^\pm \colon \Dom^\pm_{\Sheet_1} \to \Dom^\pm_{\Sheet_2}$ 
are volume-preserving diffeomorphisms between  connected  components of $M\, \setminus \, \Sheet_i$ 
denoted by $\Dom_{\Sheet_i}^+, \Dom_{\Sheet_i}^-$.  
The multiplication of the quadruples in $\DSDiff(M)$ is given by the natural composition of discontinuous diffeomorphisms and is shown in Figure \ref{fig:composition}.
\medskip

 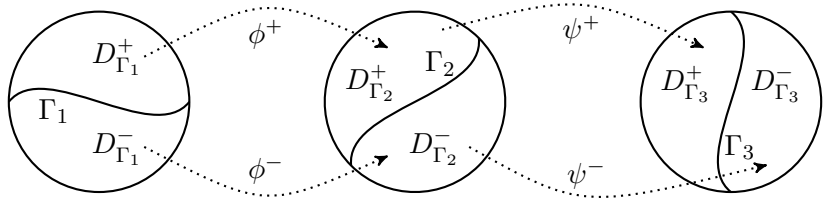
\begin{figure}[b]
\centerline{
\begin{tikzpicture}[thick, scale = 1.2]
 \node  at (0,0) () {
 \begin{tikzpicture}[thick, scale = 1.2]
    \draw (1, 1) ellipse (1cm and 1cm);
     \draw (0,1)  .. controls (0.4,1.5) and  (1.6,0.5) ..  (2,1);
      \node  at (0.5,0.9) () {$\Sheet_1$};
        \node  at (1.2,1.5) () {$\Dom^+_{\Sheet_1}$};
             \node  at (1.2,0.5) () {$\Dom^-_{\Sheet_1}$};
     \end{tikzpicture}
     };
      \node  at (3.5,0) () {
 \begin{tikzpicture}[thick, rotate = 45, scale = 1.2]
    \draw (1, 1) ellipse (1cm and 1cm);
     \draw (0,1)  .. controls (0.4,1.5) and  (1.6,0.5) ..  (2,1);
        \node  at (1.5,1.1) () {$\Sheet_2$};
                \node  at (0.8,1.5) () {$\Dom^+_{\Sheet_2}$};
             \node  at (0.8,0.5) () {$\Dom^-_{\Sheet_2}$};
     \end{tikzpicture}
     };
           \node  at (7,0) () {
 \begin{tikzpicture}[thick, rotate = 90, scale = 1.2]
    \draw (1, 1) ellipse (1cm and 1cm);
     \draw (0,1)  .. controls (0.4,1.5) and  (1.6,0.5) ..  (2,1);
          \node  at (0.5,0.9) () {$\Sheet_3$};
                  \node  at (1.2,1.5) () {$\Dom^+_{\Sheet_3}$};
             \node  at (1.2,0.5) () {$\Dom^-_{\Sheet_3}$};
     \end{tikzpicture}
     };
     \draw [dotted, ->] (0.5,0.5) .. controls (1.85, 1.2) .. (3.2,0.6);
          \draw [dotted, ->] (0.5,-0.5) .. controls (1.85, -1.2) .. (3.2,-0.6);
            \node  at (1.85,0.75) () {$\phi^+$};
                     \node  at (1.85,-0.75) () {$\phi^-$};
                          \draw [dotted, ->] (3.8,0.8) .. controls (5.35, 1.2) .. (6.7,0.6);
            \node  at (5.35,0.8) () {$\psi^+$};
                    \draw [dotted, ->] (4.1,-0.5) .. controls (5.35, -1.2) .. (7.4,-0.7);
                      \node  at (5.4,-0.8) () {$\psi^-$};
\end{tikzpicture}
}
\caption{Elements of the groupoid  $\DSDiff(M)$ and their composition rule.}\label{fig:composition}
\end{figure}

\smallskip

The corresponding Lie algebroid $\dsvect(M) \to \VS(M)$ 
is the space of possible velocities of a fluid with a vortex sheet, defined as follows.
Given a vortex sheet $\Sheet$, the corresponding velocities are discontinuous vector fields on $M$ 
of the form $u = \chiplus u^+ + \chimin u^-\!,$
where $\chiplus, \chimin$ are indicator functions of the connected components $\Dompm$ 
of $M\, \setminus \,\Sheet$, and $u^\pm$ are smooth divergence-free vector fields on $\Dompm$ which have the same normal component on $\Sheet$.
The map from such  vector fields $u$ to their normal components on $\Sheet$  is the anchor map $\#$
of the corresponding algebroid.
Via the general procedure described above one defines a right-invariant $L^2$-metric on this groupoid and constructs an analogue of the geodesic Euler-Arnold equation. 

\begin{theorem}[\cite{IK}]
The  Euler-Arnold equation  corresponding to the 
$L^2$-metric on the algebroid $\dsvect(M)$ coincides with the 
 the Euler equation  for a fluid flow discontinuous along a vortex sheet $\Sheet\subset M$:
\begin{align}\label{twoPhaseEuler}
\begin{cases}
\partial_t u^+ + \nabla_{u^+} u^+ = -\grad p^+\!, \\ 
\partial_t u^- + \nabla_{u^-} u^-  = -\grad p^-\!,\\
 \qquad \partial_t \Sheet = \# u\,,
\end{cases}
\end{align}
where $u = \chiplus u^+ + \chimin u^-$ is the fluid velocity, ${\rm div}\, u^\pm=0$, and 
$p^\pm \in \Cont^\infty(\Dompm)$ are functions satisfying the continuity condition
$p^+\vert_{\Sheet} = p^-\vert_{\Sheet}$.

Equivalently, Euler equations \eqref{twoPhaseEuler} are geodesic
equations for the right-invariant $L^2$-metric on (source fibers of) the Lie groupoid $\DSDiff(M)$ 
of discontinuous volume-preserving diffeomorphisms.
\end{theorem}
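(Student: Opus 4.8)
The plan is to specialize the general algebroid machinery of Section \ref{sec:gen} to the groupoid $\DSDiff(M) \rightrightarrows \VS(M)$ and its Lie algebroid $\dsvect(M) \to \VS(M)$, repeating the argument of Theorems \ref{thmMain}--\ref{thmMain2} \emph{mutatis mutandis}; a complete treatment is in \cite{IK}. First I would identify the smooth dual $\dsvect(M,\Sheet)^*$. A velocity $u = \chiplus u^+ + \chimin u^-$ pairs with a pair of $1$-forms $\alpha^\pm$ on $\Dompm$ via $\sum_\pm \int_{\Dompm} \alpha^\pm(u^\pm)\,\vol$, and --- exactly as in Proposition \ref{prop:nnp} --- the kernel of this pairing consists of the pairs obtained by restricting $df$ to $\Domplus$ and $\Dommin$ for a single \emph{globally} smooth (hence continuous across $\Sheet$) function $f \in \Cont^\infty(M)$. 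This is the decisive structural point: the quotient is by differentials of functions that are smooth on all of $M$, not by pairs of differentials of independent functions on the two sides, and it is this that later forces the pressure continuity $p^+\vert_\Sheet = p^-\vert_\Sheet$. I would also record, as in Lemma \ref{lemma:vstanspace}, that $\T_\Sheet\VS(M)$ consists of normal fields along $\Sheet$ preserving the confined volume, and that the anchor $\#u$ is the common normal component of $u^\pm$ on $\Sheet$.

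Next I would treat the inertia operator $\I \colon \dsvect(M) \to \dsvect(M)^*$ of the $L^2$-metric $\langle \bu, \bv\rangle_{L^2} = \sum_\pm \int_{\Dompm} (u^\pm, v^\pm)\,\vol$, the analogue of the invertibility statement preceding Theorem \ref{thmMain}: one has $\I(\bu) = [\bu^\flat]$, and invertibility amounts to showing every coset of pairs of $1$-forms has a unique representative $\alpha^\pm$ whose sharps are divergence-free on $\Dompm$ and share a normal component on $\Sheet$. This is a transmission (Poisson) problem --- solve $\Delta f = -d^*\alpha$ on each side, with $f$ and the normal derivative of $f$ plus the relevant form matched across $\Sheet$ --- and it has a solution unique up to an additive constant. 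With $\I$ in hand the Euler-Arnold Hamiltonian is $\H([\balpha],\Sheet) = \tfrac12 \sum_\pm \int_{\Dompm} (\alpha^\pm, \alpha^\pm)\,\vol$ evaluated on that distinguished representative.

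Then I would compute $d\H$ and apply the Hamiltonian operator. The bracket computation of Theorem \ref{thm:bracket} carries over, so the Hamiltonian operator has the form \eqref{HamOperator}, namely $(\bu,[\bF]) \mapsto (-[i_\bu d\balpha] - [d\bF],\, \#\bu)$, while $d^F\H = \bu := \balpha^\sharp$ and $d^B\H = \tfrac12[(\balpha,\balpha)]$ as in the proof of Theorem \ref{thmMain}. Hence the Euler-Arnold equation in coset form reads $\partial_t[\balpha] + [i_\bu d\balpha + \tfrac12 d\,i_\bu\balpha] = 0$ together with $\partial_t\Sheet = \#\bu$. Choosing on each side the distinguished representative and taking metric duals through $(\L_u u^\flat - \tfrac12 d(u,u))^\sharp = \nabla_u u$, exactly as in the proof of Theorem \ref{thmMain2}, the first equation becomes $\partial_t u^\pm + \nabla_{u^\pm} u^\pm = \nabla f$ for one $f \in \Cont^\infty(M)$; with $p := -f$ this is \eqref{twoPhaseEuler}, the conditions $\div u^\pm = 0$ being membership in the fiber and $p^+\vert_\Sheet = p^-\vert_\Sheet$ being the continuity of $f$ noted above. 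The geodesic reformulation on source fibers of $\DSDiff(M)$ is then Remark \ref{rem:idofmetrics}.

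The main obstacle is analytic rather than formal. One must check that, despite the jump of the fields across the moving hypersurface $\Sheet$, the smooth dual genuinely separates points and the general Poisson formula \eqref{PoissonExplicit} applies, so that the integrations by parts yielding the bracket \eqref{pbf} go through with the boundary contributions along $\Sheet$ cancelling --- which they do precisely because admissible velocities share a normal component there. The closely related subtlety is the well-posedness (existence, uniqueness, and smooth dependence on $\Sheet$) of the transmission problem above, which is what makes $\I$ a genuine bundle isomorphism over $\VS(M)$; this is exactly the point where the vortex-sheet case differs from the smooth multiphase one, and it is carried out in \cite{IK}.
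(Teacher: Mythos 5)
Your overall route --- specializing the paper's algebroid machinery (smooth dual as in Proposition \ref{prop:nnp}, inertia operator, Hamiltonian operator \eqref{HamOperator}, then the computations of Theorems \ref{thmMain}--\ref{thmMain2}) to $\dsvect(M)\to\VS(M)$ --- is exactly the intended one; the paper itself does not reprove this theorem but defers to \cite{IK}, where precisely this specialization is carried out. There is, however, one genuine error, and it sits at the point you yourself call decisive. The kernel of the pairing $\sum_\pm\int_{\Dompm}\alpha^\pm(u^\pm)\,\vol$ is \emph{not} the set of pairs $\bigl(df\vert_{\Domplus},df\vert_{\Dommin}\bigr)$ with a single globally smooth $f\in\Cont^\infty(M)$. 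Integrating by parts, for $u=\chiplus u^+ + \chimin u^-$ with $\div u^\pm=0$ and matching normal components one gets $\sum_\pm\int_{\Dompm}df^\pm(u^\pm)\,\vol=\int_\Sheet (f^+-f^-)\,i_u\vol$, and since the normal component of $u$ on $\Sheet$ ranges over all mean-zero functions, the kernel consists of all pairs $(df^+,df^-)$ with $f^\pm$ smooth up to $\Sheet$ on each side and $f^+-f^-$ constant on $\Sheet$, i.e. of differentials of functions that are merely \emph{continuous} across the sheet, not smooth across it.

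This is not cosmetic. With your smaller kernel the coset equation would produce $\partial_t u^\pm+\nabla_{u^\pm}u^\pm=\nabla f$ with one globally smooth $f$, forcing the pressure to be smooth across $\Sheet$ --- strictly stronger than the condition $p^+\vert_\Sheet=p^-\vert_\Sheet$ in the theorem, and false for genuine vortex-sheet flows, since the normal derivatives of $p^\pm$ on $\Sheet$ are determined separately by the two Euler equations and need not agree. It would also undercut your own inertia-operator step: demanding one globally smooth $f$ satisfy $\Delta f=-d^*\alpha^+$ in $\Domplus$ and $\Delta f=-d^*\alpha^-$ in $\Dommin$ together with the normal-matching condition is overdetermined, whereas the transmission problem you actually describe (two functions $f^\pm$, coupled only through their boundary values and the normal components on $\Sheet$) is well posed exactly because the quotient is by the larger space of continuous-across-$\Sheet$ differentials; as written, your representative would not even lie in the coset class you defined. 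Replace the kernel by pairs $(df^+,df^-)$ with $f^+\vert_\Sheet=f^-\vert_\Sheet$ (up to an additive constant); with that correction the rest of your argument, including the cancellation of the boundary terms along $\Sheet$ in the bracket computation, goes through as in \cite{IK} and yields precisely the stated pressure-continuity condition.
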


The above consideration also defines a metric on the space $\VS(M)$ of vortex sheets, while the 
target map is a Riemannian submersion of the $L^2$-metric on the groupoid of discontinuous diffeomorphisms to the metric on $\VS(M)$,
see \cite{Loesch, IK}.

 {
\begin{remark}
The multiphase groupoid studied in Section \ref{sect:diffeo_pair}
can be regarded as a relaxed version of the vortex sheet groupoid as follows.
Given a hypersurface $\Sheet\subset M$ we define the multiphase density 
$ \bmu:=(\mu_+,\mu_-)$ as a pair of indicator densities $\mu_\pm=\chi_\pm\vol$ for 
indicator functions  $\chi_\pm$  of the connected components $\Dompm$ 
of $M\, \setminus \,\Sheet$, and therefore satisfying the condition $\mu_+ + \mu_-=\vol$ on $M$.
Now, for a groupoid element $ (\bphi\,;\bmu, \bmu'):=(\phi_+, \phi_-;\mu_+,\mu_-, \mu_+',\mu_-')$
 the pair of diffeomorphisms $\phi_\pm$ on $M$ satisfying ${\phi_\pm}_*\mu_\pm=\mu_\pm'$ for 
indicator densities $\mu_\pm'$ representing the connected components $D^\pm_{\Sheet'}$ of $M\, \setminus \,\Sheet'$
 boils down to a pair of $\vol$-preserving diffeomorphisms sending, respectively, $\Dompm$ to $D^\pm_{\Sheet'}$, i.e.
 $\Sheet$ to $\Sheet'$ while preserving the volume form $\vol$ on $M$.
 
 One can see that the definitions of the corresponding algebroids, their brackets and anchor maps, as well as the corresponding Poisson structures and Hamiltonian equations are consistent with taking this relaxed version and lead
 to the relation between the Euler equations~\eqref{eq:hvse} and~\eqref{twoPhaseEuler}. It would be 
 interesting to formally establish the convergence for the relaxed solutions to the classical solutions with vortex sheets,
 cf. Section \ref{sec:open}.
 \end{remark}
}


\bibliographystyle{plain}
\bibliography{vs}

\end{document}